\newcommand{\+}{\nobreakdash-}
\renewcommand{\:}{\colon}
\renewcommand{\;}{,\medspace}
\renewcommand{\.}{\text{$\mskip .5\thinmuskip$}}
\newcommand{\rarrow}{\longrightarrow}
\newcommand{\ot}{\otimes}
\newcommand{\os}{\lozenge}
\DeclareMathOperator{\Hom}{Hom}
\DeclareMathOperator{\Ext}{Ext}
\DeclareMathOperator{\Tor}{Tor}
\newcommand{\Hot}{\mathsf{Hot}}
\newcommand{\Ab}{\mathsf{Ab}}
\newcommand{\fX}{{\mathfrak X}}
\newcommand{\bfY}{{\boldsymbol{\mathfrak Y}}}
\newcommand{\boR}{{\mathbb R}}
\newcommand{\boL}{{\mathbb L}}
\newcommand{\boZ}{{\mathbb Z}}
\newcommand{\C}{{\mathcal C}}
\newcommand{\D}{{\mathcal D}}
\newcommand{\bS}{{\boldsymbol{\mathcal S}}}
\newcommand{\bT}{{\boldsymbol{\mathcal T}}}
\newcommand{\sD}{{\mathsf D}}
\newcommand{\sE}{{\mathsf E}}
\newcommand{\sJ}{{\mathsf J}}
\newcommand{\oc}{\mathbin{\text{\smaller$\square$}}}
\newcommand{\bu}{{\text{\smaller\smaller$\scriptstyle\bullet$}}}
\newcommand{\lrarrow}{\.\relbar\joinrel\relbar\joinrel\rightarrow\.}
\DeclareMathOperator{\qTor}{\mathcal T \mskip-.5\thinmuskip
  \text{\rmfamily\mdseries\fontshape{ui}\selectfont or}}
\DeclareMathOperator{\qHom}{\mathcal H \mskip-.3\thinmuskip
  \text{\rmfamily\mdseries\fontshape{ui}\selectfont om}}
\newcommand{\modl}{{\operatorname{\mathsf{--mod}}}}
\newcommand{\modr}{{\operatorname{\mathsf{mod--}}}}
\newcommand{\bimod}{{\operatorname{\mathsf{--mod--}}}}
\newcommand{\modrfp}{{\operatorname{\mathsf{mod_{fp}--}}}}
\newcommand{\fp}{{\mathsf{fp}}}
\newcommand{\fpi}{{\mathsf{fpi}}}
\newcommand{\fpp}{{\mathsf{fpp}}}
\newcommand{\inj}{{\mathsf{inj}}}
\newcommand{\proj}{{\mathsf{proj}}}
\newcommand{\fl}{{\mathsf{fl}}}
\renewcommand{\b}{{\mathsf{b}}}
\newcommand{\co}{{\mathsf{co}}}
\newcommand{\ctr}{{\mathsf{ctr}}}
\newcommand{\sico}{{\mathsf{sico}}}
\newcommand{\sictr}{{\mathsf{sictr}}}
\newcommand{\abs}{{\mathsf{abs}}}
\newcommand{\si}{{\mathsf{si}}}
\newcommand{\dinj}{{\operatorname{\mathsf{--inj}}}}
\newcommand{\dfpi}{{\operatorname{\mathsf{--fpi}}}}
\newcommand{\dproj}{{\operatorname{\mathsf{--proj}}}}
\newcommand{\dfl}{{\operatorname{\mathsf{--fl}}}}
\newcommand{\rop}{{\mathrm{op}}}
\newcommand{\sop}{{\mathsf{op}}}
\newcommand{\Section}[1]{\bigskip\section{#1}\medskip}
\theoremstyle{plain}
\newtheorem{thm}{Theorem}[section]
\newtheorem{lem}[thm]{Lemma}
\newtheorem{prop}[thm]{Proposition}
\newtheorem{cor}[thm]{Corollary}
\theoremstyle{definition}
\newtheorem{ex}[thm]{Example}
\begin{document}

\title{Coherent rings, fp-injective modules, \\ dualizing complexes, and
covariant \\ Serre--Grothendieck duality}

\author{Leonid Positselski}

\address{Department of Mathematics, Faculty of Natural Sciences,
University of Haifa, Mount Carmel, Haifa 31905, Israel; and
\newline\indent Laboratory of Algebraic Geometry, National Research
University Higher School of Economics, Moscow 117312; and
\newline\indent Sector of Algebra and Number Theory, Institute for
Information Transmission Problems, Moscow 127051, Russia}

\email{posic@mccme.ru}

\begin{abstract}
 For a left coherent ring $A$ with every left ideal having
a countable set of generators, we show that the coderived category of
left $A$\+modules is compactly generated by the bounded derived
category of finitely presented left $A$\+modules (reproducing
a particular case of a recent result of \v St'ov\'\i\v cek
with our methods).
 Furthermore, we present the definition of a dualizing complex of
fp\+injective modules over a pair of noncommutative coherent rings $A$
and $B$, and construct an equivalence between the coderived category
of $A$\+modules and the contraderived category of $B$\+modules.
 Finally, we define the notion of a relative dualizing complex of
bimodules for a pair of noncommutative ring homomorphisms $A\rarrow R$
and $B\rarrow S$, and obtain an equivalence between
the $R/A$\+semicoderived category of $R$\+modules and
the $S/B$\+semicontraderived category of $S$\+modules.
 For a homomorphism of commutative rings $A\rarrow R$, we also
construct a tensor structure on the $R/A$\+semicoderived category
of $R$\+modules.
 A vision of semi-infinite algebraic geometry is discussed in
the introduction.
\end{abstract}

\maketitle

\tableofcontents

\section*{Introduction}
\medskip

\subsection{{}}
 The philosophy of semi-infinite homological algebra, as elaborated
in the book~\cite{Psemi}, tells that semi-infinite homology and
cohomology theories are naturally assigned to mathematical objects
``of semi-infinite nature'', meaning objects that can be viewed as
extending in both a ``positive'' and a ``negative'' direction with
some ``zero position'' in between, perhaps defined up to
a finite movement.
 In application to algebraic geometry, one thinks of
a ``semi-infinite algebraic variety'' as an ind-pro-algebraic
variety or an ind-scheme of ind-infinite type, with the scheme
or pro-variety variables forming the ``negative direction'' and
the ind-variety variables belonging to the ``positive'' one.
 Thus the simplest example of a semi-infinite algebraic variety is
the affine/vector space of formal Laurent power series $k((z))$
over a ground field~$k$, and many more geometrically complicated
examples are supposed to be constructed using the field structure
of the Laurent power series.

 More specifically, experience seems to suggest that the ``positive''
variables have to be ``grouped together'' in some sense, forming
a well-defined ``positive subalgebra'' object in the ``semi-infinite'' 
algebra of functions or operators, like the subalgebra $zk[[z]]d/dz$
in the Lie algebra $k((z))d/dz$ of vector fields on the formal circle.
 In the context of algebraic geometry, this points to a morphism of
ind-schemes or ind-stacks $\pi\:\bfY\rarrow\fX$ with, approximately,
the following properties:
\begin{enumerate}
\renewcommand{\theenumi}{\Roman{enumi}}
 \item $\bfY$ is a large and complicated ind-scheme or ind-stack;
 \item $\fX$ is built up in a complicated way from affine schemes of
rather small size: something like an ind-Noetherian ind-scheme or
an ind-Noetherian ind-stack with a dualizing complex;
 \item the morphism $\bfY\rarrow\fX$ is locally well-behaved: one
would probably want it to be at least flat, or perhaps ``very
flat'' in the sense of~\cite[Section~1.7]{Pcosh};
 \item the \emph{fibers} of the morphism $\bfY\rarrow\fX$ are built up
in a simple way from large affine pieces: so they might be arbibrary
affine schemes, or quasi-compact semi-separated schemes, or perhaps
some kind of ``weakly proregular formal schemes'' in
the sense of~\cite{PSY,Pmgm}.
\end{enumerate}
 For example, the surjective linear map of topological vector spaces
$k((z))\rarrow k((z))/k[[z]]$ can be viewed naturally as a morphism
of ind-schemes satisfying the conditions~(I--IV).
 The discrete quotient space $k((z))/k[[z]]$ is the set of
$k$\+points of an ind-scheme of ind-finite type over~$k$, while
the fibers, isomorphic to $k[[z]]$, are the sets of $k$\+points
of affine schemes of infinite type.

\subsection{{}}
 In the algebraic formalism of~\cite{Psemi}, the main starting object
is a \emph{semialgebra} $\bS$, that is an associative algebraic
structure ``mixing algebra and coalgebra variables''.
 The ``positively indexed'' variables form a coalgebra~$\C$;
the semialgebra $\bS$ is an algebra object in the category of
bicomodules over~$\C$.
 The key structures in the categorical formalism are
the \emph{semiderived categories} of semimodules and semicontramodules
over~$\C$; these are mixtures of the co/contraderived categories
``in the direction of~$\C$'' and the conventional derived categories
``in the direction of $\bS$ relative to~$\C$''.

 In the geometric situation described above, the purpose of having
a morphism of ind-schemes or ind-stacks $\bfY\rarrow\fX$ is to
consider the semiderived category of quasi-coherent torsion
sheaves or contraherent cosheaves of contramodules on $\bfY$
\emph{relative to $\fX$}, which means ``the co- or contraderived
category along $\fX$ and the conventional derived category along
the fibers''.
 The ``semi-infinite algebraic geometry'' formalism would then feature
a ``geometric semimodule-semicontramodule correspondence'', i.~e.,
a triangulated equivalence between the two semiderived (or, if one
wishes, the \emph{semicoderived} and the \emph{semicontraderived})
categories.

 In addition, one expects to have a ``semi-infinite quasi-coherent
$\qTor$ functor'', or the double-sided derived functor of
\emph{semitensor product} of quasi-coherent torsion sheaves on~$\bfY$.
 This means a mixture of the \emph{cotensor
product}~\cite[Section~B.2.5]{Psing} of quasi-coherent torsion sheaves
along the ind-scheme/ind-stack $\fX$ with its dualizing complex and
the conventional tensor product of quasi-coherent sheaves along
the fibers.
 The derived semitensor product functor should provide a tensor
structure on the semiderived category of quasi-coherent torsion sheaves,
and the pull-back of the dualizing complex $\pi^*\D_\fX^\bu$ should be
the unit object of this tensor structure.
 One would also expect to have a double-sided derived functor of
\emph{semihomomorphisms} from quasi-coherent torsion sheaves to
contraherent cosheaves of contramodules on $\bfY$, transformed by
the derived semico-semicontra correspondence into the conventional
right derived quasi-coherent internal $\qHom$.

\subsection{{}}
 The aim of the present paper is to work out a couple of small
pieces in the above big picture.
 First of all, we attempt to show that the Noetherianness condition
in~(II) can be weakened to the coherence condition.
 The definition of a dualizing complex over a commutative coherent
ring, or a pair of noncommutative ones, is elaborated for this purpose.
 On a more technical level, we demonstrate the usefulness of
the notion of an fp\+injective module over a coherent ring.
 Secondly, we introduce the definition of a relative dualizing complex
and obtain an equivalence between the semicoderived and
the semicontraderived categories of modules in the simplest geometric
situation of a morphism of affine schemes $\bfY\rarrow\fX$.
 In addition, we construct the derived semitensor product functor in
this situation, defining a tensor structure on the semiderived category
of modules.

 Notice that the case a quasi-compact semi-separated scheme $\bfY$
over a point $\fX=*$ has been already considered
in~\cite[Section~4.6]{Pcosh} and the case of a Noetherian scheme
$\bfY$ over $\fX=*$, in~\cite[Theorem~5.8.1]{Pcosh}.
 The case of a weakly proregular (e.~g., Noetherian) affine
formal scheme $\bfY$ over a point $\fX=*$ is clarified in
the paper~\cite{Pmgm}.
 The situation of a (semi-separated or non-semi-separated)
Noetherian scheme $\bfY=\fX$ with a dualizing complex has been
considered in~\cite[Section~5.7 and Theorem~5.8.2]{Pcosh}.
 The case of a semi-separated Noetherian stack $\bfY=\fX$ with
a dualizing complex has been worked out in~\cite[Section~B.4]{Pcosh},
and the case of an ind-affine ind-Noetherian ind-scheme $\bfY=\fX$
with a dualizing complex, in~\cite[Section~D.2]{Pcosh}.
 (The reader can find an overview of these results in
the recent presentation~\cite{Psli2}.)
 The present paper adds an item or two to this list.

\medskip

\textbf{Acknowledgements.}
 The mathematical content of this paper was conceived in Moscow and
subsequently worked out in Haifa and in Brno.
 I am grateful to Henning Krause who told me about fp\+injective
modules during a workshop in Moscow in September~2011.
 I would like to thank Jan \v St'ov\'\i\v cek for sending me
his preprint~\cite{Sto} and Amnon Yekutieli for helpful discussions.
 The author was supported in part by RFBR grants in Moscow,
by a fellowship from the Lady Davis Foundation at the Technion,
and by the Grant agency of the Czech Republic under the grant
P201/12/G028 at Masaryk University in Brno.
 The author's research is supported by the Israel Science Foundation
grant~\#\,446/15 at the University of Haifa.

\Section{Fp-Injective and Fp-Projective Modules}

 Hereditary complete cotorsion theories in abelian and exact
categories~\cite{ET,Hov,Gil}, \cite[Section~1.1]{Bec}, starting with
the flat cotorsion theory in the category of modules over
an associative ring and the very flat cotorsion theory in the category
of modules over a commutative ring, and continuing with numerous
others, play an important role in the theory of contraherent
cosheaves~\cite{Pcosh}.

 The theory of fp\+injective and fp\+projective modules is one of
the classical examples of complete cotorsion
theories~\cite[Definition~3.3 and Theorem~3.4(2)]{Trl}, \cite{MD}.
 From our point of view, its importance in the study of modules over
coherent rings lies in the fact that the class of fp\+injective modules,
while often not differing very much homologically from the narrower
class of injective ones, is at the same time closed under infinite
direct sums, and in fact, even under filtered inductive
limits~\cite{Sten}.
 Thus the use of fp\+injective modules allows to work with many coherent
rings in the ways otherwise applicable to Noetherian rings only.

 This section contains preliminary material, and the proofs are sketchy.
 Filling in the details is left to the reader.

 Given an associative ring $A$, we denote by $A\modl$ the abelian
category of left $A$\+modules and by $\modr A$ the abelian category
of right $A$\+modules.
 A left $A$\+module $M$ is said to be \emph{finitely presented} if it
can be presented as the cokernel of a morphism of finitely generated
free left $A$\+modules.
 Clearly, the cokernel of a morphism from a finitely generated
left $A$\+module to a finitely presented one is finitely presented;
an extension of finitely presented left $A$\+modules is finitely
presented.

\begin{lem}  \label{fin-gen-pres-kernel}
 The kernel of a surjective morphism from a finitely generated module
to a finitely presented one is finitely generated. \qed
\end{lem}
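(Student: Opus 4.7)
The plan is to exhibit $\ker f$ as an extension of two visibly finitely generated modules, using a finite presentation of the target together with projectivity of finitely generated free modules. Write $f\: M \rarrow N$ for the given surjection. By the definition of finite presentation recalled just above, one may choose a short exact sequence $0 \rarrow K \rarrow F \rarrow N \rarrow 0$ in which $F$ is finitely generated free and $K$ is finitely generated (as the image of a morphism out of a finitely generated free module). Since $F$ is projective and $f$ is surjective, I would lift the projection $\pi\: F \rarrow N$ to a morphism $g\: F \rarrow M$ with $f\circ g = \pi$. For every $k \in K$ one then has $f(g(k)) = \pi(k) = 0$, so $g$ carries $K$ into $\ker f$.

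Next, I would analyze the composite
$$\ker f \lhook\joinrel\rarrow M \relbar\joinrel\twoheadrightarrow M/g(F).$$
Its kernel is $\ker f \cap g(F)$, which equals $g(K)$: if $g(x) \in \ker f$ then $\pi(x) = f(g(x)) = 0$, so $x \in K$. Its image is all of $M/g(F)$: given any $m \in M$, surjectivity of $\pi$ provides $x \in F$ with $\pi(x) = -f(m)$, and then $m + g(x) \in \ker f$ represents the class $m + g(F)$. Consequently there is a short exact sequence
$$0 \rarrow g(K) \rarrow \ker f \rarrow M/g(F) \rarrow 0.$$

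Finally, $g(K)$ is a quotient of the finitely generated module $K$, and $M/g(F)$ is a quotient of the finitely generated module $M$, so both outer terms are finitely generated; an extension of two finitely generated modules is finitely generated, so $\ker f$ is finitely generated. The only piece of the argument that requires any care is the identification of the kernel and image of the composite above; everything else uses only the projectivity of a finitely generated free module and the basic stability properties of finitely generated modules recorded just before the lemma statement.
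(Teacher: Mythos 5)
Your proof is correct, and it is the standard argument for this fact; the paper marks the lemma with $\qed$ and gives no proof of its own, so there is nothing to compare against. Your construction of the short exact sequence $0 \to g(K) \to \ker f \to M/g(F) \to 0$ via a projective lift of a finite presentation of $N$ is exactly the usual snake-lemma-flavored reasoning, and all three verifications (the identification of the kernel of the composite, the surjectivity onto $M/g(F)$, and the closure of finitely generated modules under extensions) are carried out correctly.
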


 A ring $A$ is called \emph{left coherent} if any finitely generated
submodule of a finitely presented left $A$\+module is finitely
presented, or equivalently, if any finitely generated left ideal
in $A$ is finitely presented as a left $A$\+module.
 Whenever $A$ is a left coherent ring, the full subcategory $A\modl_\fp$
of finitely presented left $A$\+modules is closed under the kernels,
cokernels, and extensions in $A\modl$; so $A\modl_\fp$ is an abelian
category and its embedding $A\modl_\fp\rarrow A\modl$ is
an exact functor.

\begin{lem} \label{fin-gen-proj-complex}
 Let $A$ be a left coherent ring, and let $C^\bu$ be a bounded above
complex of left $A$\+modules whose cohomology modules $H^n(C^\bu)$
are finitely presented over~$A$.
 Then there exists a bounded above complex of finitely generated free
left $A$\+modules $F^\bu$ together with a quasi-isomorphism of 
complexes of $A$\+modules $F^\bu\rarrow C^\bu$. \qed
\end{lem}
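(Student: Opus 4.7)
The plan is to construct $F^\bu$ by descending induction on the degree. Fix $N$ with $C^n = 0$ for $n > N$, and set $F^n = 0$ for $n > N$. The inductive hypothesis, at degree $n$, is that finitely generated free left $A$\+modules $F^n, F^{n+1}, \dots, F^N$ and a morphism of truncated complexes $f \: F^{\ge n} \rarrow C^{\ge n}$ have already been constructed in such a way that the induced map $H^k(F^\bu) \rarrow H^k(C^\bu)$ is an isomorphism for $k > n$ and is surjective for $k = n$. The base case $n = N$ is handled by lifting a finite generating set of the finitely presented module $H^N(C^\bu) = C^N / d(C^{N-1})$ to elements $c_1, \dots, c_s \in C^N$ and letting $F^N$ be the free module they span, with $f^N$ the evident map.

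For the passage from $n$ to $n-1$, I would let $F^{n-1}$ be free on two kinds of generators. First, generators $e_1, \dots, e_r$ intended to kill the kernel $K := \ker\bigl(H^n(F^\bu) \rarrow H^n(C^\bu)\bigr)$: pick cycles $x_1, \dots, x_r \in F^n$ whose classes generate $K$, pick $c_i \in C^{n-1}$ with $d(c_i) = f(x_i)$ (possible by the definition of $K$), and set $d(e_i) = x_i$, $f(e_i) = c_i$. Second, generators $e'_1, \dots, e'_s$ with zero differential designed to surject onto $H^{n-1}(C^\bu)$: pick cycles $y_j \in C^{n-1}$ lifting a finite generating set of the finitely presented module $H^{n-1}(C^\bu)$, and set $d(e'_j) = 0$, $f(e'_j) = y_j$. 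A direct verification shows that after this enlargement the map $H^n(F^\bu) \rarrow H^n(C^\bu)$ acquires the missing injectivity (becoming an isomorphism) while $H^{n-1}(F^\bu) \rarrow H^{n-1}(C^\bu)$ is surjective, so the induction advances.

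The only point requiring care is the finite generation of $K$, ensuring that finitely many $e_i$ suffice. At the moment the inductive step is performed, $F^{n-1}$ has not yet been defined, so $H^n(F^\bu)$ equals $\ker(d \: F^n \rarrow F^{n+1})$, which is a finitely generated submodule of the finitely presented module $F^n$; by left coherence of $A$, this kernel is itself finitely presented. Then $K$ is the kernel of a surjection from a finitely generated module onto the finitely presented module $H^n(C^\bu)$, and Lemma~\ref{fin-gen-pres-kernel} yields that $K$ is finitely generated. This is the sole place where coherence of $A$ enters, and it is the crux of the argument; the rest of the construction is purely formal.
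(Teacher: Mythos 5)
Your argument is the standard descending-induction construction of a free resolution of a bounded-above complex, and it is correct; this is precisely what the paper leaves to the reader, and you have correctly located the single place where coherence enters. The only point you state without justification is that $\ker(d\:F^n\to F^{n+1})$ is finitely generated, which follows from the closure of $A\modl_\fp$ under kernels over a left coherent ring recalled just before the Lemma (or directly: $d(F^n)\subseteq F^{n+1}$ is a finitely generated submodule of a finitely presented module, hence finitely presented by coherence, and Lemma~\ref{fin-gen-pres-kernel} applied to the surjection $F^n\twoheadrightarrow d(F^n)$ gives finite generation of the kernel).
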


\begin{cor} \label{derived-fp-fully-faithful}
 For any left coherent ring $A$, the triangulated functors between
the derived categories of bounded and bounded above complexes\/
$\sD^\b(A\modl_\fp)\rarrow\sD^\b(A\modl)$ and\/ $\sD^-(A\modl_\fp)\rarrow
\sD^-(A\modl)$ induced by the embedding of abelian categories
$A\modl_\fp\rarrow A\modl$ are fully faithful.
 Their essential images coincide with the full subcategories\/
$\sD^\b_\fp(A\modl)$ and\/ $\sD^-_\fp(A\modl)$ of complexes with
finitely presented cohomology modules in\/ $\sD^\b(A\modl)$ and\/
$\sD^-(A\modl)$. \qed
\end{cor}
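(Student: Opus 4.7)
The plan is to use Lemma~\ref{fin-gen-proj-complex} as the principal tool, together with the classical fact that bounded above complexes of projective modules are K-projective and hence compute Hom-sets in the derived category via the homotopy category. Finitely generated free left $A$-modules are projective objects both in $A\modl$ and in the abelian subcategory $A\modl_\fp$ (the latter because the embedding $A\modl_\fp\rarrow A\modl$ is fully faithful and exact), so a bounded above complex of such modules is K-projective on either side: any acyclic bounded above complex of projectives is contractible.

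For fully faithfulness, given $M^\bu\in\sD^-(A\modl_\fp)$, apply Lemma~\ref{fin-gen-proj-complex} to replace $M^\bu$, up to quasi-isomorphism in $A\modl_\fp$, by a bounded above complex $F^\bu$ of finitely generated free $A$-modules. For any $N^\bu$ whose terms lie in $A\modl_\fp$ we then have
\[
\Hom_{\sD^-(A\modl_\fp)}(F^\bu,N^\bu)=\Hom_{\sJ}(F^\bu,N^\bu),
\]
where $\sJ$ may be either $K^-(A\modl_\fp)$ or $K^-(A\modl)$; the two candidates for $\sJ$ give the same Hom-set, since a chain map or chain homotopy between two complexes of objects of $A\modl_\fp$ is the same datum whether computed in $A\modl_\fp$ or in $A\modl$. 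Thus the natural map on Hom-sets agrees with the one obtained in $\sD^-(A\modl)$, and full faithfulness follows.

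For the description of the essential image in $\sD^-$, a direct application of Lemma~\ref{fin-gen-proj-complex} to any $C^\bu\in\sD^-_\fp(A\modl)$ produces a quasi-isomorphic bounded above complex $F^\bu$ of finitely generated free modules, which lies in $\sD^-(A\modl_\fp)$. The bounded case is subtler: if $[a,b]$ bounds the cohomology of $C^\bu$, form the two-sided canonical truncation $\tau_{\le b}\tau_{\ge a}F^\bu$, whose extreme terms are $\operatorname{coker}(F^{a-1}\rarrow F^a)$ in degree~$a$ and $\ker(F^b\rarrow F^{b+1})$ in degree~$b$. The cokernel is manifestly finitely presented. For the kernel, left coherence forces the image of $F^b\rarrow F^{b+1}$ to be finitely presented (it is a finitely generated submodule of the finitely presented $F^{b+1}$); Lemma~\ref{fin-gen-pres-kernel} then makes $\ker d^b$ finitely generated, and coherence promotes it to finitely presented. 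This truncation step—where one must combine coherence with Lemma~\ref{fin-gen-pres-kernel} to bound the complex from above while staying inside $A\modl_\fp$—is the only genuinely delicate part of the argument; everything else is formal from Lemma~\ref{fin-gen-proj-complex} and K-projectivity.
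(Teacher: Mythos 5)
Your proof is correct and follows essentially the route the paper intends — the terse \qed after the statement signals that the author regards it as a routine consequence of Lemma~\ref{fin-gen-proj-complex}, via the standard facts that bounded above complexes of projectives compute Hom in the derived category and that the Hom-sets and homotopies between complexes of finitely presented modules are the same in $A\modl_\fp$ as in $A\modl$. One small observation: the upper truncation $\tau_{\le b}$, and with it the part you flag as ``genuinely delicate'' (using coherence and Lemma~\ref{fin-gen-pres-kernel} to show $\ker d^b$ is finitely presented), is actually avoidable. Lemma~\ref{fin-gen-proj-complex} already produces a \emph{bounded above} complex $F^\bu$ of finitely generated free modules; applying just $\tau_{\ge a}$ yields a \emph{bounded} complex whose terms in degrees $>a$ are finitely generated free and whose term in degree $a$ is the cokernel $\operatorname{coker}(F^{a-1}\rarrow F^a)$, manifestly finitely presented. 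That already lies in $\sD^\b(A\modl_\fp)$, so a single one-sided truncation suffices. Your kernel argument is nonetheless valid, and it does establish the stronger fact that $\sD^\b(A\modl_\fp)\subset\sD^\b(A\modl)$ is closed under canonical truncation in both directions, which may be worth knowing.
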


 Let $A$ be a left coherent ring.
 A left $A$\+module $J$ is said to be \emph{fp\+injective}~\cite{Sten}
if the functor $\Hom_A({-},J)$ takes short exact sequences of finitely
presented left $A$\+modules to short exact sequences of abelian
groups, or equivalently, if $\Ext_A^1(M,J)=0$ for any finitely
presented left $A$\+module $M$, or if $\Ext_A^i(M,J)=0$ for all
finitely presented $M$ and all $i>0$.
 All injective modules are fp\+injective.
 The class of fp\+injective left modules over left coherent ring $A$
is closed under extensions, cokernels of injective morphisms, infinite
direct sums and products, and filtered inductive limits.
 So, in particular, the full subcategory $A\modl_\fpi$ of
fp\+injective left $A$\+modules inherits the exact category structure
of the abelian category $A\modl$.

 The next definition and the related assertions, including the rest of
this section and also Lemma~\ref{coderived-hom-computing}(b) below, are
never really used in the proofs of the main results of this paper.
 They are presented here for the sake of completeness of the exposition,
and in the belief that the related techniques will find their uses in
the future development of semi-infinite algebraic geometry.

 A left $A$\+module $P$ is said to be
\emph{fp\+projective}~\cite{Trl,MD} if the functor $\Hom_A(P,{-})$
takes short exact sequences of fp\+injective left $A$\+modules to
short exact sequences of abelian groups, or equivalently, if
$\Ext_A^1(P,J)=0$ for any fp\+injective left $A$\+module $J$, or
if $\Ext_A^i(P,J)=0$ for all fp\+injective $J$ and all $i>0$.
 All projective modules and all finitely presented modules are
fp\+projective.
 The class of fp\+projective left modules over a left coherent ring $A$
is closed under extensions, kernels of surjective morphisms, and
infinite direct sums.
 So the full subcategory $A\modl_\fpp$ of fp\+projective left
$A$\+modules inherits the exact category structure of the abelian
category $A\modl$.

 Moreover, the class of fp\+projective left $A$\+modules is closed
under \emph{transfinitely iterated extensions} in the following
sense (``of inductive limit'').
 A left $A$\+module $P$ is said to be a transfinitely iterated
extension of left $A$\+modules $M_\alpha$ if there exist
a well-ordering of the set of indices $\{\alpha\}$ and an increasing
filtration $F_\alpha P$ of the $A$\+module $P$ by its $A$\+submodules
such that one has $\bigcup_\alpha F_\alpha P=P$ and for every
index~$\alpha$ the quotient module $F_\alpha P/\bigcup_{\beta<\alpha}
F_\beta P$ is isomorphic to $M_\alpha$.
 The following result~\cite{ET,Trl} tells that there are ``enough''
fp\+injective and fp\+projective left $A$\+modules.

\begin{thm}  \label{fp-complete-cotorsion-theory}
\textup{(a)} Any left $A$\+module can be embedded into
an fp\+injective left $A$\+module in such a way that
the quotient module is fp\+projective. \par
\textup{(b)} Any left $A$\+module is the quotient module of some
fp\+projective left $A$\+module by its fp\+injective submodule. \qed
\end{thm}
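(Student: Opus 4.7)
The plan is to recognize this theorem as the completeness statement for the cotorsion pair $(A\modl_\fpp,\,A\modl_\fpi)$ and to deduce it from the Eklof--Trlifaj small object argument, combined with a direct pushout construction to pass from~(a) to~(b).

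By the definitions recalled above, if $S$ denotes a representative set of the isomorphism classes of finitely presented left $A$\+modules, then the class of fp\+injective modules coincides with $S^\perp=\{J\in A\modl:\Ext_A^1(M,J)=0\text{ for every }M\in S\}$, while the class of fp\+projective modules is by definition ${}^\perp(S^\perp)$. Thus the theorem asserts nothing more than the completeness of the cotorsion pair cogenerated by the \emph{set}~$S$, which is exactly the setting in which Eklof and Trlifaj's construction operates.

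For part~(a), the plan is a transfinite small object argument. Given a left $A$\+module~$N$, fix for each $M\in S$ a projective presentation $0\rarrow K_M\rarrow F_M\rarrow M\rarrow 0$ with $F_M$ finitely generated free, and build a continuous chain $N=N_0\subseteq N_1\subseteq\cdots\subseteq N_\alpha\subseteq\cdots$ indexed by a sufficiently large regular cardinal~$\kappa$. At each successor stage $N_{\alpha+1}$ is obtained from $N_\alpha$ by simultaneously pushing out all morphisms $K_M\rarrow N_\alpha$ (over all $M\in S$) against the inclusions $K_M\rarrow F_M$; at limit stages one takes filtered unions. Choosing $\kappa$ larger than the cardinality of every $K_M$ forces the union $J:=\bigcup_{\alpha<\kappa}N_\alpha$ to be fp\+injective, since any map $K_M\rarrow J$ factors through some $N_\alpha$ and is absorbed by the pushout at the next stage. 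By construction the successive quotients $N_{\alpha+1}/N_\alpha$ are direct sums of modules in~$S$, so $J/N$ is a transfinite iterated extension of modules in~$S$. Each such $M\in S$ lies in ${}^\perp(S^\perp)$ by definition, and Eklof's lemma on the closure of ${}^\perp(S^\perp)$ under transfinite extensions then shows that $J/N$ is fp\+projective.

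For part~(b), choose any surjection $F\rarrow N$ from a free left $A$\+module~$F$ with kernel~$K$, and apply part~(a) to~$K$ to obtain an exact sequence $0\rarrow K\rarrow J\rarrow P'\rarrow 0$ with $J$ fp\+injective and $P'$ fp\+projective. Taking the pushout of $F\leftarrow K\rarrow J$ yields a module~$P$ fitting into both $0\rarrow J\rarrow P\rarrow N\rarrow 0$ and $0\rarrow F\rarrow P\rarrow P'\rarrow 0$. Since $F$ is free and the class of fp\+projective modules is closed under extensions, $P$ is fp\+projective, and the first sequence is the required presentation. The main technical hurdle is the verification in~(a) that the union $J$ is genuinely fp\+injective: every short exact sequence $0\rarrow J\rarrow E\rarrow M\rarrow 0$ with $M\in S$ must split, which reduces to the cardinality-based smallness step that any representative of a class in $\Ext_A^1(M,J)$, encoded by a morphism $K_M\rarrow J$, factors through some~$N_\alpha$ and hence dies at the next stage. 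This smallness argument is the heart of the Eklof--Trlifaj method.
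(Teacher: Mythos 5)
Your proposal is correct and is essentially the argument the paper implicitly invokes: the paper states the theorem with a citation to Eklof--Trlifaj and Trlifaj's notes rather than giving a proof, and your reconstruction (completeness of the cotorsion pair cogenerated by the set of finitely presented modules via the transfinite small object argument, followed by Salce's pushout trick for part~(b)) is exactly that argument. The paper's subsequent remark that the fp\+projective modules produced by the theorem are transfinitely iterated extensions of finitely presented modules matches the structure of your construction.
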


 The fp\+projective modules in both parts of
Theorem~\ref{fp-complete-cotorsion-theory} are constructed as
certain transfinitely iterated extensions of finitely
presented modules.
 Hence it follows from (the proof of) part~(b) that a left
$A$\+module $P$ is fp\+projective if and only if it is 
a direct summand of a transfinitely iterated extension of
finitely presented left $A$\+modules.

\begin{lem}
 Let $A$ be a left coherent ring.
 Then any finitely generated submodule of an fp\+projective left
$A$\+module is finitely presented.
\end{lem}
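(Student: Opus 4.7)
The plan is to invoke the characterization, recalled just before the statement, of fp-projective modules as direct summands of transfinitely iterated extensions of finitely presented modules, and then to run a transfinite induction along the filtration. Let $P$ be fp-projective with a finitely generated submodule $N\subseteq P$. Choose a transfinitely iterated extension $P'$ of finitely presented modules $M_\alpha$, with an increasing well-ordered filtration $(F_\alpha P')_\alpha$, such that $P$ is a direct summand of $P'$. Since $N$ embeds into $P'$, it suffices to prove the following claim for every index~$\alpha$: every finitely generated submodule of $F_\alpha P'$ is finitely presented.

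I prove this claim by transfinite induction on~$\alpha$. Fix~$\alpha$, let $N\subseteq F_\alpha P'$ be finitely generated, and write $F_{<\alpha}P':=\bigcup_{\beta<\alpha}F_\beta P'$, so that $F_\alpha P'/F_{<\alpha}P'\cong M_\alpha$. The image $\bar N$ of $N$ in $M_\alpha$ is a finitely generated submodule of the finitely presented $A$-module $M_\alpha$, hence finitely presented by left coherence of~$A$. The kernel $K=N\cap F_{<\alpha}P'$ of the surjection $N\twoheadrightarrow\bar N$ is finitely generated by Lemma~\ref{fin-gen-pres-kernel}; since the filtration is increasing and well-ordered, a finite generating set of $K$ lies in a common stage $F_{\beta_0}P'$ with $\beta_0<\alpha$, so $K\subseteq F_{\beta_0}P'$ and by the inductive hypothesis $K$ is finitely presented. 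The short exact sequence $0\to K\to N\to\bar N\to 0$ then exhibits $N$ as an extension of finitely presented modules, hence $N$ is finitely presented. No separate base case is needed: at the minimal index one has $F_{<\alpha}P'=0$, so $K=0$ and $N=\bar N$ directly.

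I do not expect a serious obstacle. The substance of the argument is that the two ingredients required to keep the induction moving --- finite presentation of $\bar N$ (from coherence of $A$) and finite generation of $K$ (from Lemma~\ref{fin-gen-pres-kernel}) --- mesh together naturally; the finite generation of $K$ is precisely what drops $K$ down to a strictly earlier stage $F_{\beta_0}P'$ of the filtration, where the inductive hypothesis applies. The only point worth verifying carefully is that, in the definition of transfinitely iterated extension used in the paper, limit and successor indices are treated on the same footing, so that a single uniform inductive step handles all cases.
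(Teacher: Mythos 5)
Your proof is correct and follows essentially the same route as the paper's own argument: reduce from a direct summand to a transfinitely iterated extension $(P',F)$, then run a transfinite induction on the filtration index, using coherence of $A$ to get $\bar N$ finitely presented, Lemma~\ref{fin-gen-pres-kernel} to get $K=N\cap F_{<\alpha}P'$ finitely generated, and closure of finitely presented modules under extension to conclude. The only difference is cosmetic: you make explicit the step (implicit in the paper's terse ``assumption of induction'') that a finitely generated $K\subseteq\bigcup_{\beta<\alpha}F_\beta P'$ must already lie in a single stage $F_{\beta_0}P'$ with $\beta_0<\alpha$, which is exactly what lets the inductive hypothesis apply.
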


\begin{proof}
 It suffices to show that any finitely generated submodule $N\subset P$
of a transfinitely iterated extension $(P,F)$ of finitely presented
left $A$\+modules $M_\alpha$ is finitely presented.
 Let $\alpha_0$~be the minimal index~$\alpha$ such that $N$ is contained
in $F_\alpha P$ (since $N$ is finitely generated, such indices~$\alpha$
exist).
 The quotient module $N/N\cap\bigcup_{\beta<\alpha}F_\beta P$ is a finitely
generated submodule of a finitely presented left $A$\+module $M_\alpha$,
and consequently, also a finitely presented $A$\+module.
 By Lemma~\ref{fin-gen-pres-kernel}, the $A$\+module
$N\cap\bigcup_{\beta<\alpha}F_\beta P$ is finitely generated; and
the assumption of induction in the ordinal $\{\alpha\}$ tells that
it is finitely presented.
 Now the $A$\+module $N$ is finitely presented as an extension of
two finitely presented $A$\+modules.
\end{proof}

\begin{lem} \label{d-conventional-hom-computing}
 Let $A$ be a left coherent ring, $P^\bu$ be a complex of fp\+projective
left $A$\+modules, and $J^\bu$ be a complex of fp\+injective left
$A$\+modules.
 Then whenever either the complex $P^\bu$ is bounded above, or
the complex $J^\bu$ is bounded below, the Hom complex\/
$\Hom_A(P^\bu,J^\bu)$ computes the groups\/
$\Hom_{\sD(A\modl)}(P^\bu,J^\bu[*])$.
\end{lem}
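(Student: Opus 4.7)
My plan splits into the two symmetric cases. Assume first that $P^\bu$ is bounded above. Choose a bounded above projective resolution $Q^\bu\rarrow P^\bu$; since $Q^\bu$ is a bounded above complex of projectives, it is $K$\+projective, so $\Hom_A(Q^\bu,J^\bu)$ already computes $\Hom_{\sD(A\modl)}(P^\bu,J^\bu[*])$, and the problem reduces to showing that the induced map $\Hom_A(P^\bu,J^\bu)\rarrow\Hom_A(Q^\bu,J^\bu)$ is a quasi-isomorphism, or equivalently that $\Hom_A(C^\bu,J^\bu)$ is acyclic, where $C^\bu$ denotes the cone of $Q^\bu\rarrow P^\bu$; this $C^\bu$ is an acyclic bounded above complex of fp\+projective $A$\+modules (projectives being fp\+projective). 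Symmetrically, when $J^\bu$ is bounded below, one picks a bounded below injective resolution $J^\bu\rarrow I^\bu$ (injectives being fp\+injective) and reduces in the same way to the acyclicity of $\Hom_A(P^\bu,D^\bu)$ for the cone $D^\bu$ of $J^\bu\rarrow I^\bu$, an acyclic bounded below complex of fp\+injective modules.

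The next step is to control the cycle modules of $C^\bu$ and $D^\bu$. In Case~1 I would argue by descending induction from the top of $C^\bu$, using the short exact sequences $0\rarrow Z^n\rarrow C^n\rarrow Z^{n+1}\rarrow 0$ coming from acyclicity together with closure of fp\+projectives under kernels of surjections, to conclude that every cycle module $Z^n(C^\bu)$ is fp\+projective. The definition of fp\+injective then yields $\Ext^1_A(Z^{n+1},J)=0$ for each fp\+injective module $J$, so these short exact sequences remain exact under $\Hom_A({-},J)$; a direct comparison of cycles and coboundaries inside the complex $\Hom_A(C^\bu,J)$, exploiting the surjectivity of $\Hom_A(C^{n+1},J)\rarrow\Hom_A(Z^{n+1},J)$, shows that $\Hom_A(C^\bu,J)$ is acyclic for every single fp\+injective module~$J$. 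Case~2 is dual: ascending induction from the bottom of $D^\bu$ together with closure of fp\+injectives under cokernels of injective morphisms makes every cycle of $D^\bu$ fp\+injective, whence $\Hom_A(P,D^\bu)$ is acyclic for every single fp\+projective module~$P$.

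The remaining step, which I expect to be the main obstacle, is to upgrade these single-object acyclicities to the acyclicity of the full product total Hom complex. For Case~1 I would run the spectral sequence of the bicomplex $E^{p,q}=\Hom_A(C^{-p},J^q)$ associated to $\Hom_A(C^\bu,J^\bu)$, filtered by the second index~$q$; the $E_1$\+page consists of the cohomologies of the rows $E^{\bu,q}=\Hom_A(C^\bu,J^q)$, which vanish by the previous step, so $E_1=0$. The only remaining issue is the convergence of the spectral sequence to the total cohomology, and this is precisely where the boundedness of $C^\bu$ is needed: fixing $N$ with $C^n=0$ for $n>N$, the condition $C^{-p}\ne 0$ forces $p\ge-N$, so in each fixed total degree~$n$ only the indices $q\le n+N$ contribute to the product $\prod_{p+q=n}E^{p,q}$; the $q$\+filtration is therefore bounded above in each total degree, hence exhaustive, Hausdorff, and strongly convergent. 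Consequently $E_1=0$ forces $\Hom_A(C^\bu,J^\bu)$ to be acyclic, as required. Case~2 is entirely analogous: one filters the bicomplex $\Hom_A(P^{-p},D^q)$ by the first index~$p$, with the boundedness of $D^\bu$ from below providing the one-sided bound that ensures convergence.
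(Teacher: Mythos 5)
Your reduction to the two acyclicity assertions, and the descending/ascending induction that makes all the cycle modules of the acyclic cone $C^\bu$ fp\+projective (resp.\ of $D^\bu$ fp\+injective), followed by the single-object acyclicity of $\Hom_A(C^\bu,J)$, are all correct and match the structure of the paper's (sketched) argument. The gap is in the final step, the passage from single-object acyclicity to acyclicity of the full $\Hom$ complex via the spectral sequence: the claimed strong convergence does not hold.

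The $\Hom$ complex $\Hom_A(C^\bu,J^\bu)$ is the \emph{product} total complex of the bicomplex $\Hom_A(C^{-p},J^q)$, and this bicomplex is supported in the half-plane $p\ge -N$. For the row filtration (your ``$q$\+filtration''), each total degree $n$ sees $q$ bounded \emph{above} by $n+N$ but unbounded below, so the filtration pieces $F^s=\{q\ge s\}$ vanish for $s>n+N$ but their union as $s\to-\infty$ is only the \emph{direct sum} $\bigoplus_{q\le n+N}$, not the full product $\prod_{q\le n+N}$. One-sided boundedness therefore does \emph{not} give exhaustiveness, and the spectral sequence converges to the cohomology of the direct-sum total complex, which may differ from that of the product. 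Indeed the general principle you invoke is false: take the bicomplex over $\boZ$ supported on the two diagonals $q=-p$ and $q=-p+1$ for $p\ge 0$, with each entry $\boZ$, horizontal differential the identity $\boZ\to\boZ$, and vertical differential multiplication by~$2$. All rows are two-term complexes $\boZ\xrightarrow{\text{id}}\boZ$, hence acyclic, so $E_1=0$; but the product total complex is a map $\prod_{p\ge 0}\boZ\to\prod_{p\ge 1}\boZ$, $(\alpha_p)\mapsto(\alpha_{p-1}\pm 2\alpha_p)$, whose cokernel is nonzero (e.g.\ $(1,1,1,\dotsc)$ is not in the image, as solving the recursion forces $\alpha_0$ to be a $2$\+adic integer that is not an ordinary integer). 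So $H^1\ne 0$ even though $E_1=0$. Your bicomplex has more structure than this toy example, but the spectral-sequence convergence claim alone, as stated, is not a proof, and the specific sentence ``bounded above in each total degree, hence exhaustive, Hausdorff, and strongly convergent'' is incorrect.

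To close the gap one has to use the special structure of $\Hom_A(C^{-p},J^q)$: the cycle modules of the rows are $\Hom_A(Z^{-p+1},J^q)$, functorially in $J^q$, with $Z^{N+1}=0$. One standard repair is to write $C^\bu=\bigcup_m\sigma_{\ge -m}C^\bu$, so $\Hom_A(C^\bu,J^\bu)=\varprojlim_m\Hom_A(\sigma_{\ge -m}C^\bu,J^\bu)$ with surjective transition maps, and use the Milnor $\varprojlim^1$ exact sequence after identifying $\Hom_A(\sigma_{\ge -m}C^\bu,J^\bu)$ up to quasi-isomorphism with $\Hom_A(Z^{-m},J^\bu)[-m]$ via the bounded case; but one still has to show the relevant $\varprojlim$ and $\varprojlim^1$ vanish, which is not automatic and needs a further argument. (The paper's own proof of this lemma is explicitly labelled as sketchy and does not supply details for this step either.)
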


\begin{proof}
 One notices that the complex $\Hom_A(P^\bu,J^\bu)$ is acyclic
whenever either the complex $P^\bu$ is a bounded above complex of
projective $A$\+modules and the complex $J^\bu$ is acyclic, or
the complex $P^\bu$ is acyclic and the complex $J^\bu$ is a bounded
below complex of injective $A$\+modules.
 Therefore, the complex $\Hom_A(P^\bu,J^\bu)$ computes the groups
$\Hom_{\sD(A\modl)}(P^\bu,J^\bu[*])$ whenever either $P^\bu$ is
a bounded above complex of projective $A$\+modules, or $J^\bu$ is
a bounded below complex of injective $A$\+modules.

 Furthermore, the complex $\Hom_A(P^\bu,J^\bu)$ is acyclic whenever
either the complex $P^\bu$ is an acyclic bounded above complex of
fp\+projective left $A$\+modules and $J^\bu$ is a complex of
fp\+injective left $A$\+modules, or $P^\bu$ is a complex of
fp\+projective left $A$\+modules and $J^\bu$ is a bounded below
acyclic complex of fp\+injective left $A$\+modules.
 Since any bounded above complex of $A$\+modules is the target of
a quasi-isomorphism from a bounded above complex of projective
$A$\+modules, and any bounded below complex of $A$\+modules is
the source of a quasi-isomorphism into a bounded below complex
of injective $A$\+modules, the desired assertions follow.
\end{proof}

\Section{Coderived Category of Modules over a Coherent Ring}

 This section is our take on~\cite[Conjecture~5.9]{Kr2}.
 Notice that this conjecture of Krause's is already resolved (proven
in the coherent and disproven in the noncoherent case) by
\v St'ov\'\i\v cek in~\cite[Theorem~6.12, Corollary~6.13, and
Example~6.15]{Sto}.
 The more elementary approach below is based on the techniques of
working with derived categories of the second kind developed
in~\cite{Psing} and formulated in the form convenient for us here
in~\cite[Appendix~A]{Pcosh}, instead of the set-theoretic methods
of~\cite{Sto}.

 Given an additive category $\sE$, we denote by $\Hot(\sE)$
the homotopy category of (unbounded complexes over)~$\sE$.
 We refer to~\cite[Section~A.1]{Pcosh} for the definitions of
the \emph{coderived category} $\sD^\co(\sE)$ and
the \emph{contraderived category} $\sD^\ctr(\sE)$ of
an exact category $\sE$ with exact functors of infinite direct sum or
infinite product, respectively.
 A slightly different definition of such categories was suggested by
Becker in~\cite[Proposition~1.3.6]{Bec}; it is also used in~\cite{Sto}.
 The definitions in~\cite{Bec} have the advantage of working well
for the category of modules over an arbitrary ring (and also
CDG\+modules over an arbitrary CDG\+ring).
 Our definitions have the advantage of being more explicit and
applicable to abelian/exact categories of quite general nature.

\begin{prop} \label{coderived-right-resolutions}
 Let\/ $\sE$ be an exact category with exact functors of infinite
direct sum, and let\/ $\sJ\subset\sE$ be a full subcategory closed
under infinite direct sums.
 Assume that the full subcategory\/ $\sJ$ is closed under extensions
in\/ $\sE$, and endow it with the induced exact category structure.
 Assume further that\/ $\sJ$ is closed under the passages to
the cokernels of admissible monomorphisms in\/ $\sE$, and that any
object of\/ $\sE$ is the source of an admissible monomorphism into
an object of\/~$\sJ$.
 Then the triangulated functor\/ $\sD^\co(\sJ)\rarrow\sD^\co(\sE)$
induced by the embedding of exact categories\/ $\sJ\rarrow\sE$ is
an equivalence of triangulated categories. 
\end{prop}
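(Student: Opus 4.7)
The plan is to prove both essential surjectivity and fully faithfulness of $F\: \sD^\co(\sJ) \rarrow \sD^\co(\sE)$ via right $\sJ$-resolutions of complexes over $\sE$. The main construction produces, for each complex $C^\bu$ over $\sE$, a complex $J^\bu$ over $\sJ$ and a morphism $C^\bu \rarrow J^\bu$ in $\Hot(\sE)$ whose cone is coacyclic in $\sE$; moreover, when $C^\bu$ is itself a complex over $\sJ$, the cone is also coacyclic in $\sJ$. Essential surjectivity of $F$ follows from the first property. Fully faithfulness follows from both properties, via the calculus of fractions in Verdier quotients: any cospan $X \rarrow W \larrow Y$ in $\Hot(\sE)$ representing a morphism in $\sD^\co(\sE)$ (with $X, Y$ over $\sJ$) can be replaced, by composing the middle term with its resolution $W \rarrow J_W$, by a cospan in $\Hot(\sJ)$ whose relevant cones are $\sJ$-coacyclic.

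The resolution is built as the totalization of a bicomplex. Given $C^\bu$, I construct $J^{\bu,\bu}$ with $J^{p,q} \in \sJ$ for $q \ge 0$ and an augmentation making each column $0 \rarrow C^p \rarrow J^{p,0} \rarrow J^{p,1} \rarrow \cdots$ an exact complex in $\sE$. Row $0$ arises from termwise admissible monomorphisms $C^p \rarrow J^{p,0}$ into $\sJ$; higher rows arise by iteratively embedding the cokernels (taken in $\sE$) into $\sJ$. The horizontal differentials, turning these column resolutions into a coherent bicomplex, are chosen by a horseshoe-style lift that leverages closure of $\sJ$ under extensions and cokernels of admissible monomorphisms. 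Setting $J^\bu := \mathrm{Tot}^\oplus(J^{\bu,\ge 0})$ (which lies in $\sJ$ by direct-sum closure), the augmentation yields the desired morphism $C^\bu \rarrow J^\bu$.

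The cone $\mathrm{Cone}(C^\bu \rarrow J^\bu)$ equals, up to shift, the totalization $\mathrm{Tot}^\oplus(\tilde J)$ of the augmented bicomplex $\tilde J$ with $\tilde J^{p,-1} := C^p$. Each column of $\tilde J$ is a bounded-below exact complex in $\sE$, hence coacyclic (as an iterated extension of short exact sequences, together with the inductive-limit closure of coacyclics for the bounded-below but unbounded-above case). Filter $\mathrm{Tot}^\oplus(\tilde J)$ by the decreasing subcomplexes $F^k := \bigoplus_{p \ge k} \tilde J^{p,\bu}$, whose successive quotients $F^k/F^{k+1}$ equal the exact columns; as $k \rarrow -\infty$, the union is $\mathrm{Tot}^\oplus(\tilde J)$, presenting it as a sequential colimit of iterated extensions of coacyclic objects, hence coacyclic in $\sE$ by the standard closure properties developed in \cite[Appendix~A.1]{Pcosh}. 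When $C^\bu$ is already a complex over $\sJ$, cokernel closure ensures every term of $\tilde J$ lies in $\sJ$ and every column is exact in $\sJ$; running the same filtration argument inside $\sJ$ shows the cone is $\sJ$-coacyclic.

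The principal obstacle is the horseshoe-style assembly of the termwise column resolutions into a coherent bicomplex. Unlike the classical horseshoe lemma, which exploits injectivity of the resolving class to lift morphisms, $\sJ$-objects here have no such lifting property. The standard workaround is to take sufficiently large termwise embeddings at each step---for instance, replacing a minimal $J^{p,q}$ by a direct sum with shifted copies of neighbouring terms, or by a functorial ``cone-of-identity''-type enlargement---so that the horizontal differentials can be defined for formal reasons. The closure of $\sJ$ under direct sums, extensions, and cokernels is precisely what allows these enlargements to remain inside $\sJ$, producing the required bicomplex.
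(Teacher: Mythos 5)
Your overall approach via a bicomplex resolution is the right one, matching the argument that the paper invokes through the dual statement~\cite[Proposition~A.3.1(b)]{Pcosh}. But the full faithfulness step has a genuine gap. After replacing the middle term $W$ of a cospan $X\rarrow W\larrow Y$ by its $\sJ$\+resolution $J_W$, the cone of $Y\rarrow J_W$ is a complex over $\sJ$ which is coacyclic with respect to $\sE$ (octahedron applied to $Y\rarrow W\rarrow J_W$). You then assert it is coacyclic in $\sJ$, and this is exactly what your two constructed properties do \emph{not} give you. Property (a) produces resolutions with $\sE$\+coacyclic cone for arbitrary input; property (b) makes the cone $\sJ$\+coacyclic only for the particular morphism $C^\bu\rarrow J^\bu$ when $C^\bu$ is already over $\sJ$. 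Neither yields the statement that an arbitrary complex over $\sJ$ which happens to be coacyclic in $\sE$ is coacyclic in $\sJ$ --- and that is precisely what full faithfulness needs. Without it, Verdier's localization theorem combined with (a) only identifies $\sD^\co(\sE)$ with $\Hot(\sJ)$ modulo the complexes over $\sJ$ that are $\sE$\+coacyclic, not with $\sD^\co(\sJ)$. Closing this gap is the substantive content of the proposition; it requires a further argument, for instance arranging the resolution construction compatibly along morphisms and short exact sequences of complexes, so that resolutions of $\sE$\+coacyclic complexes are visibly $\sJ$\+coacyclic, together with a well-definedness statement for the resolution (up to $\sJ$\+coacyclics) when applied to a complex over $\sJ$.

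A secondary point: the ``horseshoe'' concern is a red herring. Given admissible monomorphisms $\iota^p\:C^p\rarrow K^p$ into objects of $\sJ$, set $L^p:=K^p\oplus K^{p+1}$ with differential $(a,b)\mapsto(b,0)$ and define $C^\bu\rarrow L^\bu$ by $c\mapsto(\iota^p(c),\iota^{p+1}(dc))$. This is a termwise admissible monomorphism into a contractible complex over $\sJ$, and the horizontal differentials are built in; iterating on the cokernel complex produces the bicomplex with no morphism-lifting at all. You do list this cone-of-identity enlargement as an option, but it is the canonical construction rather than a workaround, and framing the bicomplex assembly as an obstacle needing a horseshoe-style patch confuses the exposition. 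Likewise, the claim that each $\bigoplus_{p\ge k}\tilde J^{p,\bu}$ is an ``iterated extension of coacyclic objects'' should instead be phrased as: it is a bounded-below acyclic complex (total complex of a one-quadrant bicomplex with exact columns), hence coacyclic, and then $\mathrm{Tot}^\oplus(\tilde J)$ is the telescope colimit of these.
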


\begin{proof}
 This is the assertion dual to~\cite[Proposition~A.3.1(b)]{Pcosh}.
\end{proof}

\begin{thm} \label{coderived-fp-injective}
 Let $A$ be a left coherent ring.
 Then the triangulated functor $\sD^\co(A\modl_\fpi)\rarrow
\sD^\co(A\modl)$ induced by the embedding $A\modl_\fpi\rarrow A\modl$
of the exact category of fp\+injective $A$\+modules into the abelian
category of arbitrary $A$\+modules is an equivalence of
triangulated categories.
\end{thm}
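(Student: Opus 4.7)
The plan is to deduce this directly from Proposition~\ref{coderived-right-resolutions} applied to the inclusion of exact categories $\sJ = A\modl_\fpi \hookrightarrow A\modl = \sE$. So the work reduces to verifying the four hypotheses of that proposition, each of which is either trivial or already packaged in the preceding discussion of fp\+injective modules.

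The ambient category $\sE = A\modl$ is abelian, hence exact, with exact functors of arbitrary direct sums. The full subcategory $\sJ = A\modl_\fpi$ is closed under extensions, infinite direct sums, and cokernels of injective morphisms in $A\modl$, as recorded in the section on fp\+injective modules above; thus $\sJ$ inherits an exact structure and is closed under admissible monomorphism cokernels in $\sE$. Finally, every left $A$\+module admits an embedding into an injective module, and every injective module is fp\+injective, so every object of $\sE$ is the source of an admissible monomorphism into an object of~$\sJ$. Applying Proposition~\ref{coderived-right-resolutions} then yields the claimed triangulated equivalence $\sD^\co(A\modl_\fpi)\rarrow\sD^\co(A\modl)$.

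The only place where left coherence of $A$ is invoked is in the closure of $A\modl_\fpi$ under cokernels of injective morphisms, and that is the step I would think of as the ``main obstacle'' in spirit, even though it has already been recorded. Indeed, given a short exact sequence $0\rarrow J_1\rarrow J_2\rarrow K\rarrow 0$ with $J_1,J_2$ fp\+injective, to conclude that $K$ is fp\+injective via the long exact sequence $\Ext^1_A(M,J_2)\rarrow\Ext^1_A(M,K)\rarrow\Ext^2_A(M,J_1)$ for finitely presented $M$, one needs the vanishing of $\Ext^2_A(M,J_1)$. This is the equivalence of the two definitions of fp\+injectivity (the $\Ext^1$\+vanishing one and the $\Ext^{\ge 1}$\+vanishing one) that relies on coherence: under left coherence of $A$, the syzygies of a finitely presented module are again finitely presented (by Lemma~\ref{fin-gen-pres-kernel}), and a dimension\+shift argument reduces higher $\Ext$ to $\Ext^1$. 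Once this ingredient is in place, no further subtlety is needed, and the theorem is an immediate application of the general Proposition~\ref{coderived-right-resolutions}.
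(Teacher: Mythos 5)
Your proof is correct and is essentially identical to the paper's: the theorem is deduced as a direct application of Proposition~\ref{coderived-right-resolutions} with $\sE = A\modl$ and $\sJ = A\modl_\fpi$, the hypotheses of that proposition having already been recorded in the preliminary discussion of fp\+injective modules. Your further remarks on where left coherence enters (closure of $A\modl_\fpi$ under cokernels of monomorphisms, via the equivalence of the $\Ext^1$ and $\Ext^{\ge 1}$ vanishing conditions) correctly identify the only non-trivial ingredient, though the paper takes this as already established.
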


\begin{proof}
 This is a particular case of
Proposition~\ref{coderived-right-resolutions}.
 Similarly one can prove that the coderived category $\sD^\co(A\modl)$
of left CDG\+modules over a CDG\+ring $(A,d,h)$ with a left graded
coherent underlying graded ring $A$ is equivalent to the coderived
category $\sD^\co(A\modl_\fpi)$ of left CDG\+modules with fp\+injective
underlying graded modules.
 (Cf.\ the similar assertion about the contraderived categories of
flat and arbitrary CDG\+modules over CDG\+rings with coherent
underlying graded rings in~\cite[Remark~1.5]{Psing}.)
\end{proof}

 According to~\cite[Th\'eor\`eme~7.10]{GJ}, the projective dimension
of a flat module over an associative ring of the cardinality $\aleph_n$
cannot exceed~$n+1$.
 The following result is simpler, though sounds somewhat similar.

\begin{prop}  \label{cardinality-generators-ideals}
 Let $A$ be a left coherent ring such that any left ideal in $A$
admits a set of generators of the cardinality not
exceeding\/~$\aleph_n$, where $n$~is an integer.
 Then the injective dimension of any fp\+injective left $A$\+module
is not greater than\/~$n+1$.
\end{prop}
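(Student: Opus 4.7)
The plan is to reduce, via the generalized Baer criterion combined with the short exact sequence $0\to K\to A\to A/K\to 0$ (which, since $A$ is projective over itself, gives $\Ext^{n+2}(A/K,J)\cong\Ext^{n+1}(K,J)$), to showing $\Ext^{n+1}(K,J)=0$ for every left ideal $K\subset A$.  I would then strengthen this to the following inductive claim, requiring no global hypothesis on~$A$:  for any left coherent ring $A$, any left ideal $K\subset A$ admitting a generating set of cardinality at most $\aleph_n$, and any fp\+injective left $A$\+module $J$, one has $\Ext^{n+1}(K,J)=0$.  Since fp\+injectivity is preserved under cokernels of monomorphisms into injective modules, every cosyzygy $Z^k$ of $J$ in an injective coresolution is again fp\+injective, so one may freely dimension-shift $\Ext^{n+1}(K,J)=\Ext^1(K,Z^n)$.

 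For the induction I would fix generators $\{x_\alpha\}_{\alpha<\omega_n}$ of $K$ and form the continuous well-ordered chain $K_\alpha := \sum_{\beta<\alpha}Ax_\beta$, so that $K_{\omega_n}=K$ and $K_\alpha$ is generated by $|\alpha|$ elements.  The successor quotients are cyclic, $K_{\alpha+1}/K_\alpha\cong A/L_\alpha$ for $L_\alpha:=\{a\in A:ax_\alpha\in K_\alpha\}$.  The crucial observation is that the chain structure imposes a sharper generation bound on $L_\alpha$ than the global hypothesis on~$A$:  write $L_\alpha=\bigcup_{F\subset\alpha,\,|F|<\infty}L_\alpha^F$ with $L_\alpha^F:=\{a:ax_\alpha\in\sum_{\beta\in F}Ax_\beta\}$; each $L_\alpha^F$ is the kernel of the surjection from $A$ onto the cyclic submodule of $A/\sum_{\beta\in F}Ax_\beta$ generated by the image of $x_\alpha$.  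For finite~$F$, the module $A/\sum_{\beta\in F}Ax_\beta$ is finitely presented by coherence, so its finitely generated submodule in question is also finitely presented, and Lemma~\ref{fin-gen-pres-kernel} forces $L_\alpha^F$ to be finitely generated.  Hence $L_\alpha$ is a directed union of at most $|\alpha|$ finitely generated ideals, and so admits a generating set of cardinality $\le |\alpha|\cdot\aleph_0\le\aleph_{n-1}$ for $\alpha<\omega_n$ (with $L_\alpha$ itself finitely generated whenever $\alpha$ is a finite ordinal).

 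The induction then closes:  the inductive hypothesis applied to the ideal $L_\alpha$ gives $\Ext^n(L_\alpha,J)=0$ for every fp\+injective~$J$; the long exact sequence associated to $0\to L_\alpha\to A\to A/L_\alpha\to 0$ (with $\Ext^{\ge 1}(A,J)=0$) yields $\Ext^{n+1}(K_{\alpha+1}/K_\alpha,J)=\Ext^n(L_\alpha,J)=0$, which dimension-shifts to $\Ext^1(K_{\alpha+1}/K_\alpha,Z^n)=0$.  Eklof's lemma applied to the continuous well-ordered filtration $(K_\alpha)_{\alpha\le\omega_n}$ of $K$---whose successor quotients all lie in the class $\{M:\Ext^1(M,Z^n)=0\}$, which is closed under extensions---now gives $\Ext^1(K,Z^n)=0$, i.e.\ $\Ext^{n+1}(K,J)=0$.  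The base case $n=0$ fits the same scheme with the convention that ``$\aleph_{-1}$\+generated'' means finitely generated:  for $\alpha<\omega$ the ordinal is finite, $L_\alpha$ is finitely generated, $A/L_\alpha$ is finitely presented, and $\Ext^1(A/L_\alpha,J)=0$ is just fp\+injectivity of~$J$.  I anticipate the main technical hurdle to be the bookkeeping in the previous paragraph:  obtaining the sharper bound $\le\aleph_{n-1}$ on $L_\alpha$ from the chain structure, rather than the $\aleph_n$ bound from the global hypothesis on~$A$ (which would break the induction), rests essentially on coherence of~$A$ via Lemma~\ref{fin-gen-pres-kernel}.
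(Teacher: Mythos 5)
Your proof is correct, and it takes a genuinely different route from the one in the paper.

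The paper reduces to $\Ext_A^{n+2}(A/I,J)=0$ as you do, but then writes $A/I=\varinjlim_\alpha A/I_\alpha$ over the directed system of finitely generated subideals $I_\alpha\subset I$, invokes the spectral sequence $E_2^{p,q}=\varprojlim_\alpha^p\Ext_A^q(A/I_\alpha,J)\Rightarrow\Ext_A^{p+q}(A/I,J)$, notes that the $q>0$ columns vanish because each $A/I_\alpha$ is finitely presented (coherence) and $J$ is fp\+injective, and finishes by citing Mitchell's bound: $\varprojlim^p$ vanishes for $p>n+1$ over a directed poset of cardinality $\aleph_n$. Your argument replaces the $\varprojlim^p$/spectral-sequence machinery by a transfinite filtration of $I$ itself and Eklof's lemma, with the novelty concentrated in the cardinality estimate on the relative annihilators $L_\alpha$: writing $L_\alpha$ as a directed union of the finitely generated ideals $L_\alpha^F$ (Lemma~\ref{fin-gen-pres-kernel} plus coherence) gives the crucial drop from $\aleph_n$ to $\aleph_{n-1}$, without which the induction would not close. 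The tradeoffs are the expected ones: the paper's proof is shorter modulo two black boxes (the $\varprojlim$/$\Ext$ spectral sequence and Mitchell's theorem), while yours is self-contained apart from Eklof's lemma---which sits naturally in the cotorsion-theoretic framework the paper is already using---at the cost of some ordinal/cardinal bookkeeping. Both use coherence in essentially the same place, namely to ensure that finitely generated ideals are finitely presented and hence that cyclic quotients $A/L$ with $L$ finitely generated are annihilated by $\Ext^{>0}(-,J)$. One small remark: the observation that cosyzygies of fp\+injectives remain fp\+injective, while true and in the spirit of the paper, is not actually needed for the dimension shift $\Ext^{n+1}(-,J)\cong\Ext^1(-,Z^n)$, which holds unconditionally; you only need $Z^n$ to be some fixed test module to which Eklof's lemma applies.
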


\begin{proof}
 By Baer's criterion, a left $A$\+module $K$ is injective whenever
$\Ext_A^1(A/I,K)=0$ for all left ideals $I\subset A$.
 Hence it suffices to prove that $\Ext_A^{n+2}(A/I,J)=0$ for all
left ideals $I$ and all fp\+injective left $A$\+modules~$J$.
 Any left ideal $I\subset A$ is the inductive limit of the filtered
inductive system of its finitely generated subideals $I_\alpha\subset I
\subset A$, and the quotient module $A/I$ is a filtered inductive
limit of the quotient modules $A/I_\alpha$.
 Furthermore, for any filtered inductive system of left modules
$L_\alpha$ and a left module $M$ over an associative ring $A$ there is
a spectral sequence
$$
 E_2^{p,q}=\varprojlim\nolimits_\alpha^p\Ext_A^q(L_\alpha,M)
 \Longrightarrow\Ext_A^{p+q}(\varinjlim\nolimits_\alpha L_\alpha\;M),
$$
as one can see by replacing $M$ with its right injective resolution
and computing the derived functors of inductive and projective limits
$\varinjlim_\alpha^p L_\alpha$ and
$\varprojlim_\alpha^p\Ext_A^q(L_\alpha,M)$ in terms of
the bar-constructions.
 In particular, if $\Ext_A^q(L_\alpha,M)=0$ for all~$\alpha$ and all
$q>0$, then $\Ext_A^p(\varinjlim_\alpha L_\alpha\;M)\simeq
\varprojlim_\alpha^p\Hom_A(L_\alpha,M)$.
 It remains to recall that the homological dimension of the derived
functor of projective limit along a filtered poset of
the cardinality~$\aleph_n$ does not exceed~$n+1$ \cite{Mit}.
\end{proof}

 The following result is to be compared to the discussion of
contraderived categories over coherent CDG\+rings
in~\cite[Section~3.8]{Pkoszul}.

\begin{thm} \label{coderived-homotopy-injectives}
 Let $A$ be a left coherent ring such that any fp\+injective left
$A$\+module has finite injective dimension.
 Then the triangulated functor $\Hot(A\modl_\inj)\rarrow
\sD^\co(A\modl)$ induced by the embedding of additive/exact
categories $A\modl_\inj\rarrow A\modl$ is an equivalence of
triangulated categories.
\end{thm}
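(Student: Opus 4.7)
The plan is to reduce to Theorem~\ref{coderived-fp-injective} and then construct injective coresolutions of complexes of fp\+injective modules using the finite injective dimension hypothesis. By Theorem~\ref{coderived-fp-injective}, the functor in question factors through the equivalence $\sD^\co(A\modl_\fpi)\simeq\sD^\co(A\modl)$, so it suffices to prove that the embedding $A\modl_\inj\rarrow A\modl_\fpi$ induces an equivalence $\Hot(A\modl_\inj)\simeq\sD^\co(A\modl_\fpi)$. Since every short exact sequence of injective $A$\+modules splits, the exact structure that $A\modl_\inj$ inherits from $A\modl_\fpi$ is the split one, and hence $\sD^\co(A\modl_\inj)=\Hot(A\modl_\inj)$. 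One cannot apply Proposition~\ref{coderived-right-resolutions} directly, because a direct sum of injective $A$\+modules need not be injective in general.

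For essential surjectivity, given a complex $J^\bu$ of fp\+injective $A$\+modules, I would use the hypothesis to pick a bounded injective resolution $0\rarrow J^n\rarrow I^{n,0}\rarrow\dots\rarrow I^{n,d_n}\rarrow 0$ of length $d_n<\infty$ for every index $n$, and assemble these into a Cartan--Eilenberg-type bicomplex $I^{\bu,\bu}$ compatible with the differential of $J^\bu$. Setting $I^\bu=\operatorname{Tot}^\Pi(I^{\bu,\bu})$, each module $I^k=\prod_{n+j=k}I^{n,j}$ is injective because an arbitrary product of injective $A$\+modules is injective. The cone of the natural morphism $J^\bu\rarrow I^\bu$ is the product totalization of the augmented bicomplex whose columns $0\rarrow J^n\rarrow I^{n,0}\rarrow\dots\rarrow I^{n,d_n}\rarrow 0$ are bounded acyclic complexes in $A\modl_\fpi$; a column-filtration argument in the spirit of the proof of Proposition~\ref{coderived-right-resolutions} then shows that this totalization is coacyclic in $A\modl_\fpi$. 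Hence $J^\bu$ and $I^\bu$ represent the same object of $\sD^\co(A\modl_\fpi)$.

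For full faithfulness, I would show that for any coacyclic complex $K^\bu$ in $A\modl_\fpi$ and any complex $I^\bu$ of injective $A$\+modules, the total Hom complex $\Hom^\bu_A(K^\bu,I^\bu)$ is acyclic as a complex of abelian groups. The class of such $K^\bu$ is closed under cones and shifts, and by the natural isomorphism $\Hom^\bu_A(\bigoplus_\alpha K^\bu_\alpha,I^\bu)\simeq\prod_\alpha\Hom^\bu_A(K^\bu_\alpha,I^\bu)$ combined with the exactness of arbitrary products in $\Ab$, it is also closed under arbitrary direct sums. It therefore suffices to treat the generating case $K^\bu=\operatorname{Tot}(L^\bu\rarrow M^\bu\rarrow N^\bu)$ arising from a short exact sequence $0\rarrow L^\bu\rarrow M^\bu\rarrow N^\bu\rarrow 0$ of complexes in $A\modl_\fpi$; here the termwise exactness of $\Hom_A({-},I^n)$ reduces the question to the standard fact that the total complex of a short exact sequence of complexes of abelian groups is acyclic. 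This vanishing implies that $\Hom_{\Hot(A\modl_\fpi)}({-},I^\bu)$ coincides with $\Hom_{\sD^\co(A\modl_\fpi)}({-},I^\bu)$, which together with the fully faithful inclusion $\Hot(A\modl_\inj)\rarrow\Hot(A\modl_\fpi)$ yields the desired full faithfulness.

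The main technical obstacle is verifying the coacyclicity of the cone in the essential-surjectivity step when the lengths $d_n$ are not uniformly bounded in $n$: the product totalization then has infinitely many nonzero summands in each degree, and one must confirm that a suitable transfinite iterated extension or column-filtration argument lands inside the class of coacyclic complexes. Under the hypothesis of Proposition~\ref{cardinality-generators-ideals} the $d_n$ are in fact uniformly bounded, and the argument simplifies considerably.
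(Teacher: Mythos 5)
Your approach is a from-scratch version of what the paper outsources to~\cite[Section~3.7]{Pkoszul}: the paper's proof is a one-liner, invoking that reference after observing that direct sums of injective modules are fp\+injective, so that the sufficient condition there (countable direct sums of injectives having finite injective dimension) is automatic under the theorem's hypothesis. You instead first pass through Theorem~\ref{coderived-fp-injective} to reduce to $\sD^\co(A\modl_\fpi)$, and then build injective coresolutions by hand via a Cartan--Eilenberg bicomplex, checking acyclicity of $\Hom_A(K^\bu,I^\bu)$ for coacyclic $K^\bu$ directly. Both the essential-surjectivity and full-faithfulness steps are sound in outline, and your observation that one cannot apply Proposition~\ref{coderived-right-resolutions} directly (because infinite direct sums of injectives need not be injective over a non-Noetherian ring) is correct and important.

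However, the worry you raise in the final paragraph is not actually a gap, and the resolution you propose (restricting to the hypothesis of Proposition~\ref{cardinality-generators-ideals}) is unnecessary and misplaced. The theorem's own hypothesis already forces a uniform bound on the injective dimensions $d_n$. Indeed, suppose for contradiction that for every $n$ there were an fp\+injective left $A$\+module $J_n$ with $\operatorname{inj.dim} J_n\ge n$. Since $A$ is left coherent, the class of fp\+injective left $A$\+modules is closed under infinite direct sums, so $J=\bigoplus_n J_n$ is fp\+injective; but each $J_n$ is a direct summand of $J$, so $\operatorname{inj.dim} J\ge\operatorname{inj.dim} J_n$ for all $n$, forcing $\operatorname{inj.dim} J=\infty$, contradicting the hypothesis. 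Hence the supremum of the injective dimensions of fp\+injective $A$\+modules is finite, the lengths $d_n$ of your columnwise resolutions can be taken uniformly bounded, and the augmented bicomplex has finitely many nonzero rows, so its product totalization coincides with its direct-sum totalization and the cone is the totalization of a finite acyclic complex of complexes in $A\modl_\fpi$ — hence absolutely acyclic, in particular coacyclic. This finiteness-of-supremum observation is the precise content of the paper's remark that ``direct sums of injective modules are fp\+injective,'' and with it your proof is complete without any appeal to Proposition~\ref{cardinality-generators-ideals}.
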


\begin{proof}
 Moreover, for any CDG\+ring $(A,d,h)$ whose underlying graded ring
$A$ is left coherent and has the property that the injective
dimensions of fp\+injective graded left modules over it are finite,
the homotopy category of left CDG\+modules over $(A,d,h)$ with
injective underlying graded left $A$\+modules is equivalent to
the coderived category of CDG\+modules.
 Indeed, according to~\cite[Section~3.7]{Pkoszul} it suffices that
countable direct sums of injective (graded) $A$\+modules be of
finite injective dimensions, so it remains to recall that direct
sums of injective modules are fp\+injective.
\end{proof}

 Notice that the coderived category of $A$\+modules in the sense of
Becker~\cite[Proposition~1.3.6(2)]{Bec} is \emph{defined} as
the homotopy category of complexes of injective $A$\+modules (or
the coderived category of CDG\+modules over $A$ is defined as
the homotopy category of CDG\+modules with injective underlying
graded $A$\+modules, in the case of a CDG\+ring $A=(A,d,h)$).
 Hence our Theorem~\ref{coderived-homotopy-injectives}, when its
homological dimension condition is satisfied, makes the results of
\v St'ov\'\i\v cek~\cite[Section~6]{Sto} about Becker's coderived
category of complexes of modules over a coherent ring (or complexes
of objects of a locally coherent Grothendieck category) applicable
to the coderived category in our sense.
 In particular, our compact generation result in
Corollary~\ref{compact-generators}(b) below becomes a particular case
of~\cite[Corollary~6.13]{Sto}.

 Furthermore, it is instructive to compare the result of our
Theorem~\ref{coderived-fp-injective} with that
of~\cite[Theorem~6.12]{Sto}.
 According to Theorem~\ref{coderived-fp-injective}, our coderived
category of the abelian category of left $A$\+modules is equivalent to
our coderived category of the exact category of fp\+injective left
$A$\+modules.
 According to~\cite[Theorem~6.12]{Sto}, Becker's coderived category of
the abelian category of left $A$\+modules is equivalent to
the \emph{conventional} derived category of the exact category of
fp\+injective left $A$\+modules.
 In both cases, it is only assumed that the ring $A$ is left coherent. 

\begin{lem} \label{coderived-hom-computing}
 Let $A$ be a left coherent ring.  Then \par
\textup{(a)} for any bounded complex $P^\bu$ of finitely presented
left $A$\+modules and any complex $J^\bu$ of fp\+injective left
$A$\+modules, the Hom complex\/ $\Hom_A(P^\bu,J^\bu)$ computes
the groups\/ $\Hom_{\sD^\co(A\modl)}(P^\bu,J^\bu[*])$; \par
\textup{(b)} assuming that fp\+injective left $A$\+modules have finite
injective dimensions, for any complex $P^\bu$ of fp\+projective left
$A$\+modules and any complex $J^\bu$ of fp\+injective left $A$\+modules
the complex\/ $\Hom_A(P^\bu,J^\bu)$ computes the groups\/
$\Hom_{\sD^\co(A\modl)}(P^\bu,J^\bu[*])$.
\end{lem}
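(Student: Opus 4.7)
For both parts my plan is to resolve $J^\bu$ into a complex of ``$\sD^\co$\+injective'' modules (fp\+injective for~(a), injective for~(b)), show that $\Hom_A(P^\bu,{-})$ detects the resolution as a quasi\+isomorphism of Hom complexes, and then read off the coderived Hom.

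For part~(a) the crucial input is that $\Hom_A(P^\bu,{-})$ commutes with arbitrary direct sums (since $P^\bu$ is bounded with finitely presented terms) and sends short exact sequences of fp\+injective modules to short exact sequences of abelian groups (as $\Ext_A^1(P^i,J)=0$ for $P^i$ finitely presented and $J$ fp\+injective). Consequently the class of complexes $K^\bu$ for which $\Hom_A(P^\bu,K^\bu)$ is acyclic is closed under direct sums and cones and contains totalizations of short exact sequences of complexes of fp\+injectives, so $\Hom_A(P^\bu,{-})$ annihilates every complex coacyclic in $A\modl_\fpi$. Given a fraction $P^\bu\to Y^\bu\xleftarrow{\sim}J^\bu[n]$ representing a morphism in $\sD^\co(A\modl)$, I would use Theorem~\ref{coderived-fp-injective} to replace $Y^\bu$ by a complex $\tilde Y^\bu$ in $A\modl_\fpi$; the cone of $J^\bu[n]\to\tilde Y^\bu$ then lies in $A\modl_\fpi$ and is coacyclic in $A\modl$, hence (the two notions of coacyclicity coinciding on complexes of fp\+injectives via Theorem~\ref{coderived-fp-injective}) coacyclic in $A\modl_\fpi$. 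The resulting vanishing of $\Hom_{\Hot(A\modl)}(P^\bu,{-})$ on this cone will give a canonical $\Hot(A\modl)$\+representative of the morphism; a parallel argument using the injectivity in the long exact sequence handles uniqueness, yielding $\Hom_{\sD^\co(A\modl)}(P^\bu,J^\bu[*])=H^*(\Hom_A(P^\bu,J^\bu))$.

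For part~(b) the fp\+projectivity of each $P^i$ still makes $\Hom_A(P^\bu,{-})$ exact on short exact sequences of fp\+injectives, but commutation with direct sums fails, so I would instead use the finite injective dimension hypothesis through Theorem~\ref{coderived-homotopy-injectives}. Concretely, for a Cartan--Eilenberg injective resolution $J^\bu\to I^\bu$ with each column of bounded (finite) length, the cone $C^\bu$ is the total complex of a double complex whose rows are bounded acyclic complexes of fp\+injective modules (the augmented injective resolutions of the individual $J^i$). The cycles of such a bounded acyclic complex of fp\+injectives are themselves fp\+injective (by closure of fp\+injectives under cokernels of monomorphisms), so $\Hom_A(P^\bu,{-})$ sends each row to an acyclic complex of abelian groups; a double complex bounded in the column direction with acyclic rows then totalizes to an acyclic complex, giving a quasi\+isomorphism $\Hom_A(P^\bu,J^\bu)\to\Hom_A(P^\bu,I^\bu)$. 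Combining this with the standard fact that $\Hom_A(P^\bu,I^\bu[*])$ computes $\Hom_{\sD^\co(A\modl)}(P^\bu,I^\bu[*])$ (complexes of injectives are right orthogonal to coacyclics in $\Hot(A\modl)$) will complete the argument. I expect the main technical obstacle to be the bookkeeping for $\Hom_A(P^\bu,C^\bu)$: since $\Hom_A(P^\bu,{-})$ involves an infinite product in the $P$\+direction, it is essential that the Cartan--Eilenberg resolution be bounded in its resolution direction so that the finite direct sums there commute past the products, which may require a degreewise truncation of $J^\bu$ to bound the relevant injective dimensions uniformly.
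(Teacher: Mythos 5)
Your argument for part~(a) is essentially the paper's: reduce to showing that $\Hom_A(P^\bu,{-})$ annihilates coacyclic complexes of fp\+injectives, using closure of the annihilator class under direct sums (here is where boundedness and finite presentedness of $P^\bu$ enter) and the fact that it contains totalizations of short exact sequences of complexes of fp\+injectives. That part is fine.

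Part~(b) has a genuine gap. The paper does \emph{not} use a Cartan--Eilenberg resolution; it instead observes that, under the finite injective dimension hypothesis, the exact category $A\modl_\fpi$ has finite homological dimension, so a coacyclic complex of fp\+injectives is already \emph{absolutely} acyclic in $A\modl_\fpi$ (by \cite[Remark~2.1]{Psemi}). Since ``absolutely acyclic'' means ``in the thick subcategory generated by totalizations of short exact sequences,'' and $\Hom_A(P^\bu,{-})$ manifestly kills such a totalization $W^\bu$ (here the relevant direct sums in each degree of $W^\bu$ are of three terms, so they commute past the infinite product over the $P$\+direction), one is done with \emph{no} convergence argument. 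Your Cartan--Eilenberg route runs into exactly the obstacle you flag at the end but do not resolve: $\Hom_A(P^\bu,C^\bu)$, where $C^\bu$ is the cone, is \emph{not} the totalization of a double complex bounded in one direction. Unwinding, it is $\prod_i\Hom_A(P^\bu,\tilde C^{i,\bu})[-i]$ --- a product totalization over an unbounded index~$i$ of acyclic complexes, and the product direction carries the bulk of the information. The natural decreasing filtration by~$i$ on this product is complete but \emph{not} exhaustive (an element $(f_p)_p$ with infinitely many nonzero $f_p$ as $p\to-\infty$ need not lie in any finite stage), so the spectral sequence with vanishing $E_1$ does not converge for free. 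Your phrase ``a double complex bounded in the column direction with acyclic rows then totalizes to an acyclic complex'' is true for the underlying bicomplex of $A$\+modules, but after applying $\Hom_A(P^\bu,{-})$ with an unbounded $P^\bu$ you are no longer in that situation. The cleanest repair is to observe that your cone $C^\bu$, being a countable increasing union of finite extensions of absolutely acyclic rows, is coacyclic in $A\modl_\fpi$, hence absolutely acyclic by the finite homological dimension hypothesis --- and then invoke precisely the paper's argument. At that point the Cartan--Eilenberg machinery is doing no work, so you may as well adopt the paper's proof outright. (Separately, your worry about a uniform bound on injective dimensions is harmless: since a direct sum of fp\+injectives is fp\+injective over a coherent ring, the finite-injective-dimension hypothesis automatically forces a uniform bound.)
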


\begin{proof}
 According to (the proof of) Theorem~\ref{coderived-fp-injective},
any complex of left $A$\+modules admits a morphism with a coacyclic
cone into a complex of fp\+injective left $A$\+modules, and any
complex of fp\+injective left $A$\+modules that is coacyclic with
respect to the abelian category of arbitrary left $A$\+modules is
also coacyclic with respect to the exact category of fp\+injective
left $A$\+modules.
 Hence in both parts~(a) and~(b) it suffices to prove that
the complex $\Hom_A(P^\bu,J^\bu)$ is acyclic when (the complex $P^\bu$
satisfies the respective condition and) $J^\bu$ is a coacyclic
complex of fp\+injective left $A$\+modules.
 Furthermore, in the assumption of~(b) the exact category of
fp\+injective left $A$\+modules has finite homological dimension,
so any coacyclic (and even any acyclic) complex in it is
absolutely acyclic~\cite[Remark~2.1]{Psemi}.
 It remains to notice that the complex $\Hom_A$ from a complex of
fp\+projective left $A$\+modules to the total complex of a short
exact sequence of complexes of fp\+injective left $A$\+modules is
acyclic, and the functor $\Hom_A$ from a bounded complex of finitely
generated left $A$\+modules takes infinite direct sums of complexes
of left $A$\+modules to infinite direct sums of complexes of 
abelian groups.
\end{proof}

\begin{cor} \label{compact-generators}
 Let $A$ be a left coherent ring.  Then \par
\textup{(a)} the full subcategory of bounded complexes of finitely
presented left $A$\+modules\/ $\sD^\b(A\modl_\fp)\subset\sD^\co(A\modl)$
in the coderived category of left $A$\+modules consists of compact
objects in\/ $\sD^\co(A\modl)$; \par
\textup{(b)} assuming that fp\+injective left $A$\+modules have finite
injective dimensions, the coderived category\/ $\sD^\co(A\modl)$
is compactly generated, and its full subcategory\/ $\sD^\b(A\modl_\fp)$
is precisely its full subcategory of compact objects.
\end{cor}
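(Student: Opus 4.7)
For part~(a), the plan is to combine Theorem~\ref{coderived-fp-injective} with Lemma~\ref{coderived-hom-computing}(a). Given $P^\bu \in \sD^\b(A\modl_\fp)$ and a family $\{X_\alpha\}$ in $\sD^\co(A\modl)$, Theorem~\ref{coderived-fp-injective} lets me replace each $X_\alpha$ by a complex $J_\alpha^\bu$ of fp\+injective modules, and since $A\modl_\fpi$ is closed under direct sums, the coproduct $\bigoplus_\alpha J_\alpha^\bu$ is again a complex of fp\+injectives. Lemma~\ref{coderived-hom-computing}(a) then computes both $\bigoplus_\alpha \Hom_{\sD^\co(A\modl)}(P^\bu, J_\alpha^\bu[*])$ and $\Hom_{\sD^\co(A\modl)}(P^\bu, \bigoplus_\alpha J_\alpha^\bu[*])$ as cohomologies of the corresponding Hom complexes. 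The boundedness of $P^\bu$, together with finite generation of each $P^n$, ensures that $\Hom_A(P^\bu, {-})$ commutes with arbitrary direct sums at the level of complexes, and hence on cohomology; this is precisely compactness.

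For part~(b), I would first upgrade the pointwise hypothesis to a uniform bound on the injective dimensions of fp\+injective modules: if such a bound failed, one could pick fp\+injective $J_d$ with $\mathrm{id}(J_d) > d$ for each $d$, form the fp\+injective module $\bigoplus_d J_d$, and note that each $J_d$, being a direct summand, satisfies $\mathrm{id}(J_d) \le \mathrm{id}(\bigoplus_d J_d) < \infty$---contradicting $\mathrm{id}(J_d) > d$ once $d$ is large. Fix such a uniform bound~$d$. To show compact generation, let $Y \in \sD^\co(A\modl)$ be right orthogonal to $\sD^\b(A\modl_\fp)$, and via Theorem~\ref{coderived-fp-injective} represent $Y$ by a complex $J^\bu$ of fp\+injectives. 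Taking $P^\bu = A$ in the orthogonality yields $H^n(J^\bu) = 0$, so $J^\bu$ is acyclic and we get short exact sequences $0 \to Z^{n-1} \to J^{n-1} \to Z^n \to 0$ on the cycle modules $Z^n = \ker d^n$. Taking $P^\bu = M$ an arbitrary finitely presented module, the fp\+injectivity of $J^{n-1}$ gives $\Ext^1_A(M, J^{n-1}) = 0$, so the long exact sequence reduces $H^n(\Hom_A(M, J^\bu))$ to $\Ext^1_A(M, Z^{n-1})$; the vanishing of this for all $M$ and~$n$ identifies every cycle $Z^n$ as fp\+injective. Hence $J^\bu$ is an acyclic complex in the exact category $A\modl_\fpi$, which has homological dimension at most~$d$; by \cite[Remark~2.1]{Psemi}, any such acyclic complex is absolutely acyclic in $A\modl_\fpi$, and therefore coacyclic in $A\modl$, so $Y \cong 0$ in $\sD^\co(A\modl)$.

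To identify the compact objects, I would invoke Neeman's theorem: in a compactly generated triangulated category, the full subcategory of compact objects coincides with the thick closure of any set of compact generators. Full faithfulness of $\sD^\b(A\modl_\fp) \hookrightarrow \sD^\co(A\modl)$---a formality from Lemma~\ref{coderived-hom-computing}(a) together with Corollary~\ref{derived-fp-fully-faithful}, via a quasi-isomorphism from a bounded complex of finitely presented modules into a bounded below complex of fp\+injectives---together with the Karoubian property of the abelian category $A\modl_\fp$ implies that $\sD^\b(A\modl_\fp)$ is already thick in $\sD^\co(A\modl)$, so it coincides with the full subcategory of compact objects. The subtlest step I anticipate is the passage from the abstract orthogonality to the fp\+injectivity of every cycle $Z^n$: the identification $H^n(\Hom_A(M, J^\bu)) \cong \Ext^1_A(M, Z^{n-1})$ is what converts the triangulated hypothesis into concrete module-theoretic information, and the uniform-bound observation on injective dimensions is the key that unlocks the finite-homological-dimension argument in $A\modl_\fpi$.
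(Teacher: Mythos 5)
Your proof is correct and follows essentially the same route as the paper's: in part~(a), compute $\Hom$ against a complex of fp\+injectives via Lemma~\ref{coderived-hom-computing}(a) and use that $\Hom_A$ from a bounded complex of finitely presented modules commutes with direct sums; in part~(b), test orthogonality first against $P^\bu=A$ to kill the cohomology of $J^\bu$, then against arbitrary finitely presented $M$ to identify $\Ext^1_A(M,Z^n)$ and conclude the cycles are fp\+injective, and finally invoke finite homological dimension of $A\modl_\fpi$ and \cite[Remark~2.1]{Psemi} to pass from acyclic to absolutely acyclic. Two small points of divergence are worth flagging. First, your opening move in~(b)---upgrading the pointwise hypothesis to a uniform bound on injective dimensions via the direct-sum trick---is a useful observation that the paper leaves implicit when it asserts $A\modl_\fpi$ ``has finite homological dimension''; your argument supplies the missing justification for that phrase. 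Second, for identifying the compact objects, the paper proves idempotent completeness of $\sD^\b(A\modl_\fp)$ in a direct, elementary way: it embeds $\sD^\b(A\modl_\fp)$ into $\sD(A\modl)$ as the subcategory $\sD^\b_\fp(A\modl)$ of complexes with bounded finitely presented cohomology (Corollary~\ref{derived-fp-fully-faithful} and Lemma~\ref{fin-gen-proj-complex}), and observes that this characterization is obviously stable under direct summands inside the idempotent-complete $\sD(A\modl)$. Your appeal to ``the Karoubian property of $A\modl_\fp$'' implicitly relies on the Balmer--Schlichting theorem that $\sD^\b$ of an idempotent-complete exact category is idempotent complete---a correct but heavier tool; you should cite it, or substitute the paper's more explicit route through $\sD^\b_\fp(A\modl)\subset\sD(A\modl)$.
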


\begin{proof}
 By~\cite[Lemma~A.1.2]{Pcosh}, the full subcategory of bounded below
complexes in $\sD^\co(A\modl)$ is equivalent to the category
$\sD^+(A\modl)$, in which $\sD^\b(A\modl_\fp)\subset\sD^\b(A\modl)
\subset\sD^+(A\modl)$ is a full subcategory.
 Hence the full subcategory of bounded complexes of finitely
presented left $A$\+modules in $\sD^\co(A\modl)$ is indeed equivalent
to $\sD^\b(A\modl_\fp)$.
 Since the class of fp\+injective modules is closed under infinite
direct sums in $A\modl$, the rest of part~(a) follows from
Lemma~\ref{coderived-hom-computing}(a).

 In part~(b), it is clear that the category $\sD^\b(A\modl_\fp)$
contains the images of its idempotent endomorphisms, since so does
the derived category $\sD(A\modl)$, where $\sD^\b(A\modl_\fp)$ is
an idempotent closed subcategory by Lemma~\ref{fin-gen-proj-complex}.
 So it remains to show that any complex $C^\bu$ of left $A$\+modules
such that $\Hom_{\sD^\co(A\modl)}(P^\bu,C^\bu)=0$ for all
$P^\bu\in\sD^\b(A\modl_\fp)$ vanishes in $\sD^\co(A\modl)$.
 Here one can argue as in~\cite[Lemma~2.2]{Kra}.
 Represent the object $C^\bu\in\sD^\co(A\modl)$ by a complex of
fp\+injective left $A$\+modules~$J^\bu$; then
$\Hom_{\sD^\co(A\modl)}(P^\bu,C^\bu[*])$ can be computed as
$\Hom_A(P^\bu,J^\bu)$.

 If the complex $J^\bu$ (or~$C^\bu$) has a nonzero cohomology module
$H^nJ^\bu\ne0$ in some degree~$n$, then there exists a finitely
presented $A$\+module $P$ (e.~g., $P=A$) and a morphism of complexes
$P\rarrow J^\bu[n]$ inducing a nonzero map of the cohomology modules.
 Otherwise, when $H^*(J^\bu)=0$, one has $H^{n+1}\Hom_A(P,J^\bu)\simeq
\Ext^1_A(P,Z^n)$, where $Z^n$ is the kernel of the differential
$J^n\rarrow J^{n+1}$, for any finitely presented left $A$\+module~$P$.
 If $\Ext^1_A(P,Z^n)=0$ for all $P$ and~$n$, then the $A$\+modules
$Z^n$ are fp\+injective and the complex $J^\bu$ is acyclic in
the exact category $A\modl_\fpi$.
 Since by the assumption of~(b) this exact category has finite
homological dimension, by~\cite[Remark~2.1]{Psemi} the complex $J^\bu$
is coacyclic (and even absolutely acyclic) in $A\modl_\fpi$.
 (Cf.~\cite[Proposition~6.4]{Sto}, where a similar argument is presented
without any homological dimension assumptions.)
\end{proof}

\Section{Dualizing Complexes and Contravariant Duality}
\label{contravariant-duality}

 The notion of a bimodule over an arbitrary pair of rings is
inherently problematic from the homological point of view.
 It suffices to consider the example of $A$\+$B$\+bimodules for
the pair of rings $A=\boZ/4$ and $B=\boZ/2$ in order to see where
the problem lies.
 In particular, it is \emph{not} always possible to embed
an $A$\+$B$\+module into an $A$\+injective $A$\+$B$\+bimodule.
 Restricting to the case $A=B$ does not help much, as the trouble
repeats itself for the ring $A=B=\boZ/2\oplus\boZ/4$.
 Assuming that both $A$ and $B$ are flat algebras over the same
commutative ring~$k$ and working with $A$\+$B$\+bimodules over~$k$
(i.~e., left modules over $A\ot_k B^\rop$) resolves the problem.

 One has to deal with this issue when defining the notion of
a dualizing complex over a pair of noncommutative rings.
 Several approaches have been tried in the literature, from restricting
outright to the case of algebras over a field~\cite{Yek,YZ} to
specifying explicit left and right adjustness conditions on complexes
of bimodules~\cite{CFH,Miy,Pcosh}.
 In this section we show that the most na\"\i ve weak definition of
a dualizing complex works well enough to provide a contravariant
equivalence between bounded derived categories of finitely presented
modules.

 Given two associative rings $A$ and $B$, denote by $A\bimod B$
the abelian category of $A$\+$B$\+bimodules.
 Let $A$ be a left coherent ring and $B$ be a right coherent ring.
 A complex of $A$\+$B$\+bimodules $D^\bu\in\sD(A\bimod B)$ is said to be
a \emph{weak dualizing complex} for $A$ and $B$ if the following
conditions are satisfied:
\begin{enumerate}
\renewcommand{\theenumi}{\roman{enumi}$^{\mathrm{w}}$}
\item as a complex of left $A$\+modules, $D^\bu$ is quasi-isomorphic
to a finite complex of injective $A$\+modules; and as a complex of
right $B$\+modules, $D^\bu$ is quasi-isomorphic to a finite complex
of injective $B$\+modules;
\renewcommand{\theenumi}{\roman{enumi}}
\item the $A$\+$B$\+bimodules of cohomology $H^*(D^\bu)$ of
the complex $D^\bu$ are finitely presented left $A$\+modules and
finitely presented right $B$\+modules;
\item the homothety maps $A\rarrow\Hom_{\sD(\modr B)}(D^\bu,D^\bu[*])$
and $B^\rop\rarrow\Hom_{\sD(A\modl)}\allowbreak(D^\bu,D^\bu[*])$ are
isomorphisms of graded rings.
\end{enumerate} 

 The following result is a slight generalization
of~\cite[Propositions~3.4\+-3.5]{Yek} and~\cite[first assertion of
Corollary~2.8]{Miy}.

\begin{thm}
 Let $D^\bu$ be a weak dualizing complex for a left coherent ring $A$
and a right coherent ring~$B$.
 Then there is an anti-equivalence between the bounded derived
categories\/ $\sD^\b(A\modl_\fp)$ and\/ $\sD^\b(\modrfp B)$ of finitely
presented left $A$\+modules and finitely presented right $B$\+modules
provided by the mutually inverse functors\/ $\boR\Hom_A({-},D^\bu)$
and\/ $\boR\Hom_{B^\rop}({-},D^\bu)$. 
\end{thm}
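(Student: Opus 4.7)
The plan is to execute the standard biduality argument for a dualizing complex, being careful to use the finite injective dimensions from condition~(i$^{\mathrm{w}}$) to substitute for the absence of finite projective dimensions in the purely coherent setting.

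First I would verify that the functors are well-defined on the stated categories. Given $M\in\sD^\b(A\modl_\fp)$, Lemma~\ref{fin-gen-proj-complex} produces a bounded above complex $F^\bu$ of finitely generated free left $A$\+modules quasi-isomorphic to $M$, and I set $\boR\Hom_A(M,D^\bu):=\Hom_A(F^\bu,D^\bu)$, naturally a complex of right $B$\+modules. Its cohomology is bounded because $D^\bu$ is quasi-isomorphic as a complex of left $A$\+modules to a finite complex of injective $A$\+modules (condition~i$^{\mathrm{w}}$), which bounds $\Ext_A^\bu(M,D^\bu)$ in a finite range. Finite presentation of the right $B$\+module cohomology then follows by devissage: by Corollary~\ref{derived-fp-fully-faithful}, $M$ is represented by a bounded complex of finitely presented left $A$\+modules; any such module $P$ admits a presentation $A^k\rarrow A^\ell\rarrow P\rarrow0$ with finitely presented kernel by Lemma~\ref{fin-gen-pres-kernel}, and $H^*(D^\bu)$ is finitely presented as a right $B$\+module by condition~(ii), so the right coherence of $B$ propagates finite presentation through kernels, cokernels and extensions to all of $\Ext_A^\bu(P,D^\bu)$. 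Symmetrically, $\boR\Hom_{B^\rop}(-,D^\bu)$ carries $\sD^\b(\modrfp B)$ into $\sD^\b(A\modl_\fp)$.

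Next I would construct the biduality natural transformation
\[
 \eta_M\:M\lrarrow\boR\Hom_{B^\rop}\bigl(\boR\Hom_A(M,D^\bu),D^\bu\bigr)
\]
from the chain-level evaluation $F^\bu\rarrow\Hom_{B^\rop}(\Hom_A(F^\bu,D^\bu),D^\bu)$, postcomposed with the natural comparison to a bounded above finitely generated free right $B$\+module resolution of $\Hom_A(F^\bu,D^\bu)$. For $M=A$ in degree~$0$ we have $\boR\Hom_A(A,D^\bu)=D^\bu$, and condition~(iii) says precisely that $\eta_A$ is a quasi-isomorphism. Additivity covers all finitely generated free $A$\+modules, and the five-lemma applied to distinguished triangles covers every bounded complex of finitely generated free left $A$\+modules, so $\eta_M$ is an isomorphism for every perfect~$M$.

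The main obstacle is extending this from perfect complexes to all of $\sD^\b(A\modl_\fp)$, since over a general coherent ring a finitely presented module need not have a bounded projective resolution, and so $\sD^\b(A\modl_\fp)$ need not coincide with the thick subcategory generated by $A$. The idea is to use the finite injective dimensions of $D^\bu$ on both sides to localize the biduality computation to a bounded range of the resolution $F^\bu$: for any fixed cohomological degree $i$, the groups $H^i\bigl(\Hom_A(F^\bu,D^\bu)\bigr)$ and $H^i\bigl(\boR\Hom_{B^\rop}\boR\Hom_A(M,D^\bu)\bigr)$ depend only on $F^j$ for $j$ in a window whose width is controlled by the injective dimensions of $D^\bu$ and by~$i$. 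Replacing $F^\bu$ outside this window by the brutal truncation $\sigma^{\geq -N}F^\bu$, which is a perfect complex and hence already covered, does not alter $H^i(\eta_M)$, so $H^i(\eta_M)$ is an isomorphism for every~$i$. Applying the same argument with $A$ and $B$ swapped, using the other half of~(iii), gives the inverse biduality $N\simeq\boR\Hom_A(\boR\Hom_{B^\rop}(N,D^\bu),D^\bu)$ on $\sD^\b(\modrfp B)$, and the two identifications together exhibit $\boR\Hom_A(-,D^\bu)$ and $\boR\Hom_{B^\rop}(-,D^\bu)$ as the claimed mutually quasi-inverse anti-equivalences.
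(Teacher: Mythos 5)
Your proof is essentially correct and follows the same strategy as the paper's: construct the two derived functors via finitely generated free resolutions (Lemma~\ref{fin-gen-proj-complex}), verify boundedness of cohomology via condition~(i$^{\mathrm{w}}$) and finite presentation via condition~(ii) and coherence, reduce the biduality quasi-isomorphism to the one-term case $P^\bu=A$ where it becomes condition~(iii), and then propagate this to all of $\sD^\b(A\modl_\fp)$ by exploiting the fact that $D^\bu$ is a \emph{finite} complex. The paper packages this last propagation step more compactly: after replacing $D^\bu$ by a quasi-isomorphic finite complex of $A$\+$B$\+bimodules and choosing a finite complex $''D^\bu$ of injective right $B$\+modules quasi-isomorphic to $D^\bu$, one computes $\boR\Hom_{B^\rop}({-},D^\bu)$ directly as $\Hom_{B^\rop}({-},{}''D^\bu)$, so that each degree of the double dual $\Hom_{B^\rop}(\Hom_A(P^\bu,D^\bu),{}''D^\bu)$ visibly depends on only a uniformly bounded range of terms $P^n$; the termwise quasi-isomorphism for finitely generated projective $P^n$ then totalizes. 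Your brutal-truncation window argument implements precisely the same localization in degree, and is valid for the same reason. One small point worth tightening in your write-up: you construct the biduality map $\eta_M$ via a free resolution of $\Hom_A(F^\bu,D^\bu)$ on the $B$\+side, but such a resolution is built top-down and does not localize cleanly in the degree of $F^\bu$; the window argument you invoke really relies on computing $\boR\Hom_{B^\rop}({-},D^\bu)$ via the finite injective resolution $''D^\bu$ (which you do allude to with ``using the finite injective dimensions of $D^\bu$ on both sides''), and the paper's formulation with $''D^\bu$ makes this manifest. The paper also records the contravariant adjointness of the two functors, which you bypass by proving biduality directly on both sides; either organization is fine.
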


\begin{proof}
 Let $A\modl_\proj\subset A\modl$ denote the additive category of
projective left $A$\+modules and let $\Hot^-(A\modl_\proj)$ be its
bounded above homotopy category.
 Then the bounded derived category of left $A$\+modules
$\sD^\b(A\modl)$ can be identified with the full subcategory in
$\Hot^-(A\modl_\proj)$ consisting of complexes with bounded cohomology.
 Restricting the contravariant functor
$$
 \Hom_A({-},D^\bu)\:\Hot(A\modl)^\sop\lrarrow\Hot(\modr B)
$$
acting between the homotopy categories of left $A$\+modules and
right $B$\+modules to the full subcategory
$$
 \sD^\b(A\modl)\subset\Hot^-(A\modl_\proj)\subset\Hot(A\modl),
$$
we obtain the derived functor $\boR\Hom_A({-},D^\bu)\:
\sD^\b(A\modl)^\sop\rarrow\sD(\modr B)$.
 
 Let us show that the image of this functor is contained in
the bounded derived category $\sD^\b(\modr B)\subset\sD(\modr B)$
of right $B$\+modules.
 Indeed, the property of a complex of $B$\+modules to have bounded
cohomology only depends on its underlying complex of abelian groups.
 The composition $\sD^\b(A\modl)^\sop\rarrow\sD(\modr B)\rarrow\sD(\Ab)$
of the functor $\boR\Hom_A$ with the forgetful functor $\sD(\modr B)
\rarrow\sD(\Ab)$ to the derived category of abelian groups can be
computed as the functor $\Hom_A({-},{}'\!\.D^\bu)$, where ${}'\!\.D^\bu$
is a finite complex of injective left $A$\+modules quasi-isomorphic
to~$D^\bu$.
 The latter functor obviously takes $\sD^\b(A\modl)$ to
$\sD^\b(\Ab)$.

 We have obtained the right derived functor 
$\boR\Hom_A({-},D^\bu)\:\sD^\b(A\modl)^\sop\rarrow\sD^\b(\modr B)$;
similarly, there is the right derived functor
$\boR\Hom_{B^\rop}({-},D^\bu)\:\allowbreak
\sD^\b(\modr B)^\sop\rarrow\sD^\b(A\modl)$.
 Let us show that these two contravariant functors are right
adjoint to each other; in other words, for any complexes
$M^\bu\in\sD^\b(A\modl)$ and $N^\bu\in\sD^\b(\modr B)$ there is
a natural isomorphism of abelian groups {\hbadness=1600
$$
 \Hom_{\sD^\b(\modr B)}(N^\bu,\boR\Hom_A(M^\bu,D^\bu))\simeq
 \Hom_{\sD^\b(A\modl)}(M^\bu,\boR\Hom_{B^\rop}(N^\bu,D^\bu)).
$$
 Indeed, represent the object $M^\bu$ by a bounded above complex
of projective left $A$\+modules $P^\bu$ and the object $N^\bu$ by
a bounded above complex of projective right $B$\+modules $Q^\bu$;
then the passage to the degree-zero cohomology groups in
the natural isomorphism of complexes
$\Hom_{B^\rop}(Q^\bu,\Hom_A(P^\bu,D^\bu))\simeq
\Hom_A(P^\bu,\Hom_{B^\rop}(Q^\bu,D^\bu))$ provides the desired
isomorphism of the Hom groups in the derived categories.}

 Furthermore, let us check that the functor $\Hom_A({-},D^\bu)$ takes
the bounded derived category of finitely presented left $A$\+modules
$\sD^\b(A\modl_\fp)\subset\sD^\b(A\modl)$ into the bounded derived
category of finitely presented right $B$\+modules $\sD^\b(\modrfp B)
\subset\sD^\b(\modr B)$ (see Corollary~\ref{derived-fp-fully-faithful}).
 Indeed, an object of $\sD^\b(A\modl_\fp)$ can be represented by
a bounded above complex of finitely generated projective left
$A$\+modules $P^\bu$, and the complex of $A$\+$B$\+bimodules with
$D^\bu$ with bounded cohomology can be replaced by a quasi-isomorphic
finite complex of $A$\+$B$\+bimodules.
 Then the property of every cohomology module of the complex
$\Hom_A(P^\bu,D^\bu)$ to be finitely presented over $B$ only depends
on a finite fragment of the complex $P^\bu$, which reduces
the question to the case of a one-term complex $P^\bu=P$ corresponding
to a finitely generated projective $A$\+module~$P$.
 It remains to recall that the cohomology bimodules of
the complex~$D^\bu$ were assumed to be finitely presented right
$B$\+modules.

 We have constructed the derived functor
\begin{align*}
 \boR\Hom_A({-},D^\bu)\:\sD^\b(A\modl_\fp)^\sop &\lrarrow
 \sD^\b(\modrfp B); \\
\intertext{similarly one obtains the derived functor}
 \boR\Hom_{B^\rop}({-},D^\bu)\:\sD^\b(\modrfp B)^\sop &\lrarrow
 \sD^\b(A\modl_\fp).
\end{align*}
 It remains to prove that these are mutually inverse anti-equivalences.
 For this purpose, we will show that the adjunction maps
are quasi-isomorphisms; it suffices to check that these are
quasi-isomorphisms of complexes of abelian groups.

 Let an object of the derived category $\sD^\b(A\modl_\fp)$ be
represented by a bounded above complex of finitely generated
projective left $A$\+modules~$P^\bu$.
 Replace the complex $D^\bu$ by a quasi-isomorphic finite complex
of $A$\+$B$\+bimodules; and let ${}''\!\.D^\bu$ be a finite complex of
injective right $B$\+modules endowed with a quasi-isomorphism of
complexes of right $B$\+modules $D^\bu\rarrow{}''\!\.D^\bu$.
 Then checking that the natural map of complexes of abelian groups
$P^\bu\rarrow\Hom_{B^\rop}(\Hom_A(P^\bu,D^\bu),{}''\!\.D^\bu)$ is
a quasi-isomorphism reduces to the case of a one-term complex
$P^\bu=A$, when the desired assertion becomes an expression of
the assumption that the homothety map
$A\rarrow\Hom_{\sD(\modr B)}(D^\bu,D^\bu[*])$ is an isomorphism of
graded rings.
\end{proof}

 Notice that the construction of the duality functors between
the categories $\sD^\b(A\modl_\fp)$ and $\sD^\b(\modrfp B)$ becomes
much simpler when $D^\bu$ is a finite complex of fp\+injective
left $A$\+modules and right $B$\+modules (cf.\ the definition
of a dualizing complex in the next Section~\ref{covariant-duality}).
 In this case one can simply apply the underived functors
$\Hom_A({-},D^\bu)$ and $\Hom_{B^\rop}({-},D^\bu)$ to bounded complexes
of finitely presented left $A$\+modules and right $B$\+modules,
obtaining complexes of right $B$\+modules and left $A$\+modules
with bounded and finitely presented cohomology modules (which
form categories equivalent to $\sD^\b(\modrfp B)$ and
$\sD^\b(A\modl_\fp)$ by Corollary~\ref{derived-fp-fully-faithful}).

 The following definition can be found in~\cite{Miy}
(see also~\cite[Section~B.4]{Pcosh}).
 Let $A$ be a left coherent ring and $B$ be a right coherent ring.
 A finite complex of $A$\+$B$\+bimodules $D^\bu$ is said to be
a \emph{strong dualizing complex} for the rings $A$ and $B$ if
the following conditions are satisfied:
\begin{enumerate}
\renewcommand{\theenumi}{\roman{enumi}$^{\mathrm{s}}$}
\item the terms of the complex $D^\bu$ are injective left $A$\+modules
and injective right $B$\+modules;
\renewcommand{\theenumi}{\roman{enumi}}
\item the $A$\+$B$\+bimodules of cohomology $H^*(D^\bu)$ of
the complex $D^\bu$ are finitely presented left $A$\+modules and
finitely presented right $B$\+modules;
\renewcommand{\theenumi}{\roman{enumi}$^{\mathrm{s}}$}
\item the homothety maps $A\rarrow\Hom_{B^\rop}(D^\bu,D^\bu)$ and
$B^\rop\rarrow\Hom_A(D^\bu,D^\bu)$ are quasi-isomorphisms of
DG\+rings.
\end{enumerate}

 The condition~(ii) is the same as in the above definition of
a weak dualizing complex.
 In the assumption of the condition~(i$^{\mathrm{s}}$),
the condition~(iii$^{\mathrm{s}}$) is an equivalent restatement
of the condition~(iii).
 The following result is the second assertion
of~\cite[Corollary~2.8]{Miy}; see also~\cite[Proposition~1.3]{YZ}.

\begin{thm}
 Let $D^\bu$ be a strong dualizing complex for a left coherent
ring $A$ and a right coherent ring~$B$.
 Then there is an anti-equivalence between the derived categories
of unbounded complexes of left $A$\+modules and right $B$\+modules
with finitely presented cohomology modules\/ $\sD_\fp(A\modl)$ and\/
$\sD_\fp(\modr B)$ provided by the mutually inverse functors\/
$\Hom_A({-},D^\bu)$ and\/ $\Hom_{B^\rop}({-},D^\bu)$.
\end{thm}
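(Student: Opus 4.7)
The approach is to reduce the unbounded case to the bounded one handled by the preceding theorem, exploiting the fact that $D^\bu$ is a bounded complex of modules injective on both sides.

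First I would establish that the underived functors already descend to the derived categories. Since $D^\bu$ consists of injective left $A$\+modules in each of finitely many degrees, the functor $\Hom_A({-},D^\bu)$ takes acyclic complexes of left $A$\+modules to acyclic complexes of right $B$\+modules: for a single injective this is the standard extension property, and the general bounded case follows by stupidly filtering $D^\bu$. Hence $\Hom_A({-},D^\bu)$ descends to a triangulated contravariant functor $\sD(A\modl)\rarrow\sD(\modr B)$ directly, with no need to take right derived functors; symmetrically for $\Hom_{B^\rop}({-},D^\bu)$. This also produces a natural biduality morphism $\eta_{M^\bu}\:M^\bu\rarrow\Hom_{B^\rop}(\Hom_A(M^\bu,D^\bu),D^\bu)$ in $\sD(A\modl)$, already defined at the level of complexes.

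The crucial technical input is a \emph{truncation locality} property. Suppose $D^\bu$ is concentrated in degrees $[a,b]$. Then $\Hom_A(N^\bu,D^\bu)$ is concentrated in degrees $\le b-k$ whenever $N^\bu$ is concentrated in degrees $\ge k$, and in degrees $\ge a-l+1$ whenever $N^\bu$ is concentrated in degrees $\le l-1$. Applying $\Hom_A({-},D^\bu)$ to the smart-truncation triangles $\tau_{\le k}M^\bu\rarrow M^\bu\rarrow\tau_{>k}M^\bu$ and $\tau_{<l}M^\bu\rarrow M^\bu\rarrow\tau_{\ge l}M^\bu$, one deduces that for any fixed integer $n$ and any $l\ll 0\ll k$ (depending on~$n$), the natural maps induce an isomorphism $H^n\Hom_A(M^\bu,D^\bu)\cong H^n\Hom_A(\tau_{[l,k]}M^\bu,D^\bu)$, where $\tau_{[l,k]}$ denotes $\tau_{\ge l}\tau_{\le k}$. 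The analogous statement holds for $\Hom_{B^\rop}({-},D^\bu)$.

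With this, let $M^\bu\in\sD_\fp(A\modl)$ and fix~$n$. The truncation $\tau_{[l,k]}M^\bu$ has cohomology equal to that of $M^\bu$ in degrees $[l,k]$ and zero outside, hence lies in $\sD^\b(A\modl_\fp)$ by Corollary~\ref{derived-fp-fully-faithful}. Since condition~(i$^{\mathrm{s}}$) plainly implies~(i$^{\mathrm{w}}$), $D^\bu$ is also a weak dualizing complex, and the preceding theorem gives $\boR\Hom_A(\tau_{[l,k]}M^\bu,D^\bu)\in\sD^\b(\modrfp B)$. As $D^\bu$ consists of injective left $A$\+modules, $\boR\Hom_A$ agrees with the underived $\Hom_A$, and the locality lemma then identifies $H^n\Hom_A(M^\bu,D^\bu)$ with a finitely presented right $B$\+module. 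Hence $\Hom_A({-},D^\bu)$ factors through $\sD_\fp(\modr B)$; the other functor is handled symmetrically.

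For biduality, fix~$n$ and choose $l\ll 0\ll k$ large enough that the locality lemma applies both to the outer $\Hom_{B^\rop}({-},D^\bu)$ in degree~$n$ and, for every degree in the resulting window, to the inner $\Hom_A({-},D^\bu)$ applied to~$M^\bu$. Naturality of $\eta$ under the smart-truncation maps $\tau_{\le k}M^\bu\hookrightarrow M^\bu$ and $M^\bu\twoheadrightarrow\tau_{\ge l}M^\bu$ produces a commutative diagram that, on $H^n$, identifies $H^n(\eta_{M^\bu})$ with $H^n(\eta_{\tau_{[l,k]}M^\bu})$. The latter is an isomorphism by the preceding theorem's anti-equivalence on $\sD^\b(A\modl_\fp)$, so $H^n(\eta_{M^\bu})$ is an isomorphism for every~$n$. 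The principal technical obstacle is the truncation bookkeeping through the double application of~$\Hom$: one must verify that the locality lemma, applied successively to the inner and outer $\Hom$'s with compatible choices of $l$ and~$k$, does produce isomorphisms on the correct comparison arrows. No new homological input beyond the preceding theorem and the locality lemma is required.
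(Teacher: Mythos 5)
Your proof is correct and follows essentially the same route as the paper: descend $\Hom_A({-},D^\bu)$ and $\Hom_{B^\rop}({-},D^\bu)$ directly to the unbounded derived categories using the exactness guaranteed by (i$^{\mathrm{s}}$), and then verify preservation of $\sD_\fp$ and biduality by reducing to finite (truncated) fragments and invoking the bounded theorem. You have simply spelled out in detail the truncation bookkeeping that the paper compresses into the remark that the verifications ``depend only on finite fragments of the complexes involved.''
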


\begin{proof}
 In the assumption~(i$^{\mathrm{s}}$), the functors
$\Hom_A({-},D^\bu)\:\Hot(A\modl)^\sop\rarrow\Hot(\modr B)$ and
$\Hom_{B^\rop}({-},D^\bu)\:\Hot(\modr B)^\sop\rarrow\Hot(A\modl)$ take
acyclic complexes to acyclic complexes and consequently induce
triangulated functors \hbadness=1100
\begin{align*}
 \Hom_A({-},D^\bu)\:\sD(A\modl)^\sop&\lrarrow\sD(\modr B) \\
\intertext{and}
 \Hom_{B^\rop}({-},D^\bu)\:\sD(\modr B)^\sop&\lrarrow\sD(A\modl).
\end{align*}
 It is not difficult to see that these two contravariant functors
are right adjoint to each other.
 Now verifying that these functors take the full subcategories
$\sD_\fp(A\modl)\subset\sD(A\modl)$ and $\sD_\fp(\modr B)\subset
\sD(\modr B)$ into each other and the adjunction morphisms are
quasi-isomorphisms for complexes from these subcategories
depends only on finite fragments of the complexes involved,
which makes these questions straightforward (and certainly easier
than the ones resolved in the previous proof).
\end{proof}

\begin{ex} \label{double-dualization-coalgebra-example}
 Let $K\rarrow A$ be an associative ring homomorphism such that $A$
is a finitely generated projective left $K$\+module.
 Set $\C=\Hom_K(A,K)$; then the map $\C\rarrow\C\ot_K\C$ dual to
the multiplication map $A\ot_KA\rarrow A$ endows the $K$\+$K$\+bimodule
$\C$ with the structure of a coassociative coring over~$K$.
 The $K$\+$K$\+bimodule $\C$ is a finitely generated projective right
$K$\+module by construction; suppose further that it is also
a finitely generated projective left $K$\+module.
 Set $B=\Hom_K(\C,K)$; then the map $B\ot_KB\rarrow B$ dual to
the comultiplication map $\C\rarrow\C\ot_K\C$ endows
the $K$\+$K$\+bimodule $B$ with the structure of an associative ring.
 The unit map/ring homomorphism $K\rarrow A$ is transformed into
a counit map $\C\rarrow K$ and into a unit map/ring homomorphism
$K\rarrow B$.

 The category of left $A$\+modules is isomorphic to the category of
left $\C$\+comodules, and the category of right $B$\+modules is
isomorphic to the category of right $\C$\+comodules.
 So, in particular, the natural $\C$\+$\C$\+bicomodule structure
on $\C$ can be viewed as an $A$\+$B$\+bimodule structure
(see~\cite[Section~10.1.1]{Psemi}; cf.~\cite[Section~3.2]{BP}).

 Assume that the ring $K$ is left and right coherent, and
suppose further that $K$ is a left and right Gorenstein ring, i.~e.,
$K$ has a finite injective dimension as a left and right module
over itself.
 Then the $K$\+$K$\+bimodule $K$ is a weak dualizing complex for
the rings $K$ and $K$ \cite[Example~2.3(a)]{YZ}, while
the $A$\+$B$\+bimodule $\C$ is a weak dualizing complex for
the rings $A$ and~$B$ (cf.~\cite[Section~5]{Yek}
and~\cite[Section~3.10]{Pkoszul}).
 When the ring $K$ is classically semisimple, the $A$\+$B$\+bimodule
$\C$ is even a strong dualizing complex for the rings $A$ and~$B$.
\end{ex}

\begin{ex}
 Let $K$ be a commutative ring and $A$ be an associative $K$\+algebra
(with unit).
 Assume that the ring $K$ is coherent and the ring $A$ is a finitely
generated projective $K$\+module.
 Let $D_K^\bu$ be a (weak or strong) dualizing complex for the ring
$K$, i.~e., a complex of $K$\+modules that, viewed as a complex of
$K$\+$K$\+bimodules, is a (weak or strong) dualizing complex for
the rings $K$ and~$K$.
 Then the complex of $A$\+$A$\+bimodules $\Hom_K(A,D_K^\bu)$ is
a (respectively, weak or strong) dualizing complex for the rings $A$
and $A$ (cf.~\cite[Example~3.8 and Corollary~5.6]{Yek}).
\end{ex}

\Section{Covariant Duality Theorem}  \label{covariant-duality}

 The aim of this section is to extend the noncommutative covariant
Serre--Grothendieck duality theory developed in
the papers~\cite{Jor,Kra,IK} from Noetherian to coherent rings.
 Here is our main definition in this context.

 Let $A$ be a left coherent ring and $B$ be a right coherent ring.
 A finite complex of $A$\+$B$\+bimodules $D^\bu$ is called
a \emph{dualizing complex} for the rings $A$ and $B$ if it satisfies
the following conditions:
\begin{enumerate}
\renewcommand{\theenumi}{\roman{enumi}}
\item the terms of the complex $D^\bu$ are fp\+injective left
$A$\+modules and fp\+injective right $B$\+modules;
\item the $A$\+$B$\+bimodules of cohomology $H^*(D^\bu)$ of
the complex $D^\bu$ are finitely presented left $A$\+modules and
finitely presented right $B$\+modules;
\item the homothety maps $A\rarrow\Hom_{\sD(\modr B)}(D^\bu,D^\bu[*])$ 
and $B^\rop\rarrow\Hom_{\sD(A\modl)}\allowbreak(D^\bu,D^\bu[*])$ are
isomorphisms of graded rings.
\end{enumerate}

 The conditions~(ii\+iii) are the same as in the definition of
a weak dualizing complex in Section~\ref{contravariant-duality}.
 The following lemma explains how the fp\+injectivity condition
in~(i) is to be used.
 For the rest of this section, we assume that the ring $A$ is
left coherent and the ring $B$ is right coherent.

\begin{lem}  \label{tensor-hom-flat-fp-injective}
\textup{(a)} Let $F$ be a flat left $B$\+module and $E$ be
an $A$\+fp\+injective $A$\+$B$\+bimodule.
 Then the tensor product $E\ot_BF$ is an fp\+injective left
$A$\+module. \par
\textup{(b)} Let $J$ be an injective left $A$\+module and
$E$ be a $B$\+fp\+injective $A$\+$B$\+bimodule.
 Then the left $B$\+module $\Hom_A(E,J)$ is flat.
\end{lem}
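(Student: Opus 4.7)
The plan is to apply Lazard's theorem to reduce to the finitely generated free case and then invoke Stenstr\"om's closure result for fp\+injectives under filtered inductive limits. Write the flat left $B$\+module $F$ as a filtered inductive limit $F=\varinjlim_\alpha B^{n_\alpha}$ of finitely generated free left $B$\+modules. Since tensor product commutes with inductive limits, we obtain an isomorphism of left $A$\+modules
\[
E\ot_BF \;\simeq\; \varinjlim\nolimits_\alpha (E\ot_B B^{n_\alpha}) \;\simeq\; \varinjlim\nolimits_\alpha E^{n_\alpha}.
\]
Each module $E^{n_\alpha}$ is a finite direct sum of copies of $E$, hence fp\+injective as a left $A$\+module (the class of fp\+injective modules over a left coherent ring is closed under finite direct sums). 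The class of fp\+injective left $A$\+modules is moreover closed under filtered inductive limits, so the colimit $E\ot_BF$ is fp\+injective. This part involves no real obstacle beyond correctly citing the closure properties recalled earlier in the section.

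\textbf{Plan for part (b).} I would prove flatness by checking $\Tor_1^B(M,\Hom_A(E,J))=0$ for every finitely presented right $B$\+module $M$, which suffices for flatness. The key step is a natural ``swap'' isomorphism
\[
 M\ot_B\Hom_A(E,J)\;\simeq\;\Hom_A(\Hom_{B^\rop}(M,E)\;J),
\]
valid for all finitely presented right $B$\+modules~$M$. To prove the isomorphism, fix a finite free presentation $B^n\to B^m\to M\to 0$, apply $\Hom_{B^\rop}({-},E)$ to obtain a left exact sequence ending in $\Hom_{B^\rop}(M,E)$, then apply $\Hom_A({-},J)$; injectivity of $J$ makes the latter functor exact, and the two ensuing cokernel presentations are tautologically compared with the one defining $M\ot_B\Hom_A(E,J)$.

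\textbf{Main step.} Now take any finitely presented right $B$\+module $M$ and resolve $0\to K\to B^m\to M\to 0$. Because $B$ is right coherent, the kernel~$K$ is itself finitely presented. Applying $\Hom_{B^\rop}({-},E)$, the $B$\+fp\+injectivity of $E$ ensures that $\Ext^1_{B^\rop}(M,E)=0$, so
\[
 0\rarrow \Hom_{B^\rop}(M,E)\rarrow E^m\rarrow \Hom_{B^\rop}(K,E)\rarrow 0
\]
is a short exact sequence of left $A$\+modules. Applying the exact functor $\Hom_A({-},J)$ and using the swap isomorphism for the finitely presented modules $K$ and $M$, we recover the short exact sequence
\[
 0\rarrow K\ot_B\Hom_A(E,J)\rarrow B^m\ot_B\Hom_A(E,J)\rarrow M\ot_B\Hom_A(E,J)\rarrow 0.
\]
Exactness of this sequence shows that $\Tor_1^B(M,\Hom_A(E,J))$ vanishes, completing the proof.

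\textbf{Where the difficulty lies.} Part (a) is essentially a routine colimit argument, and its only subtlety is the appeal to the fact that fp\+injectivity is preserved by filtered inductive limits. The main technical obstacle is the swap isomorphism in part (b): one has to verify that the comparison map is functorial and invokes exactly the three hypotheses $J$ injective, $M$ finitely presented, and $E$ fp\+injective on the right. Once this identification is in place, the right coherence of~$B$ (ensuring $K$ is finitely presented) and the vanishing $\Ext^1_{B^\rop}(M,E)=0$ do the rest of the work.
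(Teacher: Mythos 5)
Your proof is correct and matches the paper's approach: for part (a) you use the Govorov--Lazard plus closure-under-filtered-colimits route, which the paper explicitly offers as an alternative to the natural-map argument, and for part (b) you use exactly the paper's swap isomorphism $N\ot_B\Hom_A(E,J)\simeq\Hom_A(\Hom_{B^\rop}(N,E),J)$, filling in the Tor-vanishing details that the paper leaves to the reader. (One small correction to your commentary: the swap isomorphism itself uses only that $J$ is injective and $M$ is finitely presented; the fp\+injectivity of $E$ over $B$ enters only in the main step, via $\Ext^1_{B^\rop}(M,E)=0$.)
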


\begin{proof}
 In part~(a), one can check that the natural map
$\Hom_A(M,E)\ot_B F\rarrow \Hom_A(M\;E\ot_BF)$ is an isomorphism
for any finitely presented left $A$\+module~$M$.
 (Alternatively, one can use the Govorov--Lazard characterization
of flat modules as filtered inductive limits of finitely generated
projective ones together with the fact that the class of fp\+injective
left modules over a left coherent ring is closed under filtered
inductive limits.)
 In part~(b), one needs to show that the natural map
$N\ot_B\Hom_A(E,J)\rarrow\Hom_A(\Hom_{B^\rop}(N,E),J)$ is
an isomorphism for any finitely presented right $B$\+module~$N$.
\end{proof}

 The next lemma is a generalization of~\cite[Lemma~B.4.1]{Pcosh}.
 We assume that $D^\bu$ is a dualizing complex for the rings $A$
and~$B$.

\begin{lem}  \label{dualizing-covariant-quasi-isomorphisms}
\textup{(a)} Let $F$ be a flat left $B$\+module and $J^\bu$ be
a bounded below complex of injective left $A$\+modules endowed with
a quasi-isomorphism of complexes of left $A$\+modules
$D^\bu\ot_B F\rarrow J^\bu$.
 Then the natural morphism of complexes of left $B$\+modules
$F\rarrow\Hom_A(D^\bu,J^\bu)$ is a quasi-isomorphism. \par
\textup{(b)} Let $J$ be an injective left $A$\+module.
 Then the natural morphism of complexes of left $A$\+modules
$D^\bu\ot_B\Hom_A(D^\bu,J)\rarrow J$ is a quasi-isomorphism.
\end{lem}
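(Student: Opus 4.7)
The plan is to use Lemma~\ref{fin-gen-proj-complex} to pick a bounded above complex $P^\bu$ of finitely generated projective left $A$-modules together with a quasi-isomorphism $P^\bu\rarrow D^\bu$, which exists since the cohomology of $D^\bu$ is finitely presented over~$A$ by condition~(ii). Both parts of the lemma will pivot on this replacement: it permits switching freely between $\Hom_A(D^\bu,-)$ and $\Hom_A(P^\bu,-)$, and it lets tensor products commute with Hom because the $P^n$ are finitely generated projective.

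For part~(a), the key identity is that, $P^n$ being finitely generated projective, $\Hom_A(P^\bu,D^\bu)\ot_B F\cong\Hom_A(P^\bu,D^\bu\ot_B F)$ as complexes. Applying $\Hom_A(P^\bu,-)$ to the quasi-isomorphism $D^\bu\ot_B F\rarrow J^\bu$ (preserved because $J^\bu$ is bounded below and $P^\bu$ is a bounded above complex of projectives) and comparing with $\Hom_A(D^\bu,J^\bu)$ (quasi-isomorphic since $J^\bu$ is a bounded below complex of injectives), I reduce the claim to showing the natural map $F\rarrow\Hom_A(P^\bu,D^\bu)\ot_B F$ is a quasi-isomorphism. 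By condition~(iii), the complex $\Hom_A(P^\bu,D^\bu)$ computes $\boR\Hom_A(D^\bu,D^\bu)$, whose cohomology is $B$ concentrated in degree zero. Since $F$ is flat, the functor $-\ot_B F$ commutes with cohomology, delivering $F$ in degree zero; tracing the homothety map through the identifications shows it realizes the identity on $F$.

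For part~(b), I plan to reduce to part~(a) via adjunction. By Lemma~\ref{tensor-hom-flat-fp-injective}(b), $\Hom_A(D^\bu,J)$ is a bounded complex of flat left $B$-modules. A d\'evissage along the stupid filtration upgrades the unit quasi-isomorphism from~(a) to bounded complexes of flats: the natural map $F^\bu\rarrow\boR\Hom_A(D^\bu,D^\bu\ot_B F^\bu)$ is a quasi-isomorphism for every bounded complex $F^\bu$ of flat left $B$-modules. Applying $\boR\Hom_A(D^\bu,-)$ to the map in~(b), taking $F^\bu=\Hom_A(D^\bu,J)$, and using the triangle identity of the adjunction $(D^\bu\ot_B-,\boR\Hom_A(D^\bu,-))$, the induced arrow becomes the identity of $\Hom_A(D^\bu,J)$. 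Thus the cone $C^\bu$ of the counit satisfies $\boR\Hom_A(D^\bu,C^\bu)=0$, and by Lemma~\ref{tensor-hom-flat-fp-injective}(a) it is a bounded complex of fp-injective $A$-modules.

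The main obstacle is then to pass from this vanishing to $C^\bu=0$ in $\sD(A\modl)$. I intend to invoke the contravariant duality of Section~\ref{contravariant-duality}: since condition~(iii) yields $A\simeq\boR\Hom_{B^\rop}(D^\bu,D^\bu)$, a derived tensor-evaluation identity should provide an isomorphism
\[
\boR\Hom_A(A,C^\bu)\;\simeq\;\boR\Hom_A\bigl(\boR\Hom_{B^\rop}(D^\bu,D^\bu),\,C^\bu\bigr)\;\simeq\;D^\bu\ot_B\boR\Hom_A(D^\bu,C^\bu),
\]
so that the vanishing of $\boR\Hom_A(D^\bu,C^\bu)$ forces $C^\bu\simeq\boR\Hom_A(A,C^\bu)=0$. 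The delicate technical point will be establishing this tensor-evaluation identity without finite projectivity of $D^\bu$ as a right $B$-module, relying only on the fp-injectivity hypotheses and finite presentation of cohomology supplied by the dualizing conditions; this is where I expect the real work to sit.
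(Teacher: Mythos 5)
Your part~(a) is essentially the paper's argument: replace $D^\bu$ by a bounded above complex $P^\bu$ of finitely generated projective left $A$-modules, compare $\Hom_A(D^\bu,J^\bu)$ with $\Hom_A(P^\bu,J^\bu)$ using that $J^\bu$ is a bounded below complex of injectives, compare $\Hom_A(P^\bu\;D^\bu\ot_BF)$ with $\Hom_A(P^\bu,J^\bu)$ using that $P^\bu$ is a bounded above complex of projectives, and then reduce to the quasi-isomorphism $F\rarrow\Hom_A(P^\bu,D^\bu)\ot_BF$, which follows from condition~(iii) together with flatness of~$F$. That part is fine.

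Part~(b), though, has a genuine gap, and the route you sketch is both overcomplicated and incomplete. You reduce to showing the cone $C^\bu$ of the counit vanishes after you know $\boR\Hom_A(D^\bu,C^\bu)=0$, and for that you invoke a tensor-evaluation identity
$\boR\Hom_A\bigl(\boR\Hom_{B^\rop}(D^\bu,D^\bu),C^\bu\bigr)\simeq D^\bu\ot_B\boR\Hom_A(D^\bu,C^\bu)$.
But such a Hom-tensor swap is exactly what needs finitely generated projectivity of the terms of (a resolution of) $D^\bu$ on the appropriate side; you have only fp-injectivity of the terms and finite presentation of the cohomology, and nothing in the proposal supplies the identity. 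You acknowledge this is ``where the real work will sit,'' and indeed that work is not done, nor is it clear it can be done along this route without circularity: once one resolves $D^\bu$ by finitely generated projectives on the $B^\rop$-side to make the identity legal, the whole detour through conservativity becomes unnecessary.

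The paper's proof of~(b) is the exact mirror of~(a), resolving on the other side. Choose a bounded above complex ${}''\!\.D^\bu$ of finitely generated projective \emph{right} $B$-modules with a quasi-isomorphism ${}''\!\.D^\bu\rarrow D^\bu$. By Lemma~\ref{tensor-hom-flat-fp-injective}(b) the complex $\Hom_A(D^\bu,J)$ is a bounded complex of flat left $B$-modules, so ${}''\!\.D^\bu\ot_B\Hom_A(D^\bu,J)\rarrow D^\bu\ot_B\Hom_A(D^\bu,J)$ is a quasi-isomorphism; it therefore suffices to show the composition to $J$ is one. Since the terms of ${}''\!\.D^\bu$ are finitely generated projective, ${}''\!\.D^\bu\ot_B\Hom_A(D^\bu,J)\cong\Hom_A(\Hom_{B^\rop}({}''\!\.D^\bu,D^\bu),J)$. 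Finally, $A\rarrow\Hom_{B^\rop}({}''\!\.D^\bu,D^\bu)$ is a quasi-isomorphism of complexes of left $A$-modules by condition~(iii), and applying the exact functor $\Hom_A(-,J)$ (with $J$ injective) finishes the argument. No adjunction counit, no cone, no conservativity; just the same one-step resolution trick applied on the $B^\rop$-side rather than the $A$-side.
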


\begin{proof}
 Part~(a): let ${}'\!\.D^\bu$ be a bounded above complex of finitely
generated projective left $A$\+modules endowed with a quasi-isomophism
of complexes of left $A$\+modules ${}'\!\.D^\bu\rarrow D^\bu$.
 Then the natural morphism $\Hom_A(D^\bu,J^\bu)\rarrow
\Hom_A({}'\!\.D^\bu,J^\bu)$ is a quasi-isomorphism of complexes of
abelian groups, as is the natural morphism $\Hom_A({}'\!\.D^\bu\;
D^\bu\ot_BF)\rarrow\Hom_A({}'\!\.D^\bu,J^\bu)$.
 The square of morphisms of complexes of abelian groups $F\rarrow
\Hom_A(D^\bu,J^\bu)\rarrow\Hom_A({}'\!\.D^\bu,J^\bu)$ and $F\rarrow
\Hom_A({}'\!\.D^\bu\;D^\bu\ot_BF)\rarrow\Hom_A({}'\!\.D^\bu,J^\bu)$ is
commutative, so it suffices to show that the morphism $F\rarrow
\Hom_A({}'\!\.D^\bu\;D^\bu\ot_BF)$ is a quasi-isomorphism.
 Now the complex of abelian groups $\Hom_A({}'\!\.D^\bu\;
D^\bu\ot_BF)$ is isomorphic to $\Hom_A({}'\!\.D^\bu,D^\bu)
\ot_BF$, and it remains to recall that the map $B\rarrow
\Hom_A({}'\!\.D^\bu,D^\bu)$ induced by the right action of $B$ in
$D^\bu$ is a quasi-isomorphism of complexes of right $B$\+modules by
the assumption~(iii).

 Part~(b): let ${}''\!\.D^\bu$ be a bounded above complex of finitely
generated projective right $B$\+modules endowed with a quasi-isomophism
of complexes of right $B$\+modules ${}''\!\.D^\bu\rarrow D^\bu$.
 Then the natural morphism ${}''\!\.D^\bu\ot_B\Hom_A(D^\bu,J)\rarrow
D^\bu\ot_B\Hom_A(D^\bu,J)$ is a quasi-isomorphism of complexes of
abelian groups, and it suffices to show that the composition
${}''\!\.D^\bu\ot_B\Hom_A(D^\bu,J)\rarrow D^\bu\ot_B\Hom_A(D^\bu,J)
\rarrow J$ is also a quasi-isomorphism of complexes of abelian groups.
 Now the complex of abelian groups ${}''\!\.D^\bu\ot_B\Hom_A(D^\bu,J)$
is isomorphic to $\Hom_A(\Hom_{B^\rop}({}''\!\.D^\bu,D^\bu),J)$, and
it remains to recall that the map $A\rarrow\Hom_{B^\rop}({}''\!\.D^\bu,
D^\bu)$ is a quasi-isomorphism of complexes of left $A$\+modules
by the assumption~(iii).
\end{proof}

 The following result generalizes the first assertion
of~\cite[Proposition~1.5]{CFH} (see also~\cite[Corollary~B.4.2]{Pcosh}).
 It can be used in conjunction with
Proposition~\ref{cardinality-generators-ideals}.

\begin{prop}  \label{cfh-generalized}
 Let $D^\bu$ be a dualizing complex for a left coherent ring $A$ and
a right coherent ring~$B$.
 Then the supremum of projective dimensions of flat left $B$\+modules
cannot exceed the supremum of injective dimensions of fp\+injective
left $A$\+modules by more than the length of the complex~$D^\bu$.
\end{prop}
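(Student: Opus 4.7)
The plan is to exploit the duality implicit in $D^\bu$: the adjoint pair $D^\bu\ot_B{-}$ and $\Hom_A(D^\bu,{-})$ should translate a question about projective dimensions of flat left $B$\+modules into one about injective resolutions of the corresponding objects in $\sD(A\modl)$. Let $F$ be a flat left $B$\+module, let $d$ denote the supremum of injective dimensions of fp\+injective left $A$\+modules, and let $\ell$ denote the length of $D^\bu$; the aim is to show that the projective dimension of $F$ is at most $d+\ell$.

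First I would apply $D^\bu\ot_B{-}$ to $F$. By Lemma~\ref{tensor-hom-flat-fp-injective}(a), the complex $D^\bu\ot_BF$ is a bounded complex of fp\+injective left $A$\+modules occupying the same cohomological degrees as $D^\bu$; since each fp\+injective left $A$\+module has injective dimension at most~$d$, a Cartan--Eilenberg-style construction produces a quasi-isomorphism $D^\bu\ot_BF\rarrow J^\bu$ into a bounded complex $J^\bu$ of injective left $A$\+modules. Then by Lemma~\ref{dualizing-covariant-quasi-isomorphisms}(a) the natural morphism $F\rarrow\Hom_A(D^\bu,J^\bu)$ is a quasi-isomorphism in $\sD(B\modl)$, and by Lemma~\ref{tensor-hom-flat-fp-injective}(b) every term $\Hom_A(D^p,J^q)$ is a flat left $B$\+module. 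So $F$ becomes represented in the derived category by a bounded complex of flat left $B$\+modules built from the pair $(D^\bu,J^\bu)$.

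The final step is to extract the bound on projective dimension from this representation. For an arbitrary left $B$\+module $G$, I would compute $\Ext_B^n(F,G)$ using the quasi-isomorphism $F\simeq\Hom_A(D^\bu,J^\bu)$ together with the tensor--Hom adjunction $\Hom_B({-},\Hom_A(D^\bu,{-}))\simeq\Hom_A(D^\bu\ot_B{-},{-})$: the computation migrates to the side of $A$ and reduces to cohomology of a Hom complex into the injective resolution $J^\bu$, whose relevant cohomological range has length at most $d+\ell$, forcing the vanishing $\Ext_B^n(F,G)=0$ for $n>d+\ell$.

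The main obstacle is the bookkeeping in this last step: the total complex $\Hom_A(D^\bu,J^\bu)$ has naive spread $d+2\ell$, so the sharp bound $d+\ell$ does not follow from merely counting the terms of a flat resolution. Extracting it will require a careful use of the derived adjunction and the injectivity of $J^\bu$, converting the bound on the \emph{injective} dimension of $D^\bu\ot_BF$ on the side of $A$ directly into a bound on the \emph{projective} dimension of $F$ on the side of $B$, rather than translating through the length of an auxiliary flat complex.
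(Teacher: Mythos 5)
There is a genuine gap here, and it is structural rather than a matter of bookkeeping. You apply the ``de-dualizing'' functor to $F$, obtaining $F\simeq\Hom_A(D^\bu,J^\bu)$, a bounded complex of flat left $B$\+modules. But that representation by itself carries no information about projective dimension: it re-proves that $F$ has finite flat dimension, which is vacuous since $F$ is already flat, and turning a bounded complex of flat $B$\+modules into a bound on projective dimension would require already knowing that flat $B$\+modules have uniformly bounded projective dimension, which is exactly what is being proved. Moreover, the adjunction you invoke, $\Hom_B(X,\Hom_A(D^\bu,Y))\simeq\Hom_A(D^\bu\ot_BX,Y)$, requires the \emph{second} argument of $\Ext_B$ to be presented in the form $\Hom_A(D^\bu,Y)$; so the dualizing-complex machinery has to be applied to the test module $G$, not to $F$. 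With $G$ arbitrary you have no such presentation, and with $F$ in the form $\Hom_A(D^\bu,J^\bu)$ there is no adjunction that simplifies $\Hom_B(\Hom_A(D^\bu,J^\bu),G)$.

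The paper's proof runs the argument on the other side, and needs a preliminary reduction that your sketch omits: it suffices to prove $\Ext_B^{n+d+1}(F,G)=0$ for $F$ and $G$ both \emph{flat} (where $d$ is the length of $D^\bu$ and $n$ is the supremum of injective dimensions of fp\+injective left $A$\+modules). This reduction holds because the $(n+d)$\+th syzygy $H$ of a flat module $F$ is again flat, and a flat module $H$ with $\Ext_B^1(H,K)=0$ for the flat kernel $K$ of a free cover $P\twoheadrightarrow H$ splits off from $P$ and is therefore projective. With $G$ flat, one chooses a bounded complex $J^\bu$ of injective left $A$\+modules in degrees $[i,\,i+n+d]$ quasi-isomorphic to $D^\bu\ot_BG$; Lemma~\ref{dualizing-covariant-quasi-isomorphisms}(a) then says $G\rarrow\Hom_A(D^\bu,J^\bu)$ is a quasi-isomorphism of complexes of left $B$\+modules, so for a projective resolution $P_\bu\rarrow F$ one gets
$$
\Hom_B(P_\bu,G)\simeq\Hom_B(P_\bu,\Hom_A(D^\bu,J^\bu))
\simeq\Hom_A(D^\bu\ot_BP_\bu\;J^\bu)
\simeq\Hom_A(D^\bu\ot_BF\;J^\bu),
$$
the last step using that $F$ is flat and $J^\bu$ is a bounded complex of injectives. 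The final complex is concentrated in degrees $[-d,\,n+d]$, giving the vanishing. Note that the flatness of $F$ and the flatness of $G$ enter at different points and neither can be dispensed with; your version uses only the flatness of $F$ and feeds it into the wrong slot of the adjunction.
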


\begin{proof}
 Assume that the complex $D^\bu$ is concentrated in the cohomological
degrees from~$i$ to $i+d$, and the supremum of injective
dimensions of fp\+injective left $A$\+modules is a finite integer~$n$.
 In order to show that the projective dimension of any flat left
$B$\+module does not exceed $n+d$, it suffices to check that
$\Ext_B^{n+d+1}(F,G)=0$ for any flat left $B$\+modules $F$ and~$G$.
 Let $P_\bu$ be a left projective resolution of the $B$\+module $F$
and $J^\bu$ be a complex of injective left $A$\+modules concentrated
in the cohomological degrees from~$i$ to $i+n+d$ and endowed with
a quasi-isomorphism of complexes of left $A$\+modules $D^\bu\ot_B G
\rarrow J^\bu$.
 By Lemma~\ref{dualizing-covariant-quasi-isomorphisms}(a),
the natural map of complexes of abelian groups
$$
 \Hom_B(P_\bu,G)\lrarrow\Hom_B(P_\bu,\Hom_A(D^\bu,J^\bu))
$$
is a quasi-isomorphism.
 The right-hand side is isomorphic to the complex $\Hom_A(D^\bu\ot_B
P_\bu\;J^\bu)$, which is quasi-isomorphic to $\Hom_A(D^\bu\ot_BF\;J^\bu)$.
 The latter complex is concentrated in the degrees from~$-d$ to~$n+d$.
\hbadness=1200
\end{proof}

 Denote by $B\modl_\fl$ the exact category of flat left $B$\+modules
and by $B\modl_\proj$ the additive category of projective left
$B$\+modules.
 The following results, resembling Theorems~\ref{coderived-fp-injective}
and~\ref{coderived-homotopy-injectives}, were established in our
previous papers.

\begin{thm}  \label{contraderived-flat-projectives}
 Let $B$ be a right coherent ring.  Then \par
\textup{(a)} the triangulated functor between the contraderived
categories\/ $\sD^\ctr(B\modl_\fl)\rarrow\sD^\ctr(B\modl)$ induced by
the embedding of exact categories $B\modl_\fl\rarrow B\modl$ is
an equivalence of triangulated categories; \par
\textup{(b)} assuming that flat left $B$\+modules have finite
projective dimensions, the triangulated functor\/
$\Hot(B\modl_\proj)\rarrow\sD^\ctr(B\modl)$ induced by the embedding
of additive/exact categories $B\modl_\proj\rarrow B\modl$ is
an equivalence of triangulated categories.
\end{thm}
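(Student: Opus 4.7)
The plan is to deduce part (a) from the exact-category resolution principle, namely the contraderived dual of Proposition~\ref{coderived-right-resolutions}, and then to bootstrap from (a) to (b) using the finite projective dimension hypothesis to further collapse the contraderived category of flats onto the homotopy category of projectives.

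For part~(a), I would apply the dual statement of Proposition~\ref{coderived-right-resolutions}: if $\sF\subset\sE$ is a full subcategory closed under extensions, infinite products, and kernels of admissible epimorphisms, and every object of $\sE$ admits an admissible epimorphism from an object of $\sF$, then $\sD^\ctr(\sF)\rarrow\sD^\ctr(\sE)$ is an equivalence. Taking $\sE=B\modl$ and $\sF=B\modl_\fl$, I would verify the hypotheses one by one. Closure of $B\modl_\fl$ under extensions and under kernels of surjections between flat modules is classical. Every $B$\+module is a quotient of a free (hence flat) module, giving the admissible epimorphism condition. The crucial nontrivial point, which is precisely where right coherence of $B$ is used, is closure under infinite products: this is Chase's theorem, which characterizes right coherent rings as those over which arbitrary products of flat left modules remain flat. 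With these hypotheses verified, (a) follows.

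For part~(b), using~(a), it suffices to exhibit a triangulated equivalence $\Hot(B\modl_\proj)\simeq\sD^\ctr(B\modl_\fl)$. Under the assumption that flat left $B$\+modules have finite projective dimension, say at most~$n$, the exact category $B\modl_\fl$ has finite homological dimension, and a Cartan--Eilenberg type construction produces, for any complex of flat $B$\+modules $F^\bu$, a termwise projective resolution of uniformly bounded length whose totalization $P^\bu\rarrow F^\bu$ is a contraquasi-isomorphism (its cone is contraacyclic in $B\modl_\fl$, being bounded and acyclic in a category of finite homological dimension, hence absolutely acyclic by \cite[Remark~2.1]{Psemi} in spirit). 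Conversely, any complex of projectives that is contraacyclic in $B\modl_\fl$ must be contractible: this follows because Hom from a complex of projectives into any short exact sequence of flat modules gives an acyclic complex of abelian groups, so the relevant $\Hom$ groups in $\Hot(B\modl_\proj)$ agree with those in $\sD^\ctr(B\modl_\fl)$.

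The main obstacle I expect is the second half of the previous paragraph, that is, controlling contraacyclicity on the projective side. One must check not only that bounded projective resolutions of flats exist, but that the passage $\Hot(B\modl_\proj)\rarrow\sD^\ctr(B\modl_\fl)$ kills nothing and is essentially surjective. The essential surjectivity is handled by the termwise resolution trick above, but fullness and faithfulness hinge on a careful computation showing that $\Hom$ out of a complex of projectives sends short exact sequences of flats to acyclic complexes of abelian groups, and then invoking the finite homological dimension of $B\modl_\fl$ to conclude that every contraacyclic complex of projectives (viewed in $B\modl$) is in fact absolutely acyclic, hence contractible as a complex of projectives. This matches the mechanism used in~\cite[Section~3.8]{Pkoszul} and~\cite[Section~1.5]{Psing}, to which a clean proof can ultimately be reduced.
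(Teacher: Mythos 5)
Your proposal is correct and follows essentially the same route as the paper: part~(a) is exactly the dual of Proposition~\ref{coderived-right-resolutions} (equivalently, \cite[Proposition~A.3.1(b)]{Pcosh}), with Chase's theorem supplying closure of flats under products, which is precisely what the paper invokes; part~(b) reproduces the mechanism of \cite[Section~3.8]{Pkoszul}, to which the paper defers, via bounded termwise projective resolutions and the observation that a contraacyclic complex of projectives is absolutely acyclic (by finite homological dimension of $B\modl_\fl$) and hence contractible.
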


\begin{proof}
 Notice that the class of flat left modules over a right coherent ring
is closed under infinite products, so the triangulated functor in~(a)
is well-defined.
 The assertion of part~(a) is provided by~\cite[Remark~1.5]{Psing}
and/or~\cite[Proposition~A.3.1(b)]{Pcosh}.
 Part~(b) was proven in~\cite[Section~3.8]{Pkoszul}.
 In both cases, the same assertions also hold for the contraderived
category of left CDG\+modules over a CDG\+ring $(B,d,h)$ with
a right graded coherent underlying graded ring $B$, the contraderived
category of CDG\+modules with flat underlying graded modules, and
the homotopy category of CDG\+modules with projective underlying
graded modules.
\end{proof}

 The next theorem, generalizing~\cite[Theorem~4.8]{IK}
(see also~\cite[Corollary~B.4.10 and Theorem~D.2.5]{Pcosh}),
is one of the most important results of this paper.

\begin{thm}  \label{derived-co-contra}
 Let $D^\bu$ be a dualizing complex for a left coherent ring $A$ and
a right coherent ring~$B$.
 Assume that fp\+injective left $A$\+modules have finite injective
dimensions.
 Then there is an equivalence between the coderived category of
left $A$\+modules\/ $\sD^\co(A\modl)$ and the contraderived category
of left $B$\+modules\/ $\sD^\ctr(B\modl)$ provided by the mutually
inverse functors\/ $\boR\Hom_A(D^\bu,{-})$ and\/ $D^\bu\ot^\boL_B{-}$.
\end{thm}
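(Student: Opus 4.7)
My plan is to realize both derived categories via convenient homotopy-category models, define the chain-level functors $\Phi = D^\bu \ot_B {-}$ and $\Psi = \Hom_A(D^\bu,{-})$, obtain the tensor-Hom adjunction, and verify the counit and unit are isomorphisms using Lemma~\ref{dualizing-covariant-quasi-isomorphisms}, compact generation of $\sD^\co(A\modl)$, and the homothety axiom of the dualizing complex.

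\textbf{Setup and adjunction.} The hypothesis on fp\+injective dimensions together with Proposition~\ref{cfh-generalized} yields finite projective dimension of flat left $B$\+modules, so Theorems~\ref{coderived-homotopy-injectives} and~\ref{contraderived-flat-projectives}(b) give the working models $\sD^\co(A\modl) \simeq \Hot(A\modl_\inj)$ and $\sD^\ctr(B\modl) \simeq \Hot(B\modl_\proj)$. By Lemma~\ref{tensor-hom-flat-fp-injective} and the finite length of $D^\bu$, the functor $\Phi$ carries complexes of flat $B$\+modules to complexes of fp\+injective $A$\+modules, and $\Psi$ carries complexes of injective $A$\+modules to complexes of flat $B$\+modules. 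Composing with the equivalences of Theorems~\ref{coderived-fp-injective} and~\ref{contraderived-flat-projectives}(a) produces functors $\Phi\: \sD^\ctr(B\modl) \rarrow \sD^\co(A\modl)$ and $\Psi\: \sD^\co(A\modl) \rarrow \sD^\ctr(B\modl)$ realizing the claimed derived functors $D^\bu \ot^\boL_B {-}$ and $\boR\Hom_A(D^\bu,{-})$; the chain-level tensor-Hom adjunction descends to an adjunction $\Phi \dashv \Psi$.

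\textbf{Counit.} For $J^\bu \in \Hot(A\modl_\inj)$, I would show the counit $\Phi\Psi(J^\bu) = D^\bu \ot_B \Hom_A(D^\bu,J^\bu) \rarrow J^\bu$ is an isomorphism by appealing to the compact generation of $\sD^\co(A\modl)$ by $\sD^\b(A\modl_\fp)$ (Corollary~\ref{compact-generators}(b)) together with Lemma~\ref{coderived-hom-computing}(a). This reduces the problem to showing that applying $\Hom_A(P^\bu,{-})$ yields a quasi-isomorphism of complexes of abelian groups for every $P^\bu \in \sD^\b(A\modl_\fp)$. Because the finite length of $D^\bu$ ensures that only a finite fragment of the complexes contributes at each cohomological degree, one further reduces to $P^\bu = A$, at which point the desired statement is exactly Lemma~\ref{dualizing-covariant-quasi-isomorphisms}(b) applied termwise to $J^\bu$ and totalized via a brutal-truncation argument on $D^\bu$.

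\textbf{Unit and main obstacle.} For $F^\bu \in \Hot(B\modl_\proj)$, I would prove the unit $F^\bu \rarrow \Psi\Phi(F^\bu)$ is an isomorphism by the following projective-resolution trick: choose a bounded above complex ${}'\!\.D^\bu$ of finitely generated projective left $A$\+modules with a quasi-isomorphism ${}'\!\.D^\bu \rarrow D^\bu$, available from Lemma~\ref{fin-gen-proj-complex} (applicable by axiom~(ii) of the dualizing complex). Computing $\boR\Hom_A(D^\bu, D^\bu \ot_B F^\bu)$ via this resolution of the first argument yields $\Hom_A({}'\!\.D^\bu, D^\bu \ot_B F^\bu) \cong \Hom_A({}'\!\.D^\bu, D^\bu) \ot_B F^\bu$ by tensor evaluation for finitely generated projectives, and the homothety axiom~(iii) identifies $\Hom_A({}'\!\.D^\bu, D^\bu)$ with $B$ in the derived category, so tensoring with the flat complex $F^\bu$ recovers $F^\bu$. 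Hence the derived unit is a quasi-isomorphism of $B$\+complexes; since both sides are complexes of flat $B$\+modules, the cone is an acyclic complex of flats, which by Proposition~\ref{cfh-generalized} together with~\cite[Remark~2.1]{Psemi} is contraacyclic in $B\modl_\fl$, making the unit an isomorphism in $\sD^\ctr(B\modl)$. The main technical hurdle is this final step---ensuring the acyclic complex of flats is actually acyclic in the exact category $B\modl_\fl$ so that the finite-homological-dimension machinery applies---together with the bookkeeping needed to identify the projective-resolution computation of $\boR\Hom_A(D^\bu,{-})$ with the injective-resolution model implicit in the definition of $\Psi$.
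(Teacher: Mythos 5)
Your setup and the identification of the functors is essentially the paper's (the paper works with $\sD^\ctr(B\modl_\fl)$ rather than $\Hot(B\modl_\proj)$, but this is cosmetic). The real problems are in the verification of the unit and counit.

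\textbf{Counit.} The paper does \emph{not} use compact generation here; it checks directly that for a complex $J^\bu$ of injective $A$\+modules, the cone of $D^\bu\ot_B\Hom_A(D^\bu,J^\bu)\rarrow J^\bu$ is the totalization of a bicomplex each of whose rows (the slice over a fixed degree of $J^\bu$, via Lemma~\ref{dualizing-covariant-quasi-isomorphisms}(b)) is a \emph{finite} acyclic complex of fp\+injective $A$\+modules, hence the total complex is absolutely acyclic in $A\modl_\fpi$. Your compact-generation route could in principle work, but your ``reduction to $P^\bu=A$'' is not a valid step: knowing that the cone is acyclic as a complex of abelian groups does not control $\Hom_A(P^\bu,\text{cone})$ for other $P^\bu$, and an acyclic unbounded complex is not thereby coacyclic. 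What actually works is to note that the cone has fp\+injective terms and, columnwise, is a finite acyclic complex of fp\+injectives, so $\Hom_A(P^\bu,{-})$ applied to each column is acyclic (since $\Ext^{>0}_A(\mathrm{fp},\mathrm{fp\text{-}inj})=0$), and then totalize using the finite width coming from $D^\bu$. You should replace the ``$P^\bu=A$'' claim with this argument, or just use the paper's direct argument.

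\textbf{Unit.} Your projective-resolution argument has two genuine gaps, one of which you flag. First, ${}'\!\.D^\bu$ from Lemma~\ref{fin-gen-proj-complex} is only bounded \emph{above}, so $\Hom_A({}'\!\.D^\bu,D^\bu)$ is bounded below but not finite; tensoring it against the unbounded complex $F^\bu$ produces a bicomplex unbounded in both directions, and there is no easy reason the totalized map $F^\bu\rarrow\Hom_A({}'\!\.D^\bu,D^\bu)\ot_BF^\bu$ should be a quasi-isomorphism, let alone have an absolutely acyclic cone. Second, even granting a quasi-isomorphism, an acyclic complex of flat $B$\+modules need not be acyclic \emph{in the exact category} $B\modl_\fl$ (the syzygies need not be flat), so the passage from acyclic to contraacyclic via finite homological dimension does not go through as written; you identify this yourself as ``the main technical hurdle,'' but do not resolve it. In addition, the identification of $\Hom_A({}'\!\.D^\bu, D^\bu\ot_BF^\bu)$ with the injective-model $\Psi\Phi(F^\bu)$ inside $\sD^\ctr(B\modl)$ (as opposed to the ordinary derived category) also needs justification. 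The paper avoids all of this by a different construction: for each $i$ one chooses a \emph{finite} complex of injective $A$\+modules $J^{i,\bu}$ with a quasi-isomorphism $D^\bu\ot_BF^i\rarrow J^{i,\bu}$, with the $j$\+width uniformly bounded (the hypothesis that fp\+injectives have finite injective dimension is used precisely here). Then Lemma~\ref{dualizing-covariant-quasi-isomorphisms}(a) gives, at each fixed $i$, a quasi-isomorphism $F^i\rarrow\Hom_A(D^\bu,J^{i,\bu})$ of uniformly bounded complexes of flat $B$\+modules; the cone of the totalized map is therefore an absolutely acyclic complex of flat $B$\+modules by a finite filtration argument, which is exactly the contraacyclicity you need. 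You should adopt this construction rather than the projective-resolution trick, since the latter does not keep the complexes finite.
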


\begin{proof}
 The derived functor $\boR\Hom_A(D^\bu,{-})\:\sD^\co(A\modl)\rarrow
\sD^\ctr(B\modl)$ is constructed by identifying the coderived
category $\sD^\co(A\modl)$ with the homotopy category
$\Hot(A\modl_\inj)$ and applying the functor $\Hom_A(D^\bu,{-})$
to complexes of injective left $A$\+modules.
 By Lemma~\ref{tensor-hom-flat-fp-injective}(b), this produces
complexes of flat left $B$\+modules.

 The derived functor $D^\bu\ot^\boL_B{-}\:\sD^\ctr(B\modl)\rarrow
\sD^\co(A\modl)$ is obtained by identifying the contraderived
category $\sD^\ctr(B\modl)$ of arbitrary left $B$\+modules with
the contraderived category $\sD^\ctr(B\modl_\fl)$ of flat left
$B$\+modules and applying the functor $D^\bu\ot_B{-}$ to
complexes of flat left $B$\+modules.
 By Lemma~\ref{tensor-hom-flat-fp-injective}(a), this produces
complexes of fp\+injective left $A$\+modules.
 Since the category of flat left $B$\+modules has finite homological
dimension by the assumption of Theorem in view of
Proposition~\ref{cfh-generalized}, any contraacyclic complex of
flat left $B$\+modules is absolutely acyclic with respect to
the exact category of flat left $B$\+modules, and the functor
$D^\bu\ot_B{-}$ transforms such a complex into an absolutely acyclic
complex of fp\+injective left $A$\+modules; so the derived
functor $D^\bu\ot^\boL_B{-}$ is well-defined.

 It is clear that the derived functor $D^\bu\ot^\boL_B{-}$ is left
adjoint to the derived functor $\boR\Hom_A(D^\bu,{-})$, so it remains
to show that the adjunction morphisms are isomorphisms in
$\sD^\co(A\modl)$ and $\sD^\ctr(B\modl)$.
 Let $J^\bu$ be a complex of injective left $A$\+modules.
 Then by Lemma~\ref{dualizing-covariant-quasi-isomorphisms}(b)
the cone of the morphism of complexes of fp\+injective left
$A$\+modules $D^\bu\ot_B\Hom_A(D^\bu,J^\bu)\rarrow J^\bu$ is the total
complex of a finite acyclic complex of complexes of fp\+injective left
$A$\+modules, hence a coacyclic (and even an absolutely acyclic)
complex of fp\+injective left $A$\+modules.

 Let $F^\bu$ be a complex of flat left $B$\+modules.
 Consider the bicomplex of fp\+injective left $A$\+modules $D^j\ot_BF^i$
and pick a bicomplex of injective left $A$\+modules $J^{ij}$ together
with a morphism of bicomplexes of $A$\+modules $D^\bu\ot_BF^\bu\rarrow
J^{\bu\bu}$ such that the bicomplex $J^{\bu\bu}$ is concentrated in
a finite interval of gradings~$j$ and for every index~$i$ the morphism
of complexes $D^\bu\ot_B F^i\rarrow J^{i,\bu}$ is a quasi-isomorphism.
 To obtain such a bicomplex $J^{\bu\bu}$ one can, e.~g., use a functorial
injective resolution construction in the category of left $A$\+modules,
or a construction of embedding of an arbitrary bicomplex in an abelian
category with enough injectives into a bicomplex of injective objects.
 Denote simply by $\Hom_A(D^\bu,J^{\bu\bu})$ the totalization of
the tricomplex $\Hom_A(D^k,J^{ij})$ along the pair of indices $(j,k)$;
then, by Lemma~\ref{dualizing-covariant-quasi-isomorphisms}(a),
the natural morphism of bicomplexes of flat left $B$\+modules
$F^\bu\rarrow\Hom_A(D^\bu,J^{\bu\bu})$ is a quasi-isomorphism of
finite (and uniformly bounded) complexes at every fixed degree~$i$.
 Now the cone of the natural morphism between the totalizations of
the bicomplexes $D^\bu\ot_BF^\bu$ and $J^{\bu\bu}$ is an absolutely
acyclic complex of fp\+injective left $A$\+modules, and the cone
of the natural morphism from the complex $F^\bu$ to the total
complex of $\Hom_A(D^\bu,J^{\bu\bu})$ is an absolutely acyclic
complex of flat left $B$\+modules.
\end{proof}

\Section{Semiderived Categories and Relative Dualizing Complexes}
\label{semiderived-categories}

 The aim of this section is to define the notion of a \emph{relative
dualizing complex} for a pair of homomorphisms of noncommutative
rings $A\rarrow R$ and $B\rarrow S$, and obtain a related covariant
equivalence between the \emph{semiderived categories} of modules.
 One can say that a relative dualizing complex is ``a dualizing
complex in the direction of $A$ and $B$, and a dedualizing complex
in the direction of $R$ relative to $A$ and $S$ relative to $B$''
(see the paper~\cite{Pmgm} for a discussion of dedualizing complexes).
 The resulting equivalence of semiderived categories resembles
the \emph{derived semimodule-semicontramodule correspondence}
of~\cite[Sections~0.3.7 and~6.3]{Psemi}.

 The following definition is to be compared with those
in~\cite[Sections~0.3.3 and~2.3]{Psemi}; see
also~\cite[Proposition~2.1.1(2)]{Bec}.
 Let $A\rarrow R$ be a morphism of associative rings.
 The \emph{$R/A$\+semicoderived category} $\sD^\sico_A(R\modl)$
of left $R$\+modules is defined as the quotient category of
the homotopy category of left $R$\+modules $\Hot(R\modl)$ by its
thick subcategory of complexes of $R$\+modules \emph{that are
coacyclic as complexes of $A$\+modules}.
 Similarly, assuming that the ring $A$ is left coherent,
the $R/A$\+semicoderived category of $A$\+fp\+injective
left $R$\+modules $\sD^\sico_A(R\modl_{A\dfpi})$ is defined as
the quotient category of the homotopy category of (complexes of)
$A$\+fp\+injective left $R$\+modules $\Hot(R\modl_{A\dfpi})$ by
its thick subcategory of complexes that are \emph{coacyclic in
the exact category of fp\+injective left $A$\+modules}.

 The next definition is to be compared with those
in~\cite[Sections~0.3.6 and~4.3]{Psemi}; see
also~\cite[Proposition~2.1.1(1)]{Bec}.
 Let $B\rarrow S$ be a morphism of associative rings.
 The \emph{$S/B$\+semicontraderived category} $\sD^\sictr_B(S\modl)$
of left $S$\+modules is defined as the quotient category of
the homotopy category of left $S$\+modules $\Hot(S\modl)$ by its
thick subcategory of complexes of $S$\+modules \emph{that are
contraacyclic as complexes of $B$\+modules}.
 Similarly, assuming that the ring $B$ is right coherent,
the $S/B$\+semicontraderived category of $B$\+flat left
$S$\+modules $\sD^\sictr_B(S\modl_{B\dfl})$ is defined as
the quotient category of the homotopy category of (complexes of)
$B$\+flat left $S$\+modules $\Hot(S\modl_{B\dfl})$ by its
thick subcategory of complexes that are \emph{contraacyclic in
the exact category of flat left $B$\+modules}.

\begin{thm}  \label{semiderived-fp-injective-flat}
\textup{(a)} Let $A\rarrow R$ be a morphism of associative rings;
assume that the ring $A$ is left coherent and the ring $R$ is
a flat right $A$\+module.
 Then the triangulated functor between the semicoderived categories
$\sD^\sico_A(R\modl_{A\dfpi})\rarrow\sD^\sico_A(R\modl)$ induced
by the embeddings of exact categories $A\modl_\fpi\rarrow A\modl$
and $R\modl_{A\dfpi}\rarrow R\modl$ is an equivalence of
triangulated categories. \par
\textup{(b)} Let $B\rarrow S$ be a morphism of associative rings;
assume that the ring $B$ is right coherent and the ring $S$ is
a flat left $B$\+module.
 Then the triangulated functor between the semicontraderived categories 
$\sD^\sictr_B(S\modl_{B\dfl})\rarrow\sD^\sictr_B(S\modl)$ induced
by the embeddings of exact categories $B\modl_\fl\rarrow B\modl$
and $S\modl_{B\dfl}\rarrow S\modl$ is an equivalence of
triangulated categories.
\end{thm}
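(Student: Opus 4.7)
The plan for part~(a) is to mimic the proof of Theorem~\ref{coderived-fp-injective} in the semicoderived setting, replacing the direct application of Proposition~\ref{coderived-right-resolutions} by an analogous resolution argument relativized over~$A$. The key new input is a coinduction lemma: if $J$ is an injective left $A$-module, then the coinduced $R$-module $\Hom_A({}_AR,J)$, with $R$-action $(r\cdot f)(r')=f(r'r)$, is injective as a left $A$-module when its $A$-module structure is restricted along $A\rarrow R$. The proof combines the standard adjunction $\Hom_A(N,\Hom_A(R,J))\cong\Hom_A(R\ot_AN,J)$ for $N\in A\modl$ with the flatness of $R$ as a right $A$-module: flatness makes $R\ot_A-$ exact on $A\modl$, so for any projective $A$-resolution $P_\bu\rarrow N$ the complex $R\ot_AP_\bu\rarrow R\ot_AN$ remains exact, and injectivity of $J$ then yields $\Ext_A^i(N,\Hom_A(R,J))=0$ for all $i>0$ and arbitrary~$N$. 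Since every left $A$-module embeds into an injective one, every left $R$-module $M$ thus admits an $R$-module embedding into an $A$-injective $R$-module: pick an $A$-module embedding $M\hookrightarrow J$ into an injective left $A$-module, and form the composite $M\rarrow\Hom_A(R,M)\rarrow\Hom_A(R,J)$, where the first arrow is the unit of adjunction $m\mapsto(r\mapsto rm)$, injective since evaluation at $1\in R$ gives an $A$-linear left-inverse.

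Applying this construction functorially to each term of a complex $C^\bu$ of left $R$-modules and iterating, I would produce a bicomplex $E^{\bu,\bu}$ of $A$-injective left $R$-modules (with $E^{n,m}$ defined for $m\ge0$) whose rows $C^n\rarrow E^{n,0}\rarrow E^{n,1}\rarrow\dotsb$ are exact coresolutions in $A\modl$. The cone of the induced morphism $C^\bu\rarrow\mathrm{Tot}(E^{\bu,\bu})$ is the totalization of a bicomplex of $A$-modules with horizontally bounded-below exact rows, which I claim is coacyclic in $A\modl$; the argument reduces this to the basic fact that the totalization of a short exact sequence of complexes is coacyclic, using closure of the coacyclic class under shifts and direct sums. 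This establishes essential surjectivity. For full faithfulness, I would invoke Proposition~\ref{coderived-right-resolutions} applied to $A\modl_\fpi\subset A\modl$: the resulting equivalence $\sD^\co(A\modl_\fpi)\simeq\sD^\co(A\modl)$ immediately implies that any complex of fp-injective $A$-modules that is coacyclic in $A\modl$ is already coacyclic in $A\modl_\fpi$, which is exactly what is needed to match the two quotient categories at the $A$-module level.

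Part~(b) is formally dual, with coinduction and injectivity replaced by induction and flatness, and Theorem~\ref{contraderived-flat-projectives}(a) playing the role of Proposition~\ref{coderived-right-resolutions}. The analog of the coinduction lemma is that for any flat left $B$-module $F$, the induced left $S$-module $S\ot_BF$ is flat as a left $B$-module, which is immediate from the associativity isomorphism $N\ot_B(S\ot_BF)\cong(N\ot_BS)\ot_BF$ together with the flatness of $S$ as a left $B$-module. Consequently every left $S$-module $M$ is the target of a surjection $S\ot_BF\twoheadrightarrow M$ from a $B$-flat $S$-module, where $F$ is any projective left $B$-module mapping onto~$M$. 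A dual bicomplex argument, using $B$-flat $S$-module left resolutions and direct-product totalizations, produces the required morphisms $F^\bu\rarrow C^\bu$ with $B$-contraacyclic cones. The main technical obstacle in both parts is the totalization step: one needs to confirm that the totalization of an exact bicomplex with horizontally bounded-below rows actually lies in the class of coacyclic, respectively contraacyclic, complexes in the sense of~\cite[Appendix~A]{Pcosh}, which is straightforward but requires careful bookkeeping when $C^\bu$ is unbounded.
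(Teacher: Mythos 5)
Your argument is correct and mirrors the paper's proof: you build, for each complex of $R$-modules (or $S$-modules in part~(b)), a termwise embedding into (or surjection from) a complex of $A$-injective (or $B$-flat) modules, iterate, totalize by direct sums (or products), and match the thick subcategories via the identification of coacyclicity in $A\modl$ with coacyclicity in $A\modl_\fpi$. The only variation is the construction of the termwise embedding: you pass to the coinduced module $\Hom_A(R,J)$ for $J$ an injective $A$-module and verify $A$-injectivity via the adjunction and flatness of $R$ over $A$, whereas the paper observes directly that every injective $R$-module is already $A$-injective (restriction of scalars being right adjoint to the exact functor $R\ot_A{-}$) and embeds into an injective $R$-module --- both routes rest on the same flatness input and yield the same class of resolving objects.
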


\begin{proof}
 Part~(a): in view of~\cite[Lemma~1.6]{Pkoszul}, it suffices to
show that for any complex of left $R$\+modules $M^\bu$ there
exists a complex of $A$\+fp\+injective left $R$\+modules $J^\bu$
together with a morphism of complexes of $R$\+modules $M^\bu\rarrow
J^\bu$ with a cone coacyclic as a complex of left $A$\+modules.
 Indeed, since $R$ is a flat right $A$\+module, any injective
left $R$\+module is also an injective left $A$\+module.
 Hence any complex of left $R$\+modules $M^\bu$ can be embedded into
a complex of $A$\+fp\+injective left $R$\+modules.
 The quotient complex can be also similarly embedded, etc.
 Totalizing the bicomplex constructed in this way by taking
infinite direct sums along the diagonals, one obtains the desired
complex of $A$\+fp\+injective left $R$\+modules~$J^\bu$;
the cone of the natural morphism $M^\bu\rarrow J^\bu$ is even coacyclic
as a complex of left $R$\+modules~\cite[Lemma~2.1]{Psemi}.

 Part~(b): it suffices to show that for any complex of left $S$\+modules
$N^\bu$ there exists a complex of $B$\+flat left $S$\+modules $F^\bu$
together with a morphism of complexes of $S$\+modules $F^\bu\rarrow
N^\bu$ with a cone contraacyclic as a complex of left $B$\+modules.
 Indeed, since $S$ is a flat left $B$\+module, any flat left
$S$\+module is also flat over~$B$.
 Hence any complex of left $S$\+modules $N^\bu$ is the image of
a surjective morphism from a complex of $B$\+flat left $S$\+modules.
 The kernel can be also presented as such an image, etc.
 Totalizing the bicomplex constructed in this way by taking
infinite products along the diagonals, one obtains the desired
complex of $B$\+flat left $S$\+modules~$F^\bu$;
the cone of the natural morphism $F^\bu\rarrow N^\bu$ is even
contraacyclic as a complex of left
$S$\+modules~\cite[Section~A.3]{Pcosh}.
\end{proof}

 Let us denote by $\sD(R\modl_{A\dinj})$ the quotient category of
the homotopy category of (complexes of) $A$\+injective left
$R$\+modules by the thick subcategory of complexes that are
\emph{contractible as complexes of left $A$\+modules}.
 Similarly, denote by $\sD(S\modl_{B\dproj})$ the quotient
category of the homotopy category of $B$\+projective left
$R$\+modules by the thick subcategory of complexes that are
\emph{contractible as complexes of left $B$\+modules}.
 Notice that the triangulated categories $\sD(R\modl_{A\dinj})$
and $\sD(S\modl_{B\dproj})$ are the \emph{conventional} derived
categories of the exact categories of $A$\+injective left
$R$\+modules and $B$\+projective left $S$\+modules $R\modl_{A\dinj}$
and $S\modl_{B\dproj}$ (the ``coderived category along~$A$'' and
the ``contraderived category along~$B$'' tokens are expressed
in the passages from the abelian category $R\modl$ to its exact
subcategory $R\modl_{A\dinj}$ and from the abelian category
$S\modl$ to its exact subcategory $S\modl_{B\dproj}$).

 The following result provides an interpretation of the semiderived
categories $\sD^\sico_A(R\modl)$ and $\sD^\sictr_B(R\modl)$ in
the spirit of the definitions of the coderived and contraderived
categories as ``homotopy categories of complexes of injectives''
and ``homotopy categories of complexes of projectives''
(as in~\cite{Jor,Kra,IK} and~\cite{Bec,Sto}).

\begin{thm}  \label{semiderived-injective-projective}
\textup{(a)} Let $A\rarrow R$ be a morphism of associative rings;
assume that the ring $A$ is left coherent, the ring $R$ is a flat
right $A$\+module, and all fp\+injective left $A$\+modules have
finite injective dimensions.
 Then the triangulated functor between the (semico)derived categories\/
$\sD(R\modl_{A\dinj})\rarrow\sD^\sico_A(R\modl)$ induced by
the embeddings of exact categories $A\modl_\inj\rarrow A\modl$ and
$R\modl_{A\dinj}\rarrow R\modl$ is an equivalence of triangulated
categories. \par
\textup{(b)} Let $B\rarrow S$ be a morphism of associative rings;
assume that the ring $B$ is right coherent, the ring $S$ is
a projective left $B$\+module, and all flat left $B$\+modules have
finite projective dimensions.
 Then the triangulated functor between the (semicontra)derived
categories\/ $\sD(S\modl_{B\dproj})\rarrow\sD^\sictr_B(S\modl)$
induced by the embeddings of exact categories $B\modl_\proj\rarrow
B\modl$ and $S\modl_{B\dproj}\rarrow S\modl$ is an equivalence of
triangulated categories.
\end{thm}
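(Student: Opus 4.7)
I focus on part~(a); part~(b) is formally dual, using Theorem~\ref{semiderived-fp-injective-flat}(b) and Theorem~\ref{contraderived-flat-projectives}(b), with induction $S\ot_B({-})$ in place of coinduction $\Hom_A(R,{-})$ and finite-length $B$\+projective resolutions in place of finite-length $A$\+injective ones. The goal is to show that the inclusion $\Hot(R\modl_{A\dinj})\subset\Hot(R\modl)$ descends to an equivalence $\sD(R\modl_{A\dinj})\simeq\sD^\sico_A(R\modl)$.

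First, a complex of $A$\+injective left $R$\+modules that is $A$\+coacyclic is automatically $A$\+contractible: upon restriction of scalars it becomes a complex of injective $A$\+modules, and by Theorem~\ref{coderived-homotopy-injectives} (applicable thanks to the finite $A$\+injective dimension hypothesis) $\Hot(A\modl_\inj)=\sD^\co(A\modl)$, so an $A$\+coacyclic object there vanishes, i.e., is $A$\+contractible. This identifies the kernel of the natural functor $\Hot(R\modl_{A\dinj})\rarrow\sD^\sico_A(R\modl)$ with the thick subcategory of $A$\+contractible complexes, which is precisely what is quotiented out to form $\sD(R\modl_{A\dinj})$.

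The main construction is a right $A$\+injective resolution in the semicoderived category: every complex $M^\bu$ of left $R$\+modules admits a morphism $M^\bu\rarrow K^\bu$ into a complex of $A$\+injective left $R$\+modules with $A$\+coacyclic cone. Apply Theorem~\ref{semiderived-fp-injective-flat}(a) to reduce to the case that $M^\bu=J^\bu$ is a complex of $A$\+fp\+injective left $R$\+modules. For any injective $A$\+module $I$, the coinduced $R$\+module $\Hom_A(R,I)$ is $R$\+injective by adjunction and, since $R$ is $A$\+flat, also $A$\+injective. Choosing a functorial $A$\+injective envelope $E({-})$, the natural $R$\+linear map $N\rarrow\Hom_A(R,E(N))$ embeds any $R$\+module $N$ into an $A$\+injective $R$\+module. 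Iterating this embedding on cokernels, and using the closure of fp\+injectivity under cokernels of admissible monomorphisms, each $J^i$ acquires a functorial $R$\+module resolution $J^i\rarrow K^{i,0}\rarrow\cdots\rarrow K^{i,n}$ of length~$n$ by $A$\+injective $R$\+modules, where $n$ bounds the $A$\+injective dimension of fp\+injective $A$\+modules: the terminal cokernel $K^{i,n}$ is then $A$\+injective by the classical cosyzygy argument applied to the underlying $A$\+module exact sequence. Totalizing the resulting bicomplex (finite in the resolution direction) yields a $K^\bu$ with the desired properties; the cone of $J^\bu\rarrow K^\bu$ is a uniformly bounded $A$\+acyclic complex of $A$\+fp\+injective $R$\+modules, which is absolutely acyclic in $A\modl_\fpi$ because this exact category has finite homological dimension under our hypothesis, and so lies in the $A$\+coacyclic subcategory of $\Hot(R\modl)$.

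With the right resolution established, the equivalence $\sD(R\modl_{A\dinj})\simeq\sD^\sico_A(R\modl)$ follows by a Verdier localization argument parallel to the proof of Proposition~\ref{coderived-right-resolutions}: essential surjectivity is immediate from the resolution, and for full faithfulness one represents a morphism in $\sD^\sico_A(R\modl)$ between objects $K^\bu,L^\bu\in\Hot(R\modl_{A\dinj})$ by a right fraction $K^\bu\rarrow Y^\bu\larrow L^\bu$ in $\Hot(R\modl)$ and refines its middle term via the resolution $Y^\bu\rarrow L_Y^\bu$, observing that $L^\bu\rarrow L_Y^\bu$ automatically has cone in the $A$\+contractible subcategory because $\Hot(R\modl_{A\dinj})$ is closed under cones of morphisms between its objects. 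Injectivity of the Hom-map is verified by a parallel refinement of the ``denominator'' of a zero morphism. The main obstacle I anticipate is ensuring the terminal-cokernel injectivity in the finite $R$\+module resolution step; this reduces to classical syzygy homological algebra in $A\modl$ upon noting that our $R$\+module resolution remains exact as an $A$\+module complex and that $J^i$ has $A$\+injective dimension at most~$n$ by hypothesis.
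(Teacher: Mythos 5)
Your proof is correct and takes essentially the same route as the paper. The paper's proof of part~(a) reduces, via Theorem~\ref{semiderived-fp-injective-flat}(a), to showing that any complex of $A$\+fp\+injective left $R$\+modules admits a morphism into a complex of $A$\+injective left $R$\+modules with $A$\+coacyclic cone, and then refers to the ``finite resolution argument'' of~\cite[Sections~3.6--3.7]{Pkoszul} and~\cite[Section~A.5]{Pcosh} for that step; you perform the same reduction and then supply a concrete realization of that finite resolution by iterating the coinduction embedding $N\rarrow\Hom_A(R,E(N))$ (using the $A$\+flatness of $R$ to see that these coinduced modules are $A$\+injective, the closure of fp\+injectivity under cokernels of monomorphisms, and the cosyzygy lemma together with the uniform bound on injective dimensions of fp\+injective $A$\+modules to terminate in $\le n+1$ steps), and then totalizing. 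The preliminary identification of the kernel of $\Hot(R\modl_{A\dinj})\rarrow\sD^\sico_A(R\modl)$ with the $A$\+contractible complexes, via $\sD^\co(A\modl)\simeq\Hot(A\modl_\inj)$, and the closing Verdier-localization argument are the standard ingredients that the paper takes for granted (cf.~\cite[Lemma~1.6]{Pkoszul}). One small redundancy: absolute acyclicity of the cone already follows from the finite width of the augmented resolution bicomplex (a finite exact complex of complexes in $A\modl_\fpi$), so you do not actually need to re-invoke the finite-homological-dimension hypothesis for that particular step, though doing so is of course harmless.
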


\begin{proof}
 Part~(a): in view of the construction in the proof of
Theorem~\ref{semiderived-fp-injective-flat}(a), it remains to show
that for any complex of $A$\+fp\+injective left $R$\+modules $J^\bu$
there exists a complex of $A$\+injective left $R$\+modules $K^\bu$
together with a morphism of complexes of $R$\+modules $J^\bu\rarrow
K^\bu$ with a cone coacyclic as a complex of $A$\+modules.
 This is easily done using the finite resolution argument
of~\cite[Sections~3.6\+-3.7]{Pkoszul} and~\cite[Section~A.5]{Pcosh}.
 The proof of part~(b) is similar (cf.~\cite[Section~3.8]{Pkoszul}).
 One only has to notice that since the ring $S$ is a projective left
$B$\+module, any projective left $S$\+module is also projective
over~$B$; so any complex of left $S$\+modules is the image of
a surjective morphism from a complex of $B$\+projective left
$S$\+modules.
\end{proof}

 In order to formulate the derived semico-semicontra correspondence
(noncommutative covariant relative Serre--Grothendieck duality)
theorem, we need several more definitions.
 Let $A\rarrow R$ be a morphism of associative rings; assume that
the ring $R$ is a flat right $A$\+module.
 A left $R$\+module $P$ is said to be \emph{weakly projective
relative to~$A$} (\emph{weakly $R/A$\+projective}) if the functor
$\Hom_R(P,{-})$ takes short exact sequences of $A$\+injective
left $R$\+modules to short exact sequence of abelian groups
(cf.~\cite[Sections~4.1 and~4.3]{BP} and~\cite[Sections~5.1.4, 5.3
and~9.1]{Psemi}).
 Similarly, let $B\rarrow S$ be a morphism of associative rings;
assume that the ring $S$ is a flat left $B$\+module.
 A right $S$\+module $F$ is said to be \emph{weakly flat relative
to~$B$} (\emph{weakly $S/B$\+flat}) if the functor $F\ot_S{-}$
takes short exact sequences of $B$\+flat left $S$\+modules to
short exact sequences of abelian groups
(cf.~\cite[Section~5.1.6]{Psemi}).

\begin{lem}  \label{weakly-relatively-adjusted}
\textup{(a)} A left $R$\+module $P$ is weakly $R/A$\+projective
if and only if\/ $\Ext_R^1(P,J)=0$ for any $A$\+injective left
$R$\+module $J$, and if and only if\/ $\Ext_R^n(P,J)=0$ for all\/ $n>0$
and any such~$J$.
 Consequently, the class of weakly $R/A$\+projective left
$R$\+modules is closed under extensions and the passages to
the kernels of surjective morphisms. \par
\textup{(b)} A right $S$\+module $F$ is weakly $S/B$\+flat if and
only if\/ $\Tor^R_1(F,G)=0$ for any $B$\+flat left $S$\+module $G$, and
if and only if\/ $\Tor^R_n(F,G)=0$ for all\/ $n>0$ and any such~$G$.
 Consequently, the class of weakly $S/B$\+flat right $S$\+modules
is closed under extensions and the passages to the kernels of
surjective morphisms.
\end{lem}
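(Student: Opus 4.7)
\emph{Preparatory closure.} The plan is to establish the three-way equivalence by manipulating the Ext (respectively Tor) long exact sequence, and then read off the closure properties from the same sequences. Parts~(a) and~(b) are formally dual, so I sketch~(a) in detail and indicate the changes needed for~(b). The key preliminary observation in~(a) is that the class of $A$\+injective left $R$\+modules is closed under cokernels of $R$\+monomorphisms: given $0\to J_1\to J_2\to J_3\to 0$ exact in $R\modl$ with $J_1,J_2$ injective over $A$, the sequence splits in $A\modl$ because $J_1$ is $A$\+injective, exhibiting $J_3$ as an $A$\+direct summand of $J_2$. Moreover, every injective left $R$\+module is $A$\+injective, since the restriction functor $R\modl\to A\modl$ is right adjoint to the extension functor $R\otimes_A({-})$, and the latter is exact because $R$ is a flat right $A$\+module. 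Dually in~(b), the class of $B$\+flat left $S$\+modules is closed under kernels of $S$\+epimorphisms (by the Tor long exact sequence for $B$\+modules), and every projective left $S$\+module is $B$\+flat.

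\emph{Three-way equivalence in~(a).} The implication from ``$\Ext^n_R(P,J)=0$ for all $n>0$'' to ``$\Ext^1_R(P,J)=0$'' is trivial, and the implication ``$\Ext^1_R(P,J)=0$ for every $A$\+injective $J$'' $\Rightarrow$ ``$P$ is weakly $R/A$\+projective'' is immediate from the Ext long exact sequence applied to a short exact sequence of $A$\+injective $R$\+modules. For the converse, assume $P$ is weakly $R/A$\+projective and let $J$ be an $A$\+injective left $R$\+module. Embed $J$ into an injective left $R$\+module $K$; by the preparatory step both $K$ and $K/J$ are $A$\+injective, so $0\to J\to K\to K/J\to 0$ lies in the class of $A$\+injectives. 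Weak projectivity gives $\Ext^1_R(P,J)=0$, and for $n\ge 2$ the dimension shift $\Ext^n_R(P,J)\simeq\Ext^{n-1}_R(P,K/J)$ (using $\Ext^m_R(P,K)=0$ for $m\ge 1$) yields the vanishing of $\Ext^n_R(P,J)$ by induction on~$n$.

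\emph{Closure properties and part~(b).} The closure properties in~(a) follow directly from the Ext long exact sequence: for an extension $0\to P_1\to P\to P_3\to 0$ of weakly $R/A$\+projective modules and any $A$\+injective $J$, the group $\Ext^1_R(P,J)$ is sandwiched between two zeros; for a short exact sequence $0\to K\to P\to P'\to 0$ with $P,P'$ weakly $R/A$\+projective, one has $\Ext^1_R(K,J)\simeq\Ext^2_R(P',J)=0$ by the already-proven equivalence. Part~(b) is the formal dual: given a $B$\+flat left $S$\+module $G$, take a surjection from a projective left $S$\+module $Q\to G$ (which is automatically $B$\+flat) with kernel $G'$, note that $G'$ is $B$\+flat by the preparatory step, and run the analogous dimension-shift argument with Tor in place of Ext. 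I do not anticipate a real obstacle; the only mildly subtle point is that the closure of $A$\+injective $R$\+modules under cokernels must be verified by a splitting argument in $A\modl$ rather than in $R\modl$, and symmetrically for $B$\+flat $S$\+modules under kernels.
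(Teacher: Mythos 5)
Your proposal is correct and follows essentially the same route as the paper: the key observation in both is that $A$-injective left $R$-modules are closed under cokernels of $R$-monomorphisms (via splitting over $A$) and contain all injective $R$-modules (via flatness of $R$ over $A$), so an injective resolution of $J$ stays inside the exact category of $A$-injectives, whence weak relative projectivity forces $\Ext_R^{>0}(P,J)=0$. The paper phrases this by observing the resolution is exact in the relevant exact category, whereas you unroll it as a one-step dimension shift plus induction; these are the same argument in different packaging, and your explicit treatment of the closure properties merely fills in what the paper leaves as an evident consequence.
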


\begin{proof}
 Part~(a): it is clear that any left $R$\+module $P$ satisfying
the $\Ext^1$ vanishing condition satisfies the definition of
weakly relative projectivity.
 In order to show that $\Ext_R^{>0}(P,J)=0$ for any weakly
$R/A$\+projective left $R$\+module $P$ and $A$\+injective
left $R$\+module $J$, one simply notices that any injective right
resolution of the $R$\+module $J$ is exact with respect to
the exact category of $A$\+injective left $R$\+modules (since
injective left $R$\+modules are $A$\+injective).
 Part~(b): it is clear that any right $S$\+module $F$ satisfying
the $\Tor_1$ vanishing condition satisfies the definition of
weakly relative flatness.
 To check that $\Tor^R_{>0}(F,G)=0$ for any weakly $S/B$\+flat
right $S$\+module $F$ and any $B$\+flat left $S$\+module $G$, one
notices that any flat left resolution of the $S$\+module $G$ is
exact with respect to the exact category of $B$\+flat left
$S$\+modules (since flat left $S$\+modules are $B$\+flat). 
\end{proof}

 Here is the main definition of this section.
 Let $A\rarrow R$ and $B\rarrow S$ be a pair of associative ring
homomorphisms; assume that the ring $A$ is left coherent,
the ring $B$ is right coherent, the ring $R$ is a flat right
$A$\+module, and the ring $S$ is a flat left $B$\+module.
 A \emph{relative dualizing complex} for the pair of morphisms
$A\rarrow R$ and $B\rarrow S$ is a triple consisting of a dualizing
complex $D^\bu$ for the rings $A$ and $B$, a finite complex of
$R$\+$S$\+bimodules $T^\bu$, and a morphism of complexes of
$A$\+$B$\+bimodules $D^\bu\rarrow T^\bu$ satisfying the following
conditions:
\begin{enumerate}
\renewcommand{\theenumi}{\roman{enumi}}
\setcounter{enumi}{3}
\item the terms of the complex $T^\bu$ are weakly $R/A$\+projective
left $R$\+modules and weakly $S/B$\+flat right $S$\+modules;
\item the morphism of complexes of $R$\+$B$\+bimodules
$R\ot_AD^\bu\rarrow T^\bu$ and the morphism of complexes of
$A$\+$S$\+bimodules $D^\bu\ot_BS\rarrow T^\bu$ induced by
the morphism $D^\bu\rarrow T^\bu$ are quasi-isomorphisms of
finite complexes.
\end{enumerate}

\begin{ex}  \label{flat-tensor-example}
 (1)~Let $A$ and $B$ be associative algebras over a commutative
ring~$k$; assume that the ring $A$ is left coherent and the ring $B$
is right coherent.
 Suppose that a dualizing complex $D^\bu$ for the rings $A$ and $B$
is a complex of $A$\+$B$\+bimodules over~$k$ (i.~e., the left
and right $k$\+module structures on $D^\bu$ coincide).
 Let $U$ be a $k$\+flat associative algebra over~$k$.
 Consider the natural homomorphisms of associative rings
$A\rarrow U\ot_kA=R$ and $B\rarrow U\ot_kB=S$.
 Then the complex of $R$\+$S$\+bimodules $T^\bu=U\ot_k D^\bu$ together
with the natural morphism $D^\bu\rarrow T^\bu$ is a relative dualizing
complex for the pair of ring homomorphisms $A\rarrow R$
and $B\rarrow S$.

 (2)~In particular, let $A$ be a coherent commutative ring and
$R$ be an $A$\+flat associative $A$\+algebra.
 Let $D^\bu$ be a dualizing complex for the ring $A$ (i.~e.,
a complex of $A$\+modules that, viewed as a complex of
$A$\+$A$\+bimodules, is a dualizing complex for the rings $A$ and~$A$).
 Then the complex of $R$\+$R$\+bimodules $T^\bu=R\ot_AD^\bu$ together
with the natural morphism $D^\bu\rarrow T^\bu$ is a relative dualizing
complex for the pair of ring homomorphisms $A\rarrow R$ and
$A\rarrow R$.
\end{ex}

\begin{ex}
 (1)~Let $A\rarrow R$ be a homomorphism of associative algebras over
a field~$k$ such that the algebra $A$ is finite-dimensional and
the ring $R$ is a projective right $A$\+module.
 Then the tensor product $R\ot_AA^*$ has a natural structure of
a semialgebra over the coalgebra $A^*$ \cite[Section~10.2.1]{Psemi}.
 The $A$\+$A$\+bimodule $R\ot_AA^*$ is an injective right $A$\+module
by construction; suppose further that it is an injective left
$A$\+module.
 Set $S=\Hom_A(A^*\;R\ot_AA^*)$; the $A$\+$A$\+bimodule $S$ can be
also defined as the cotensor product $A\oc_{A^*}(R\ot_AA^*)$ over
the coalgebra~$A^*$.
 Then there is a natural associative algebra structure on $S$ and
a natural homomorphism of associative algebras $A\rarrow S$
\cite[Section~B.2.2]{Psemi}.
 The $R$\+$S$\+bimodule $R\ot_AA^*=\bT\simeq A^*\ot_AS$, together with
the natural map $A^*\rarrow\bT$, is a relative dualizing complex for
the pair of ring homomorphisms $A\rarrow R$ and $A\rarrow S$.

 (2)~More generally, let $K$ be a classically semisimple ring and
$K\rarrow A\rarrow R$ be associative ring homomorphisms such that
$A$ is a finitely generated projective left $K$\+module and $R$ is
a projective right $A$\+module.
 Set $\C=\Hom_K(A,K)$ and suppose that $\C$ is a finitely generated
projective left $K$\+module (see
Example~\ref{double-dualization-coalgebra-example} above).
 Set $B=\Hom_K(\C,K)$.
 The $A$\+$B$\+bimodule $R\ot_A\C$ is an injective right $B$\+module
by construction; suppose further that it is an injective left
$A$\+module.
 Set $S=\Hom_A(\C\;R\ot_A\C)$; the $B$\+$B$\+bimodule $S$ can be
also defined as the cotensor product $B\oc_\C(R\ot_A\C)$ over
the coring~$\C$.

 The tensor product $R\ot_A\C$ has a natural structure of a semialgebra
over the coring~$\C$.
 There is a natural associative ring structure on $S$ and a natural
homomorphism of associative rings $B\rarrow S$
\cite[Section~3.3]{BP}.
 The $R$\+$S$\+bimodule $R\ot_A\C=\bT\simeq\C\ot_BS$, together with
the natural map $\C\rarrow\bT$, is a relative dualizing complex for
the pair of ring homomorphisms $A\rarrow R$ and $B\rarrow S$.
\end{ex}

 The following theorem is our main result.

\begin{thm}  \label{relative-co-contra}
  Let $A\rarrow R$ and $B\rarrow S$ be a pair of associative ring
homomorphisms; assume that the ring $A$ is left coherent,
the ring $B$ is right coherent, the ring $R$ is a flat right
$A$\+module, the ring $S$ is a flat left $B$\+module, and all
fp\+injective left $A$\+modules have finite injective dimensions.
 Let $D^\bu\rarrow T^\bu$ be a relative dualizing complex for the pair
of morphisms $A\rarrow R$ and $B\rarrow S$.
 Then there is an equivalence between the $R/A$\+semicoderived
category of left $R$\+modules\/ $\sD^\sico_A(R\modl)$ and
the $S/B$\+semicontraderived category of left $S$\+modules\/
$\sD^\sictr_B(S\modl)$ provided by the mutually inverse functors\/
$\boR\Hom_R(T^\bu,{-})$ and\/ $T^\bu\ot^\boL_S{-}$.
\end{thm}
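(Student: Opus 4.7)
The proof adapts that of the absolute Theorem~\ref{derived-co-contra}.  First, using Theorem~\ref{semiderived-injective-projective}(a) (whose hypotheses hold since $A$ is left coherent, $R$ is a flat right $A$\+module, and fp\+injective left $A$\+modules have finite injective dimension), identify $\sD^\sico_A(R\modl)$ with $\sD(R\modl_{A\dinj})$.  From the dualizing complex assumptions on $D^\bu$ together with Proposition~\ref{cfh-generalized}, flat left $B$\+modules have finite projective dimension, so Theorem~\ref{semiderived-fp-injective-flat}(b) identifies $\sD^\sictr_B(S\modl)$ with $\sD^\sictr_B(S\modl_{B\dfl})$.  In these terms, construct $\boR\Hom_R(T^\bu,{-})$ by applying the underived $\Hom_R(T^\bu,{-})$ termwise to complexes $J^\bu$ of $A$\+injective left $R$\+modules, and construct $T^\bu\ot^\boL_S{-}$ by applying $T^\bu\ot_S{-}$ termwise to complexes $F^\bu$ of $B$\+flat left $S$\+modules.

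The key reduction to the absolute setting exploits the quasi-isomorphisms in condition~(v).  Observe that $R\ot_AD^i$ is weakly $R/A$\+projective: by adjunction $\Hom_R(R\ot_AD^i,{-})=\Hom_A(D^i,{-})$, and short exact sequences of $A$\+injective left $R$\+modules split $A$\+linearly.  Together with condition~(iv) on the terms of $T^\bu$, this makes the cone of $R\ot_AD^\bu\rarrow T^\bu$ a bounded acyclic complex of weakly $R/A$\+projective $R$\+modules; a downward induction on its cocycles (using Lemma~\ref{weakly-relatively-adjusted}(a)) produces a natural quasi-isomorphism of $B$\+complexes
\begin{equation*}
\Hom_R(T^\bu,J^\bu)\lrarrow\Hom_A(D^\bu,J^\bu),
\end{equation*}
whose right-hand side has $B$\+flat terms by Lemma~\ref{tensor-hom-flat-fp-injective}(b).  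Symmetrically, using $D^\bu\ot_BS\rarrow T^\bu$ together with the analogous fact that $D^i\ot_BS$ is weakly $S/B$\+flat, there is a natural quasi-isomorphism of $A$\+complexes $T^\bu\ot_SF^\bu\rarrow D^\bu\ot_BF^\bu$ whose right-hand side has fp\+injective terms by Lemma~\ref{tensor-hom-flat-fp-injective}(a).  These two reductions show that both derived functors are well-defined on the semiderived categories.

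Adjointness is inherited from the underived tensor--Hom adjunction, yielding the isomorphism $\Hom_S(F^\bu,\Hom_R(T^\bu,J^\bu))\cong\Hom_R(T^\bu\ot_SF^\bu,J^\bu)$ at the level of complexes.  To verify that the adjunction morphisms are isomorphisms, reduce once more to the absolute case: as an $A$\+complex, the counit $T^\bu\ot_S\Hom_R(T^\bu,J^\bu)\rarrow J^\bu$ is quasi-isomorphic to $D^\bu\ot_B\Hom_A(D^\bu,J^\bu)\rarrow J^\bu$, which is a quasi-isomorphism by Lemma~\ref{dualizing-covariant-quasi-isomorphisms}(b); the unit is handled symmetrically via Lemma~\ref{dualizing-covariant-quasi-isomorphisms}(a).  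The main obstacle will be the careful management of the bicomplexes that arise in making these reductions precise---specifically, showing that the termwise quasi-isomorphisms from the previous paragraph genuinely identify $\Hom_R(T^\bu,{-})$ with $\Hom_A(D^\bu,{-})$ as functors between the semiderived categories, despite the $S$\+module structure on the source not transferring to $R\ot_AD^\bu$; this is accomplished by following the bicomplex and functorial-resolution arguments in the final portion of the proof of Theorem~\ref{derived-co-contra}.
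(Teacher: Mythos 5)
Your construction of the two derived functors matches the paper: identify $\sD^\sico_A(R\modl)$ with $\sD(R\modl_{A\dinj})$ and $\sD^\sictr_B(S\modl)$ with $\sD^\sictr_B(S\modl_{B\dfl})$, apply the underived functors termwise, and exploit condition~(v) to compare them with the absolute functors $\Hom_A(D^\bu,{-})$ and $D^\bu\ot_B{-}$. Your observation that $R\ot_AD^i$ is weakly $R/A$\+projective by adjunction, and that the cone of $R\ot_AD^\bu\to T^\bu$ is therefore a bounded acyclic complex of weakly $R/A$\+projective modules, is exactly the right way to make Lemma~\ref{weakly-relatively-adjusted} bite. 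One small imprecision: what this argument actually buys you is that the cone of $\Hom_R(T^\bu,J^\bu)\to\Hom_A(D^\bu,J^\bu)$ is an \emph{absolutely acyclic} complex of $B$\+modules (a total complex of a finite acyclic complex of complexes), not merely that the map is a quasi-isomorphism --- and absolute acyclicity is what you actually need to conclude the functor is well-defined on the semiderived category and that the relevant diagrams commute.

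Where you diverge from the paper is the final step. You propose to re-run the bicomplex and functorial-resolution argument from the proof of Theorem~\ref{derived-co-contra} in the relative setting, and you flag the resulting ``careful management of the bicomplexes'' as the main obstacle, without actually carrying it out. But the paper avoids this entirely by a cleaner observation: the comparisons you have already established show that the forgetful functors $\sD^\sico_A(R\modl)\to\sD^\co(A\modl)$ and $\sD^\sictr_B(S\modl)\to\sD^\ctr(B\modl)$ fit into commutative squares carrying $\boR\Hom_R(T^\bu,{-})$ to $\boR\Hom_A(D^\bu,{-})$ and $T^\bu\ot^\boL_S{-}$ to $D^\bu\ot^\boL_B{-}$, and that they transform the relative adjunction morphisms into the absolute ones. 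Since the forgetful functors are conservative (a morphism in $\sD^\sico_A(R\modl)$ is an isomorphism if and only if its cone is $A$\+coacyclic, which is precisely what the forgetful functor detects), the adjunction isomorphisms follow directly from Theorem~\ref{derived-co-contra}, with no new bicomplex work at all. You already have all the ingredients for this shortcut; you just didn't assemble them. As written, your proposal leaves the hardest step deferred, so I'd call this a genuine, if small, gap --- and the conservativity observation is the missing idea that closes it.
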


\begin{proof}
 The derived functor $\boR\Hom_R(T^\bu,{-})\:\sD^\sico_A(R\modl)\rarrow
\sD^\sictr_B(S\modl)$ is constructed by identifying the semicoderived
category $\sD^\sico_A(R\modl)$ with the derived category
$\sD(R\modl_{A\dinj})$ (see
Theorem~\ref{semiderived-injective-projective}(a)) and applying
the functor $\Hom_R(T^\bu,{-})$ to complexes of $A$\+injective
left $R$\+modules.
 Given a complex of $A$\+injective left $R$\+modules $J^\bu$, there is
a natural morphism of complexes of left $B$\+modules
$$
 \Hom_R(T^\bu,J^\bu)\lrarrow\Hom_R(R\ot_A D^\bu\;J^\bu)\simeq
 \Hom_A(D^\bu,J^\bu);
$$
in view of Lemma~\ref{weakly-relatively-adjusted}(a), the cone of
this morphism is the total complex of a finite acyclic complex
of complexes of left $B$\+modules, that is an absolutely acyclic
complex of left $B$\+modules.
 In particular, it follows that the complex of left $S$\+modules
$\Hom_R(T^\bu,J^\bu)$ is $B$\+contraacyclic whenever a complex of
$A$\+injective left $R$\+modules $J^\bu$ is contractible as
a complex of left $A$\+modules.

 The derived functor $T^\bu\ot^\boL_S{-}\:\sD^\sictr_B(S\modl)\rarrow
\sD^\sico_A(R\modl)$ is obtained by identifying the semicontraderived
category $\sD^\sictr_B(S\modl)$ with the semicontraderived category
$\sD^\sictr_B(S\modl_{B\dfl})$ (see
Theorem~\ref{semiderived-fp-injective-flat}(b)) and applying the functor
$T^\bu\ot_S{-}$ to complexes of $B$\+flat left $S$\+modules.
 Notice that, by Proposition~\ref{cfh-generalized}, the exact
category of flat left $B$\+modules has finite homological dimension,
so any contraacyclic complex of flat left $B$\+modules is absolutely
acyclic with respect to the exact category $B\modl_\fl$.
 Given a complex of $B$\+flat left $S$\+modules $G^\bu$, there is
a natural morphism of complexes of left $A$\+modules
$$
 D^\bu\ot_B G^\bu\simeq (D^\bu\ot_B S)\ot_S G^\bu\lrarrow
 T^\bu\ot_S G^\bu;
$$
in view of Lemma~\ref{weakly-relatively-adjusted}(b), the cone of
this morphism is the total complex of a finite acyclic complex of
complexes of left $A$\+modules, that is an absolutely acyclic
complexes of left $A$\+modules.
 It follows that the complex of left $R$\+modules $T^\bu\ot_SG^\bu$
is $A$\+coacyclic whenever a complex of $B$\+flat left $S$\+modules
$G^\bu$ is $B$\+contraacyclic.

 We have constructed the derived functors $\boR\Hom_R(T^\bu,{-})\:
\sD^\sico_A(R\modl)\rarrow\sD^\sictr_B(S\modl)$ and
$T^\bu\ot^\boL_S{-}\:\sD^\sictr_B(S\modl)\rarrow\sD^\sico_A(R\modl)$.
 It is easy to show that the former of them is right adjoint to
the latter (see, e.~g., \cite[Lemma~8.3]{Psemi}).
 It remains to check that the adjunction morphisms are isomorphisms
in $\sD^\sico_A(R\modl)$ and $\sD^\sictr_B(S\modl)$.
 Now we recall that, as we have just seen, the forgetful functors
\begin{align*}
 \sD^\sico_A(R\modl)&\lrarrow\sD^\co(A\modl) \\
 \intertext{and}
 \sD^\sictr_B(S\modl)&\lrarrow\sD^\ctr(B\modl)
\end{align*}
transform the functor $\boR\Hom_R(T^\bu,{-})$ into the functor
$\boR\Hom_A(D^\bu,{-})$ and the functor $T^\bu\ot^\boL_S{-}$ into
the functor $D^\bu\ot^\boL_B{-}$, i.~e., there are commutative
diagrams of triangulated functors
$$\dgARROWLENGTH=4em
\!\!\!
\begin{diagram} 
\node{\sD^\sico_A(R\modl)}
\arrow[2]{e,t}{\boR\Hom_R(T^\bu,{-})}
\arrow{s}
\node[2]{\sD^\sictr_B(S\modl)} \arrow{s} \\
\node{\sD^\co(A\modl)}
\arrow[2]{e,t}{\boR\Hom_A(D^\bu,{-})}
\node[2]{\sD^\ctr(B\modl)}
\end{diagram}
\,
\begin{diagram} 
\node{\sD^\sico_A(R\modl)}
\arrow{s}
\node[2]{\sD^\sictr_B(S\modl)} \arrow{s}
\arrow[2]{w,t}{T^\bu\ot^\boL_S{-}} \\
\node{\sD^\co(A\modl)}
\node[2]{\sD^\ctr(B\modl)}
\arrow[2]{w,t}{D^\bu\ot^\boL_B{-}}
\end{diagram}
\!\!\!
$$
 The forgetful functors also transform the adjunction morphisms
for the pair of functors $\boR\Hom_R(T^\bu,{-})$ and
$T^\bu\ot^\boL_S{-}$ into the adjunction morphisms for the pair of
functors $\boR\Hom_A(D^\bu,{-})$ and $D^\bu\ot^\boL_B{-}$.
 Since the latter pair of adjunction morphisms are isomorphisms
by Theorem~\ref{derived-co-contra} and the forgetful functors are
conservative, the former pair of adjunction morphisms are
isomorphisms, too.
\end{proof}

\Section{The Semitensor Product}

 The coderived category of quasi-coherent sheaves on a separated
Noetherian scheme with a dualizing complex has a tensor category
structure depending on the choice of a dualizing complex, which is
the unit object of this tensor category structure (see~\cite[Chapter~6,
Proposition~8.10, and Appendix~B]{Murf} or~\cite[Section~B.2.5]{Psing}).
 The aim of this section is to extend this construction from Noetherian
to coherent commutative rings, and further to the relative situation
with a flat commutative algebra $R$ over a coherent commutative
ring $A$ with a dualizing complex~$D^\bu$.
 The tensor category structure with the unit object $T^\bu=R\ot_A D^\bu$
will be defined on the $R/A$\+semicoderived category
$\sD^\sico_A(R\modl)$ in the relative situation.

 The construction is based on the results of
Sections~\ref{covariant-duality}\+-\ref{semiderived-categories}.
 As in these sections, we will have to make the technical assumption
that fp\+injective $A$\+modules have finite injective dimensions.
 Throughout this section, the term \emph{dualizing complex of
$A$\+modules} is understood in the sense of
Example~\ref{flat-tensor-example}(2); i.~e., it is a complex of
$A$\+modules which, viewed as a complex of $A$\+$A$\+bimodules,
is a dualizing complex for the rings $A$ and $A$, in the sense
of the definition in Section~\ref{covariant-duality}.

\begin{thm}
 Let $A$ be a coherent commutative ring such that fp\+injective
$A$\+modules have finite injective dimensions, and let $D^\bu$ be
a dualizing complex of $A$\+modules.
 Then there is a natural associative, commutative, and unital tensor
category structure on the coderived category of $A$\+modules, provided
by the operation of \emph{(derived) cotensor product} of
complexes of $A$\+modules
$$
 \oc_{D^\bu}\:\sD^\co(A\modl)\times\sD^\co(A\modl)
 \lrarrow\sD^\co(A\modl).
$$
 The dualizing complex $D^\bu\in\sD^\b_\fp(A\modl)\subset
\sD^\co(A\modl)$ is the unit object of this tensor category structure.
\end{thm}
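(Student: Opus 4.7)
The strategy is to transport the evident symmetric monoidal structure on $\sD^\ctr(A\modl)$ (the derived tensor product over~$A$, with unit~$A$) to $\sD^\co(A\modl)$ by means of the triangulated equivalence
$$
\boR\Hom_A(D^\bu,{-})\:\sD^\co(A\modl)\simeq\sD^\ctr(A\modl)\:D^\bu\ot^\boL_A{-}
$$
supplied by Theorem~\ref{derived-co-contra} in the case $A=B$ (the $A$\+$A$\+bimodule structure on $D^\bu$ coming from the commutativity of~$A$). The resulting bifunctor on $\sD^\co(A\modl)$ is, in closed form,
$$
M^\bu\oc_{D^\bu}N^\bu\;:=\;D^\bu\ot^\boL_A\bigl(\boR\Hom_A(D^\bu,M^\bu)\ot^\boL_A\boR\Hom_A(D^\bu,N^\bu)\bigr),
$$
and Theorem~\ref{derived-co-contra} forces $D^\bu$ to correspond to $A\in\sD^\ctr(A\modl)$ under the equivalence, hence to be the new unit.

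First I would install the tensor structure on $\sD^\ctr(A\modl)$. By Theorem~\ref{contraderived-flat-projectives}(a), it is equivalent to $\sD^\ctr(A\modl_\fl)$, and Proposition~\ref{cfh-generalized} applied to our dualizing complex shows that the exact category $A\modl_\fl$ has finite homological dimension. Hence by \cite[Remark~2.1]{Psemi} contraacyclicity in $A\modl_\fl$ coincides with absolute acyclicity, and since the componentwise tensor product $F^\bu\ot_A G^\bu$ of complexes of flat $A$\+modules is again flat termwise, with $F^\bu\ot_A{-}$ and ${-}\ot_A G^\bu$ biexact bifunctors that obviously preserve totalizations of short exact sequences of complexes, $\ot_A$ descends to a well-defined bifunctor $\sD^\ctr(A\modl_\fl)\times\sD^\ctr(A\modl_\fl)\rarrow\sD^\ctr(A\modl_\fl)$. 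This bifunctor inherits associativity, commutativity, and unitality (with unit $A$) from the underlying tensor product of modules.

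To compute $\oc_{D^\bu}$ concretely, represent $M^\bu,N^\bu$ by complexes of injective $A$\+modules $J^\bu_M,J^\bu_N$ using Theorem~\ref{coderived-homotopy-injectives}; Lemma~\ref{tensor-hom-flat-fp-injective}(b) makes $\Hom_A(D^\bu,J^\bu_M)$ and $\Hom_A(D^\bu,J^\bu_N)$ into complexes of flat modules, their componentwise tensor product is again flat, and finally Lemma~\ref{tensor-hom-flat-fp-injective}(a) ensures $D^\bu\ot_A({-})$ lands in complexes of fp\+injective modules, viewed inside $\sD^\co(A\modl)$ via Theorem~\ref{coderived-fp-injective}. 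Associator, symmetry, and unitor isomorphisms are transported from those of $\ot_A$ on flat complexes; their naturality as morphisms in $\sD^\co(A\modl)$ follows by carrying out the whole construction with functorial injective resolutions in $A\modl$. For the unit, the homothety condition~(iii) in the definition of dualizing complex yields $\boR\Hom_A(D^\bu,D^\bu)\simeq A$ in $\sD(A\modl)$, so
$$
D^\bu\oc_{D^\bu}N^\bu\;\simeq\;D^\bu\ot^\boL_A\boR\Hom_A(D^\bu,N^\bu)\;\simeq\;N^\bu,
$$
the second isomorphism being the adjunction (unit) isomorphism of Theorem~\ref{derived-co-contra}; the right unit law follows by commutativity.

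The principal technical obstacle is the well-definedness of $\ot_A$ on $\sD^\ctr(A\modl_\fl)$ in the first step, and this is precisely the point where the assumption of finite injective dimension of fp\+injective $A$\+modules is indispensable: through Proposition~\ref{cfh-generalized} it forces finite projective dimension of flat $A$\+modules, collapsing contraacyclicity to absolute acyclicity in $A\modl_\fl$, which any biexact bifunctor automatically respects. Everything else—the transport of the monoidal structure through the equivalence of Theorem~\ref{derived-co-contra}, the identification of the unit, and the verification of the coherence axioms—is then formal, given that the equivalence is furnished by a specific adjoint pair $(\boR\Hom_A(D^\bu,{-}),\,D^\bu\ot^\boL_A{-})$ whose counit and unit isomorphisms were constructed explicitly in the proof of that theorem.
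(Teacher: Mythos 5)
Your proposal is correct and follows essentially the same route as the paper: both pass through the chain of equivalences $\sD^\co(A\modl)\simeq\sD^\ctr(A\modl)\simeq\sD^\ctr(A\modl_\fl)=\sD^\abs(A\modl_\fl)$ (via Theorems~\ref{derived-co-contra} and~\ref{contraderived-flat-projectives}(a), Proposition~\ref{cfh-generalized}, and \cite[Remark~2.1]{Psemi}), observe that the underived $\ot_A$ descends to $\sD^\abs(A\modl_\fl)$ because tensoring a complex of flat modules with an absolutely acyclic one stays absolutely acyclic, and then transport the resulting tensor structure with unit~$A$ back to $\sD^\co(A\modl)$, arriving at the same closed formula $I^\bu\oc_{D^\bu}J^\bu = D^\bu\ot_A\Hom_A(D^\bu,I^\bu)\ot_A\Hom_A(D^\bu,J^\bu)$ for complexes of injectives.
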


\begin{proof}
 We use the results of Section~\ref{covariant-duality} in order to identify
the coderived category of $A$\+modules $\sD^\co(A\modl)$ with
the absolute derived category of flat $A$\+modules
$\sD^\abs(A\modl_\fl)$.
 Indeed, by Theorem~\ref{derived-co-contra}, the choice of a dualizing
complex $D^\bu$ induces an equivalence between the coderived category
and the contraderived category of $A$\+modules, $\sD^\co(A\modl)\simeq
\sD^\ctr(A\modl)$, and by 
Theorem~\ref{contraderived-flat-projectives}(a),
the contraderived category of $A$\+modules is equivalent to
the contraderived category of flat $A$\+modules,
$\sD^\ctr(A\modl)\simeq\sD^\ctr(A\modl_\fl)$.
 By Proposition~\ref{cfh-generalized}, flat $A$\+modules have finite
projective dimensions, and by~\cite[Remark~2.1]{Psemi}, it follows
that contraacyclic complexes of flat $A$\+modules are absolutely
acyclic with respect to the exact category of flat $A$\+modules,
so the contraderived and the absolute derived categories of flat
$A$\+modules coincide, $\sD^\ctr(A\modl_\fl)=\sD^\abs(A\modl_\fl)$
(see~\cite[Section~A.1]{Pcosh} for the definition of the absolute
derived category).

 Notice that the tensor product of a complex of flat $A$\+modules
and an absolutely acyclic complex of flat $A$\+modules is obviously
an absolutely acyclic complex of flat $A$\+modules, so there is
the tensor product functor
$$
 \ot_A\:\sD^\abs(A\modl_\fl)\times\sD^\abs(A\modl_\fl)\lrarrow
 \sD^\abs(A\modl_\fl).
$$
 By the definition, the cotensor product functor $\oc_{D^\bu}$ is
obtained from this functor of tensor product of complexes of
flat $A$\+modules using the equivalence of categories
$\sD^\co(A\modl)\simeq\sD^\abs(A\modl_\fl)$.
 Explicitly, this means that in order to compute the cotensor product
of two objects of the coderived category, one has to represent them
by two complexes of injective $A$\+modules $I^\bu$ and $J^\bu$
(using the result of Theorem~\ref{coderived-homotopy-injectives})
and then apply the formula
$$
 I^\bu\oc_{D^\bu}J^\bu = D^\bu\ot_A\Hom_A(D^\bu,I^\bu)\ot_A
 \Hom_A(D^\bu,J^\bu).
$$

 Furthermore, there is a well-defined functor of tensor product of
complexes of flat $A$\+modules and arbitrary complexes of $A$\+modules
$$
 \ot_A\:\sD^\abs(A\modl_\fl)\times\sD^\co(A\modl)\lrarrow
 \sD^\co(A\modl),
$$
since the tensor product of an absolutely acyclic complex of flat
$A$\+modules with any complex of $A$\+modules is a coacyclic (in fact,
absolutely acyclic) complex of $A$\+modules, as is the tensor product
of a complex of flat $A$\+modules with a coacyclic complex of
$A$\+modules.
 The cotensor product of two complexes of injective $A$\+modules
can be alternatively defined by the rules
$$
 I^\bu\oc_{D^\bu}J^\bu\simeq\Hom_A(D^\bu,I^\bu)\ot_AJ^\bu
 \simeq I^\bu\ot_A\Hom_A(D^\bu,J^\bu),
$$
where the isomorphism signs denote natural isomorphisms in
the absolute derived category of fp\+injective $A$\+modules
(cf.~\cite[Proposition~8.10]{Murf} and~\cite[Section~B.2.5]{Psing}).
\end{proof}

 The following theorem is the main result of this section.

\begin{thm}
 Let $A$ be a coherent commutative ring such that fp\+injective
$A$\+modules have finite injective dimensions, and let $D^\bu$ be
a dualizing complex of $A$\+modules.
 Let $A\rarrow R$ be a morphism of commutative rings making
$R$ a flat $A$\+module.
 Then there is a natural associative, commutative, and unital tensor
category structure on the $R/A$\+semicoderived category of
$R$\+modules, provided by the functor of \emph{(derived) semitensor
product} of complexes of $R$\+modules
$$
 \os_{T^\bu}\:\sD^\sico_A(R\modl)\times\sD^\sico_A(R\modl)
 \lrarrow\sD^\sico_A(R\modl),
$$
where $T^\bu=R\ot_AD^\bu$ is the relative dualizing complex
for the ring homomorphism $A\rarrow R$ associated with the dualizing
complex $D^\bu$ for the ring $A$.
 The relative dualizing complex $T^\bu\in\sD^\sico_A(R\modl)$ is
the unit object of this tensor category structure.
\end{thm}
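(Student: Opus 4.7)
My plan is to transport the tensor product $\ot_R$ along the chain of equivalences provided by the preceding theorems, in close analogy with the absolute case just proved. First I would specialize Theorem~\ref{relative-co-contra} to the commutative case $B=A$, $S=R$, taking $D^\bu\rarrow T^\bu$ to be the relative dualizing complex of the statement. This produces mutually inverse equivalences $\Psi:=\boR\Hom_R(T^\bu,{-})\:\sD^\sico_A(R\modl)\rarrow\sD^\sictr_A(R\modl)$ and $\Phi:=T^\bu\ot^\boL_R{-}$ in the other direction. Combined with Theorem~\ref{semiderived-fp-injective-flat}(b), the target $\sD^\sictr_A(R\modl)$ is identified with the quotient of $\Hot(R\modl_{A\dfl})$ by complexes contraacyclic in $A\modl_\fl$. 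Proposition~\ref{cfh-generalized} together with the finite injective dimension hypothesis on fp\+injective $A$\+modules forces $A\modl_\fl$ to have finite homological dimension, so by~\cite[Remark~2.1]{Psemi} contraacyclic coincides with absolutely acyclic on this exact subcategory.

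The semitensor product is then defined by
\[
 M\os_{T^\bu}N := \Phi\bigl(\Psi(M)\ot^\boL_R\Psi(N)\bigr).
\]
Unwinding, if one represents $M$ and $N$ by complexes of $A$\+injective left $R$\+modules $I^\bu$, $J^\bu$ using Theorem~\ref{semiderived-injective-projective}(a), and sets $F^\bu=\Hom_A(D^\bu,I^\bu)$, $G^\bu=\Hom_A(D^\bu,J^\bu)$ (both complexes of $A$\+flat left $R$\+modules by Lemma~\ref{tensor-hom-flat-fp-injective}(b)), one should obtain the explicit formula $M\os_{T^\bu}N\simeq D^\bu\ot_A F^\bu\ot_R G^\bu$, directly parallel to the absolute case. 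Using Lemma~\ref{dualizing-covariant-quasi-isomorphisms}(b) termwise (in the spirit of the proof of Theorem~\ref{derived-co-contra}), the same functor should admit the two asymmetric presentations $M\os_{T^\bu}N\simeq F^\bu\ot_R J^\bu\simeq I^\bu\ot_R G^\bu$.

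The main technical obstacle is verifying that $\ot_R$ descends to a well-defined bifunctor on $\sD^\sictr_A(R\modl_{A\dfl})$. A real subtlety is that the tensor product $F\ot_R G$ of two $A$\+flat left $R$\+modules over $R$ need not itself be $A$\+flat: already the ideal $(2,x)\subset\boZ[x]$ tensored with itself over $\boZ[x]$ acquires $\boZ$\+torsion. The remedy is to pass to $R$\+flat replacements: since $R$ is $A$\+flat, every $R$\+flat $R$\+module is $A$\+flat, the class of $R$\+flat modules is closed under $\ot_R$, and tensoring with an $R$\+flat module is an exact endofunctor of $R\modl$. Any $A$\+flat complex admits a quasi\+isomorphic resolution by $R$\+flat ones via ordinary flat $R$\+resolutions, and tensoring with such a resolution takes complexes absolutely acyclic in $A\modl_\fl$ (finite iterated cones of short exact sequences of $A$\+flat modules) to complexes of the same kind, giving a well-defined derived $\ot^\boL_R$. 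This is the step where I would spend most of the care.

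It remains to identify the unit and to check associativity and commutativity. Applying Lemma~\ref{dualizing-covariant-quasi-isomorphisms}(a) with the flat $A$\+module $F=R$ (so that $D^\bu\ot_A R=T^\bu$) yields $\Psi(T^\bu)\simeq R$ in $\sD^\sictr_A(R\modl)$: the cone of $R\rarrow\Hom_A(D^\bu,J^\bu)$ is a bounded acyclic complex of $A$\+modules, hence absolutely acyclic and \emph{a fortiori} contraacyclic. Since $R$ is the tensor unit for $\ot_R$, its image $T^\bu=\Phi(R)$ serves as the unit object for $\os_{T^\bu}$. Associativity and commutativity are inherited from those of $\ot_R$ via the equivalences $\Psi$ and $\Phi$, completing the construction of the tensor category structure.
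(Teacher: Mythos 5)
Your high-level plan is the same as the paper's: identify $\sD^\sico_A(R\modl)$ with a semi(co)derived category of $A$-flat $R$-modules via Theorems~\ref{relative-co-contra} and~\ref{semiderived-fp-injective-flat}(b), construct a derived $\ot_R^\boL$ there, and transport back; the unit identification via Lemma~\ref{dualizing-covariant-quasi-isomorphisms}(a) with $F=R$ is also the right idea. Where you diverge from the paper, and where the gap lies, is in the construction of $\ot_R^\boL$ itself. The paper introduces \emph{relatively homotopy $R$-flat} complexes and proves (Lemma~\ref{enough-relatively-homotopy-flats}) that the direct-sum totalization $F^\bu$ of the relative bar-complex $\dotsb\to R\ot_AR\ot_AM^\bu\to R\ot_AM^\bu$ is such a complex, with the cone of $F^\bu\to M^\bu$ \emph{contractible} as a complex of $A$-modules; then it invokes \cite[Lemma~2.7]{Psemi} to get a balanced derived functor. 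This construction works for \emph{arbitrary} $A$; the coherence and finiteness hypotheses enter only in the identification with $\sD^\sico_A(R\modl)$.

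Your proposal to replace $M^\bu$ by an ``ordinary flat $R$-resolution'' $P^\bu\to M^\bu$ produces only a quasi-isomorphism, whose cone is an acyclic --- not obviously coacyclic --- complex of $A$-flat $R$-modules, whereas the semicoderived equivalence relation inverts morphisms with cone coacyclic in the exact category $A\modl_\fl$. Under the standing finite-injective-dimension hypothesis one can in fact rescue this step: $A\modl_\fl$ then has finite homological dimension by Proposition~\ref{cfh-generalized}, so any acyclic complex of flat $A$-modules has flat cycles (a dimension-shift over the weak global dimension), is acyclic as a complex over the exact category $A\modl_\fl$, and hence absolutely acyclic by \cite[Remark~2.1]{Psemi}. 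But you never make this argument, and without it the replacement step simply fails. Even granting it, you elide the real work: the class of $R$-flat modules being closed under $\ot_R$ and each $P^n\ot_R{-}$ being exact is not enough; one still has to show the direct-sum totalization $P^\bu\ot_R L^\bu$ of an \emph{unbounded} bicomplex with coacyclic rows is coacyclic, and that the result is independent of the choice of $P^\bu$ --- this is exactly what the paper's relatively-homotopy-flat condition and \cite[Lemma~2.7]{Psemi} are designed to handle, and what your sketch omits. Finally, your explicit formula $M\os_{T^\bu}N\simeq D^\bu\ot_A F^\bu\ot_R G^\bu$ uses the underived $\ot_R$, which is wrong as written (your own $(2,x)\subset\boZ[x]$ example shows the naive tensor product computes the wrong object); the paper's corresponding formula $D^\bu\ot_A(\Hom_A(D^\bu,I^\bu)\ot_R^\boL\Hom_A(D^\bu,J^\bu))$ keeps the inner tensor product derived.
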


\begin{proof}
 To constuct the functor $\os_{T^\bu}$, we will use the results of
Section~\ref{semiderived-categories} in order to identify
the semicoderived category $\sD^\sico_A(R\modl)$ with the quotient
category of homotopy category of complexes of $A$\+flat $R$\+modules
by its thick subcategory of complexes that are \emph{absolutely acyclic
as complexes of flat $A$\+modules}.
 Let us call the latter category simply the \emph{semiderived} or
the \emph{$R/A$\+semiderived category of $A$\+flat $R$\+modules} and
denote it by $\sD^\si_A(R\modl_{A\dfl})$.

 Indeed, by Theorem~\ref{relative-co-contra}, the choice of
the relative dualizing complex $D^\bu\rarrow T^\bu$ for a ring
homomorphism $A\rarrow R$ induces an equivalence between
the $R/A$\+semicoderived category and the $R/A$\+semicontraderived
category of $R$\+modules,
$\sD^\sico_A(R\modl)\simeq\sD^\sictr_A(R\modl)$.
 By Theorem~\ref{semiderived-fp-injective-flat}(b),
the $R/A$\+semicontraderived category of $R$\+modules is
equivalent to the $R/A$\+semicontraderived category of $A$\+flat
$R$\+modules, $\sD^\sictr_A(R\modl)\simeq\sD^\sictr_A(R\modl_{A\dfl})$,
and since the exact category of flat $A$\+modules has finite
homological dimension, the $R/A$\+semicontraderived category
of $A$\+flat $R$\+modules coincides with their $R/A$\+semiderived
category, $\sD^\sictr_A(R\modl_{A\dfl})=\sD^\si_A(R\modl_{A\dfl})$.
{\hbadness=1400\par}

 Composing these equivalences, we obtain an equivalence of
triangulated categories $\sD^\si_A(R\modl_{A\dfl})\simeq
\sD^\sico_A(R\modl)$ provided by the functor assigning to
a complex of $A$\+flat $R$\+modules $G^\bu$ the complex of
$R$\+modules $T^\bu\ot_RG^\bu\simeq D^\bu\ot_AG^\bu$.
 The inverse functor assigns to a complex of $A$\+injective
$R$\+modules $J^\bu$ the complex of $A$\+flat $R$\+modules
$\Hom_R(T^\bu,J^\bu)\simeq\Hom_A(D^\bu,J^\bu)$.

 Furthermore, it will be more convenient for us to work with
the \emph{$R/A$\+semicoderived category of $A$\+flat
$R$\+modules} $\sD^\sico(R\modl_{A\dfl})$, defined as the quotient
category of the homotopy category of $A$\+flat $R$\+modules
by its thick subcategory of complexes that are \emph{coacyclic
as complexes of flat $A$\+modules}.
 Since, in view of Proposition~\ref{cfh-generalized}, the exact
category of flat $A$\+modules has finite projective dimension,
by~\cite[Remark~2.1]{Psemi} the $R/A$\+semiderived category of $A$\+flat
$R$\+modules coincides with their $R/A$\+semicoderived category,
$\sD^\si_A(R\modl_{A\dfl})=\sD^\sico_A(R\modl_{A\dfl})$.

 Let us emphasize that while, by
Theorem~\ref{contraderived-flat-projectives}(a), any complex of
flat $A$\+modules that is contraacyclic as a complex of $A$\+modules
is also contraacyclic as a complex of flat $A$\+modules, there is
\emph{no} claim that a complex of flat $A$\+modules coacyclic as
a complex of $A$\+modules should be coacyclic as a complex of
flat $A$\+modules (cf.~\cite[Chapter~3]{Murf}
and~\cite[Remark~1.5 and Section~2.5]{Psing}).
 Whenever below we mention a ``coacyclic complex of flat $A$\+modules'',
it means a complex coacyclic with respect to the exact category of
flat $A$\+modules.

\begin{prop} \label{left-derived-tensor-products}
 Let $A\rarrow R$ be an arbitrary morphism of commutative rings
such that $R$ is a flat $A$\+module.
 Then one can construct the left derived functor of tensor product of
complexes of $A$\+flat $R$\+modules
$$
 \ot_R^\boL\:\sD^\sico_A(R\modl_{A\dfl})\times
 \sD^\sico_A(R\modl_{A\dfl})\lrarrow\sD^\sico_A(R\modl_{A\dfl})
$$
providing an associative, commutative, and unital tensor category
structure on the $R/A$\+semicoderived category of $A$\+flat
$R$\+modules\/ $\sD^\sico_A(R\modl_{A\dfl})$
with the unit object $R\in\sD^\sico_A(R\modl_{A\dfl})$.

 In addition, one can construct the left derived functor
of tensor product of complexes of $A$\+flat $R$\+modules and
arbitrary complexes of $R$\+modules
$$
 \ot_R^\boL\:\sD^\sico_A(R\modl_{A\dfl})\times
 \sD^\sico_A(R\modl)\lrarrow\sD^\sico_A(R\modl)
$$
providing an associative and unital module category structure over
the tensor category\/ $\sD^\sico_A(R\modl_{A\dfl})$ on
the $R/A$\+semicoderived category of $R$\+modules\/
$\sD^\sico_A(R\modl)$.
\end{prop}

 When $A$ is a coherent ring such that fp\+injective $A$\+modules
have finite injective dimensions and a dualizing complex $D^\bu$ is
chosen for the ring $A$, the equivalence of categories
$\sD^\sico_A(R\modl_{A\dfl})\simeq\sD^\sico_A(R\modl)$ will transform
both of the left derived functors of
Proposition~\ref{left-derived-tensor-products}
into the desired functor of semitensor product~$\os_{T^\bu}$.

\begin{proof}[Proof of Proposition~\ref{left-derived-tensor-products}]
 In order to obtain the derived functors $\ot_R^\boL$, we will
apply the general construction of balanced derived functors of
functors of two arguments elaborated in~\cite[Lemma~2.7]{Psemi}.
 Let us call a complex of $A$\+flat $R$\+modules $F^\bu$
\emph{relatively homotopy $R$\+flat} if for any complex of $A$\+flat
$R$\+modules $L^\bu$ that is coacyclic as a complex of flat $A$\+modules
the complex of $R$\+modules $F^\bu\ot_RL^\bu$ is coacyclic as a complex
of flat $A$\+modules, and for any complex of $R$\+modules $N^\bu$ that
is coacyclic as a complex of $A$\+modules the complex of $R$\+modules
$F^\bu\ot_RN^\bu$ is coacyclic as a complex of $A$\+modules.

 Similarly, let us call a complex of $R$\+modules $H^\bu$
\emph{homotopy $R/A$\+flat} if for any complex of $A$\+flat
$R$\+modules $M^\bu$ that is coacyclic as a complex of flat
$A$\+modules the complex of $R$\+modules $M^\bu\ot_RH^\bu$
is coacyclic as a complex of $A$\+modules.
 Both the relatively homotopy $R$\+flat complexes of $A$\+flat
$R$\+modules and the homotopy $R/A$\+flat complexes of $R$\+modules
are thought of as ``homotopy flat in the direction of $R$ relative
to~$A$'' (while the former are supposed to be also complexes of
flat $A$\+modules, and the latter ones are not).

\begin{lem} \label{enough-relatively-homotopy-flats}
 Let $A\rarrow R$ be a morphism of commutative rings making $R$
a flat $A$\+module.  Then \par
\textup{(a)} for any complex of $A$\+flat $R$\+modules $M^\bu$
there exists a relatively homotopy $R$\+flat complex of $A$\+flat
$R$\+modules $F^\bu$ together with a morphism of complexes of
$R$\+modules $F^\bu\rarrow M^\bu$ whose cone is coacyclic
as a complex of flat $A$\+modules; \par
\textup{(b)} for any complex of $R$\+modules $N^\bu$ there exists
a homotopy $R/A$\+flat complex of $R$\+modules $H^\bu$ together
with a morphism of complexes of $R$\+modules $H^\bu\rarrow N^\bu$
whose cone is coacyclic as a complex of $A$\+modules.
\end{lem}

\begin{proof}
 Part~(a): notice that the full subcategory of relatively homotopy
$R$\+flat complexes of $A$\+flat $R$\+modules is closed under
the operations of shift, cone, and the passage to an infinite direct
sum (and consequently, also to the countable homotopy direct limit)
in the homotopy category of complexes of $A$\+flat $R$\+modules
$\Hot(R\modl_{A\dfl})$.
 Besides, any complex of $R$\+modules $R\ot_AE^\bu$ induced from
a complex of flat $A$\+modules $E^\bu$ is a relatively homotopy
$R$\+flat complex of $A$\+flat $R$\+modules.
 Indeed, the tensor product of a complex of flat $A$\+modules with
a coacyclic complex of flat $A$\+modules is a coacyclic complex of
flat $A$\+modules, and the tensor product of a complex of flat
$A$\+modules with a coacyclic complex of $A$\+modules is a coacyclic
complex of $A$\+modules.

 Now, given a complex of $A$\+flat $R$\+modules $M^\bu$, consider
the bar-complex
$$
 \dotsb\lrarrow R\ot_AR\ot_AR\ot_AM^\bu\lrarrow R\ot_AR\ot_AM^\bu
 \lrarrow R\ot_AM^\bu.
$$
 Let $F^\bu$ be the total complex of this bicomplex obtained by
taking infinite direct sums along the diagonals.
 On the one hand, the complex $F^\bu$ is homotopy equivalent to
the homotopy direct limit of the total complexes of the finite
segments (subcomplexes of silly filtration) of the bar-bicomplex,
which are obtained from complexes of $R$\+modules induced from
complexes of flat $A$\+modules by finite iterations of
the operations of shift and cone.
 So the complex $F^\bu$ is a relatively homotopy $R$\+flat complex
of $A$\+flat $R$\+modules.
 On the other hand, the cone of the natural morphism $F^\bu
\rarrow M^\bu$ is not only coacyclic, but even contractible
as a complex of (flat) $A$\+modules.

 Part~(b): the full subcategory of homotopy $R/A$\+flat complexes
of $R$\+modules is closed under the operations of shift, cone, and
the passage to an infinite direct sum in the homotopy category of
complexes of $R$\+modules $\Hot(R\modl)$.
 Besides, any complex of $R$\+modules $R\ot_A C^\bu$ induced from
a complex of $A$\+modules $C^\bu$ is homotopy $R/A$\+flat.
 Indeed, the tensor product of a coacyclic complex of flat
$A$\+modules with any complex of $A$\+modules is a coacyclic
complex of $A$\+modules.

 Now, given a complex of $R$\+modules $N^\bu$, consider
the bar-complex
$$
 \dotsb\lrarrow R\ot_AR\ot_AR\ot_AN^\bu\lrarrow R\ot_AR\ot_AN^\bu
 \lrarrow R\ot_AN^\bu.
$$
 Let $H^\bu$ be the total complex of this bicomplex obtained by
taking infinite direct sums along the diagonals.
 On the one hand, the complex $H^\bu$ is homotopy equivalent to
the homotopy direct limit of the subcomplexes of silly filtration
on the bar-bicomplex, which are obtained from complexes of
$R$\+modules induced from complexes of $A$\+modules by finite
iterations of the operations of shift and cone.
 So $H^\bu$ is a homotopy $R/A$\+flat complex of $R$\+modules.
 On the other hand, the cone of the natural morphism $H^\bu\rarrow
N^\bu$ is a contractible complex of $A$\+modules.
 (Cf.~\cite[Theorem~2.6]{Psemi}.)
\end{proof}

 Now we can finish the proof of Proposition~\ref{left-derived-tensor-products}.
 Given two complexes of $A$\+flat $R$\+modules $M^\bu$ and
$N^\bu$, one replaces one or both of them with relatively homotopy
$R$\+flat complexes of $A$\+flat $R$\+modules $F^\bu$ and/or $G^\bu$
endowed with morphisms of complexes of $R$\+modules $F^\bu\rarrow M^\bu$
and $G^\bu\rarrow N^\bu$ with the cones coacyclic as complexes of flat
$A$\+modules.
 The induced morphisms of complexes of $A$\+flat $R$\+modules
$F^\bu\ot_R G^\bu\rarrow F^\bu\ot_R N^\bu$ and $F^\bu\ot_R G^\bu\rarrow
M^\bu\ot_R G^\bu$ have cones coacyclic with respect to the exact
category of flat $A$\+modules, so either of the three complexes
$F^\bu\ot_R N^\bu$, \ $M^\bu\ot_R G^\bu$, or $F^\bu\ot_R G^\bu$ can be
taken as representing the object $M^\bu\ot_R^\boL N^\bu$ in
the semicoderived category $\sD^\sico_A(R\modl_{A\dfl})$.

 Similarly, given a complex of $A$\+flat $R$\+modules $M^\bu$ and
an arbitrary complex of $R$\+modules $N^\bu$, one either replaces
$M^\bu$ with a relatively homotopy $R$\+flat complex of $A$\+flat
$R$\+modules $F^\bu$ endowed with a morphism of complexes of
$R$\+modules $F^\bu\rarrow M^\bu$ with the cone coacyclic as
a complex of flat $A$\+modules, or replaces $N^\bu$ with a homotopy
$R/A$\+flat complex of $R$\+modules $H^\bu$ endowed with a morphism
of complexes of $R$\+modules $H^\bu\rarrow N^\bu$ with the cone
coacyclic as a complex of $A$\+modules.
 The induced morphisms of complexes of $R$\+modules 
$F^\bu\ot_R H^\bu\rarrow F^\bu\ot_R N^\bu$ and $F^\bu\ot_R H^\bu\rarrow
M^\bu\ot_R H^\bu$ have cones coacyclic over~$A$, so any one of
the three complexes $F^\bu\ot_R N^\bu$, \ $M^\bu\ot_R H^\bu$, or
$F^\bu\ot_R H^\bu$ can be taken as representing the object
$M^\bu\ot_R^\boL N^\bu$ in the semicoderived category
$\sD^\sico_A(R\modl)$.

 The left derived functors $\ot_R^\boL$ are well-defined by this
construction according to~\cite[Lemma~2.7]{Psemi}, whose conditions
are satisfied by Lemma~\ref{enough-relatively-homotopy-flats}
and~\cite[Lemma~1.6]{Pkoszul}.
 To sum up this somewhat tedious construction, one can simply say
that both the derived functors $M^\bu\ot_R^\boL N^\bu$ are computed
by the total complex of the bar-tricomplex
$$
 \dotsb\lrarrow M^\bu\ot_AR\ot_AR\ot_AN^\bu\lrarrow M^\bu\ot_AR\ot_AN^\bu
 \lrarrow M^\bu\ot_AN^\bu
$$
constructed by taking infinite direct sums along the diagonal
hyperplanes.
\end{proof}

 Finally, we can return to the situation with a coherent commutative ring
$A$ such that fp\+injective $A$\+modules have finite injective
dimensions and a dualizing complex of $A$\+modules $D^\bu$ is chosen.
 As above, let $R$ be a flat commutative $A$\+algebra and
$T^\bu=R\ot_AD^\bu$ be the corresponding relative dualizing complex.
 Given two complexes of $R$\+modules $M^\bu$ and $N^\bu$, one replaces
one or both of them with complexes of $A$\+injective $R$\+modules
$I^\bu$ and/or $J^\bu$ endowed with morphisms of complexes of
$R$\+modules $M^\bu\rarrow I^\bu$ and $N^\bu\rarrow J^\bu$ with
the cones coacyclic as complexes of $A$\+modules.
 Then the object of the semicoderived category $M^\bu\os_{T^\bu}N^\bu$
can be computed in either of three equivalent ways as
\begin{gather*}
 D^\bu\ot_A(\Hom_A(D^\bu,I^\bu)\ot_R^\boL\Hom_A(D^\bu,J^\bu)) \\
 \simeq \Hom_A(D^\bu,I^\bu)\ot_R^\boL N^\bu\simeq
 \Hom_A(D^\bu,J^\bu)\ot_R^\boL M^\bu,
\end{gather*}
where the isomorphism signs mean natural isomorphisms in
the semicoderived category $\sD^\sico_A(R\modl)$.
 As the equivalence of categories $\sD^\sico_A(R\modl_{A\dfl})\simeq
\sD^\sico_A(R\modl)$ takes the one-term complex
$R\in\sD^\sico_A(R\modl_{A\dfl})$ to the relative dualizing complex
$T^\bu\in\sD^\sico_A(R\modl)$, the relative dualizing complex $T^\bu$
is the unit object of the tensor category structure $\os_{T^\bu}$
on the triangulated category $\sD^\sico_A(R\modl)$.
\end{proof}

\bigskip

\end{document}